\tikzstyle{V}=[draw, fill =black, circle, inner sep=0pt, minimum size=1.5pt]
\newcommand\TikZ[1]{\begin{matrix}\begin{tikzpicture}#1\end{tikzpicture}\end{matrix}}
\newcounter{r}
\newcommand\Part[1]{
        \setcounter{r}{1}
	 \foreach \x in {#1}{
 	{\ifnum\value{r}=1
		\draw (0,\value{r}-1)--(\x,\value{r}-1); 
		\fi}
	\draw (0,\value{r}) to (\x,\value{r});
   	\foreach \y in {0, ..., \x} {\draw (\y,\value{r})--(\y,\value{r}-1);}
	\addtocounter{r}{1}
 }}
\theoremstyle{plain}
	\newtheorem{thm}{Theorem}[section]
	\newtheorem{lemma}[thm]{Lemma}
	\newtheorem{prop}[thm]{Proposition}
	\newtheorem{cor}[thm]{Corollary}
\theoremstyle{definition}
	\newtheorem{defn}[thm]{Definition}
    \newtheorem{notn}[thm]{Notation}
	\newtheorem*{note}{Note}
    \newtheorem*{akn}{Acknowledgement}
	\newtheorem{example}{Example}
\def\NR{Notation\ }
\definecolor{dred}{rgb}{.65, 0, 0.15}
\newcommand{\splt}[3]{[\operatorname{split}_{{#1}}({#2})]_{{#3}}}
\newcommand{\spl}[3]{\operatorname{split}_{{#1}}({#2})}
\def\cA{\mathcal{A}}\def\cO{\mathcal{O}}\def\cR{\mathcal{R}}\def\cS{\mathcal{S}}
  \def\CC{\mathbb{C}}                       \def\ZZ{\mathbb{Z}}
\def\cleq{\preccurlyeq}
\def\cgeq{\succcurlyeq}
\DeclareMathOperator{\Sym}{Sym}
\DeclareMathOperator{\QS}{QSym}
\DeclareMathOperator{\NS}{NSym}
\DeclareMathOperator{\set}{Set}
\DeclareMathOperator{\comp}{comp}
\DeclareMathOperator{\lp}{lp}
\DeclareMathOperator{\spab}{sp}
\DeclareMathOperator{\OSP}{OSP}
\DeclareMathOperator{\word}{\pi}
\def\h{\boldsymbol{h}}
\def\e{\boldsymbol{e}}
\def\y{\boldsymbol{y}}
\def\p{\boldsymbol{\Psi}}
\def\ptwo{\boldsymbol{\Phi}}
\def\r{\boldsymbol{r}}
\def\cO{\mathcal{O}}
\def\<{\langle}
\def\>{\rangle}
\def\C{\mathbb{C}}
\def\Cons{\mathrm{Cons}} 
\def\Sh{\mathrm{Sh}} 
\def\Br{\mathrm{Br}}
\def\cS{A}
\title{Quasisymmetric Power Sums}
\author{Cristina Ballantine, Zajj Daugherty, Angela Hicks, Sarah Mason, Elizabeth Niese}
\date{\today}                                           
\DeclareSymbolFont{bbold}{U}{bbold}{m}{n}
\DeclareSymbolFontAlphabet{\mathbbold}{bbold}
\def\one{\mathbbold{1}}
\begin{document}
\begin{abstract} In the 1995 paper entitled ``Noncommutative symmetric functions," Gelfand, et.\ al.\ defined two noncommutative symmetric function analogues for the power sum basis of the symmetric functions, along with analogues for the elementary and the homogeneous bases.  They did not consider the noncommutative symmetric power sum duals in the quasisymmetric functions, which have since been explored only in passing by Derksen and Malvenuto-Reutenauer.  These two distinct quasisymmetric power sum bases are the topic of this paper.  In contrast to the simplicity of the symmetric power sums, or the other well known bases of the quasisymmetric functions, the quasisymmetric power sums have a more complex combinatorial description.  As a result, although symmetric function proofs often translate directly to quasisymmetric analogues, this is not the case for quasisymmetric power sums. Neither is there a model for working with the quasisymmetric power sums in  the work of Gelfand, et.\ al., which relies heavily on quasi-determinants (which can only be exploited by duality for our purposes) and is not particularly combinatorial in nature. This paper therefore offers a first glimpse at working with these two relatively unstudied quasisymmetric bases, avoiding duality where possible to encourage a previously unexplored combinatorial understanding. \end{abstract}
\maketitle
\setcounter{tocdepth}{1}
\tableofcontents

\section{Introduction}

The ring of symmetric functions $\Sym$ has several well-studied bases indexed by integer partitions  $\lambda$, such as the monomial basis $m_\lambda$, the elementary  basis $e_\lambda$, the complete homogeneous basis $h_\lambda$, the Schur functions $s_\lambda$, and, most relevant here, the power sum basis $p_\lambda$. Two important generalizations of $\Sym$ are $\QS$ (the ring of quasisymmetric functions) and $\NS$ (the ring of noncommutative symmetric functions). These rings share dual Hopf algebra structures, giving a rich interconnected theory with many beautiful algebraic and combinatorial results.  In particular, many quasisymmetric and noncommutative symmetric analogues  to the familiar symmetric bases  have been defined and studied, such as the quasisymmetric monomial basis  $M_\alpha$, and the noncommutative elementary and homogeneous bases $\e_\alpha$ and  $\h_\alpha$ \cite{GKLLRT94} (where the indexing set is compositions $\alpha$). Several different analogues of the Schur functions  have also been defined, including the quasisymmetric fundamental basis $F_\alpha$ \cite{gessel1984multipartite}, dual to the noncommutative ribbon basis $\r_\alpha$; the quasi-Schur basis and its dual in \cite{haglund2011quasisymmetric}; and the immaculate basis and its quasisymmetric dual~\cite{BBSSZ14lift}.  

Quasisymmetric analogues of symmetric function bases are useful for a number of reasons.  Quasisymmetric functions form a combinatorial Hopf algebra~\cite{Ehr96,gessel1984multipartite,MalReu95} and in fact are the terminal object in the category of combinatorial Hopf algebras~\cite{ABS06}, which explains why they consistently appear throughout algebraic combinatorics.  Complicated combinatorial objects often have simpler formulas when expanded into quasisymmetric functions, and translating from symmetric to quasisymmetric functions can provide new avenues for proofs.

Here, we explore the analogs to the power sum bases. In $\Sym$, there is an important bilinear pairing, the Hall inner product, defined by $\<m_\lambda, h_\mu\> = \delta_{\lambda, \mu}$. Moreover, the duality between $\QS$ and $\NS$ precisely generalizes the inner product on $\Sym$ so that, for example,  $\<M_\lambda, \h_\mu\> = \delta_{\lambda, \mu}$. With respect to the pairing on $\Sym$, the power sum basis is (up to a constant) self-dual, so analogs to the power sum basis in $\QS$ and $\NS$ should share a similar relationship. Two types of noncommutative power sum bases, $\p_\alpha$ and $\ptwo_\alpha$, were defined by Gelfand, et.\ al.\ \cite{GKLLRT94}. Briefly, the quasisymmetric duals to one type or the other were also discussed in \cite{der09} and in \cite{MalReu95}; but  in contrast to the other bases listed above, very little has been said about their structure or their relationship to other bases. The main objective of this paper is to fill this gap in the literature.  Namely, we define two types of quasisymmetric power sum bases, which are scaled duals to $\p_\alpha$ and $\ptwo_\alpha$. The scalars are chosen analogous to the scaled self-duality of the symmetric power sums; moreover, we show that these are exactly the right coefficients to force our bases to refine the symmetric power sums (Theorems~\ref{thm:refine} and~\ref{thm:2refine}). Section~\ref{sec:qsps} develops combinatorial proofs of these refinements. In Section~\ref{sec:btw}, we give transition matrices to other well-understood bases. Section~\ref{sec:products} explores algebraic properties, giving explicit formulas for products of quasisymmetric power sums. Section~\ref{sec:plethysm} gives formulas for plethysm in the quasisymmetric case.

\section{Preliminaries}\label{sec:prelim}
In this section, we define the rings $\QS$ of quasisymmetric functions and $\NS$ of noncommutative symmetric functions, and briefly discuss their dual Hopf algebra structures. 

We begin with a brief discussion of notation.  Due to the nature of this paper, we note that there in a lot of notation to keep track of throughout, and therefore we set aside numbered definitions and notations to help the reader. In general, we use lower case letters (e.g.\ $e, m, h, s$, and $p$) to indicate {\em symmetric functions}, bold lowercase letters (e.g.\ $\e$, $\h$, and $\r$) to indicate {\em noncommutative symmetric functions}, and capital letters (e.g.\ $M$ and $F$) to indicate {\em quasisymmetric functions}.  When there is a single clear analogue of a symmetric function basis, we use the same letter for the symmetric functions and their analogue (following \cite{LMvW} rather than \cite{GKLLRT94}).  For the two different analogs to the power sums, we echo \cite{GKLLRT94} in using $\p$ and $\ptwo$ for the noncommutative symmetric power sums, and then $\Psi$ and $\Phi$ as quasisymmetric analogues. We generally follow \cite{LMvW} for the names of the automorphisms on the quasisymmetric and noncommutative symmetric functions.  For example, we use $S$ for the antipode map (in particular, see \cite[\S 3.6]{LMvW} for a complete list and a translation to other authors).

\subsection{Quasisymmetric functions}\label{sec:qsym}
A formal power series $f \in \C\llbracket x_1,x_2,\ldots \rrbracket$ is a {\em quasisymmetric function} if the coefficient of $x_1^{a_1}x_2^{a_2}\cdots x_k^{a_k}$ in $f$ is the same as the coefficient for $x_{i_1}^{a_1}x_{i_2}^{a_2}\cdots x_{i_k}^{a_k}$ for any $i_1<i_2<\cdots <i_k$. The set of quasisymmetric functions $\QS$ forms a ring. Moreover, this ring has a $\ZZ_{\geq 0}$-grading by degree, so that $\QS=\bigoplus_n\QS_n$, where $\QS_n$ is the set of $f \in \QS$ that are homogeneous of degree $n$. For a comprehensive discussion of $\QS$ see \cite{LMvW,MalReu95,Sta99v2}.  

There are a number of common bases for $\QS_n$ as a vector space over $\C$.  These bases are indexed by (strong) integer compositions.  
\begin{defn}[composition, $\alpha\vDash n$]
A sequence $\alpha=(\alpha_1,\alpha_2,\ldots,\alpha_k)$ is a \emph{composition} of $n$, denoted $\alpha\vDash n$, if $\alpha_i>0$ for each $i$ and $\sum_i \alpha_i=n$. \end{defn} 

\begin{notn}[$|\alpha|$, $l(\alpha)$, $\widetilde{\alpha}$]The {\em size} of a composition $\alpha=(\alpha_1,\alpha_2,\ldots,\alpha_k)$ is $|\alpha|=\sum \alpha_i$ and the {\em length} of $\alpha$ is $\ell(\alpha)=k$.  Given a composition $\alpha$, we denote by  $\widetilde{\alpha}$  the partition obtained by placing the parts of $\alpha$ in weakly decreasing order.
\end{notn}

\begin{defn}[refinement, $\beta\cleq\alpha$, $\beta^{(j)}$]\label{defn:refinement}
If $\alpha$ and $\beta$ are both compositions of $n$, we say that $\beta$ {\em refines} $\alpha$ (equivalently, $\alpha$ is a {\em coarsening} of $\beta$), denoted $\beta\cleq \alpha$, if $$\alpha=(\beta_1+\cdots+\beta_{i_1}, \beta_{i_1+1}+\cdots +\beta_{i_1+i_2}, \ldots, \beta_{i_1+\cdots +i_{k-1}+1}+\cdots + \beta_{i_1+\cdots+i_k}).$$  
We will denote by $\beta^{(j)}$ the composition made up of the parts of $\beta$ (in order) that sum to $\alpha_{j}$; namely, if $j=i_s$, then $\beta^{(j)}=(\beta_{i_1+\cdots +i_{s-1}+1},\cdots , \beta_{i_1+\cdots+i_s})$.
\end{defn}
It is worth noting that some authors reverse the inequality, using $\cleq$ for coarsening as opposed to refinement as we do here.  Repeatedly we will need the particular parts of $\beta$ that sum to a particular part of $\alpha$.

\begin{notn}[$\set(\alpha)$, $\comp(A)$] There is a natural bijection between compositions of $n$ and subsets of $[n-1]$ given by partial sums. (Here $[n]$ is the set $\{1,2, \hdots , n\}$.)  Namely, if $\alpha=(\alpha_1,\ldots,\alpha_k)\vDash n$, then $\set(\alpha) = \{\alpha_1, \alpha_1+\alpha_2, \ldots, \alpha_1+\cdots+\alpha_{k-1}\}.$  Similarly, if $A=\{a_1,\ldots,a_j\}\subseteq[n-1]$ with $a_1<a_2<\cdots<a_j$ then $\comp(A)=(a_1,a_2-a_1,\ldots, a_j-a_{j-1},n-a_j)$.
\end{notn} 
We remark that  $\alpha \cleq \beta$ if and only if $\set(\beta)\subseteq \set(\alpha)$.

Let $\alpha=(\alpha_1,\ldots,\alpha_k)$ be a composition. The {\em quasisymmetric monomial function} indexed by $\alpha$ is 
\begin{equation}\label{eq:Ms}M_\alpha=\sum_{i_1<i_2<\cdots<i_k} x_{i_1}^{\alpha_1}x_{i_2}^{\alpha_2}\cdots x_{i_k}^{\alpha_k};\end{equation}
and the {\em fundamental quasisymmetric function} indexed by $\alpha$ is 
\begin{equation} \label{M-F} F_\alpha = \sum_{\beta\cleq \alpha}M_\beta, \qquad \text{so that} \qquad M_\alpha = \sum_{\beta\cleq\alpha}(-1)^{\ell(\beta)-\ell(\alpha)}F_\beta.\end{equation}
Equivalently, $F_{\alpha}$ is defined directly by
\begin{equation}\label{eq:Fs}
	F_{\alpha} = \sum_{\substack{i_1\leq i_2\leq \cdots \leq i_n\\  i_j<i_{j+1} \text{ if } j \in \set(\alpha)}} x_{i_1}x_{i_2}\cdots x_{i_n}.\end{equation}

In addition to being a graded ring, $\QS$ can be endowed with the structure of a \emph{combinatorial Hopf algebra}.  For our purposes, this means that $\QS$ has a product (ordinary polynomial multiplication), a coproduct $\Delta$, a unit and counit, and an antipode map.  The ring $\NS$ of noncommutative symmetric functions is dual to $\QS$ with respect to a certain inner product (defined later), and thus also is a combinatorial Hopf algebra. For further details on the Hopf algebra structure of $\QS$ and $\NS$, see~\cite{ABS06,grinberg2014hopf,LMvW}.

\subsection{Noncommutative symmetric functions}\label{sec:nsym}  The ring of {\em noncommutative symmetric functions}, denoted $\NS$, is formally defined as a free associative algebra $\C \langle \e_1, \e_2, \hdots \rangle$, where the $\e_i$ are regarded as {\em noncommutative elementary functions} and 
	\[
		\e_\alpha = \e_{\alpha_1}\e_{\alpha_2}\cdots \e_{\alpha_k}, \qquad 
		\text{for a composition } \alpha.\]  
Define the {\em noncommutative complete homogeneous symmetric functions} as in~\cite[\S 4.1]{GKLLRT94} by 
\begin{equation}\label{eq:hine}
\h_n = \sum_{\alpha \vDash n} (-1)^{n-\ell(\alpha)}\e_\alpha, \quad \text{ and } \quad 
\h_\alpha = \h_{\alpha_1}\cdots \h_{\alpha_k} = \sum_{\beta\cleq \alpha}(-1)^{|\alpha|-\ell(\beta)}\e_\beta.\end{equation}
The noncommutative symmetric analogue (dual) to the fundamental quasisymmetric functions is given by the {\em ribbon Schur functions} 
\begin{equation}\label{eq:rinh}
\r_\alpha = \sum_{\beta\cgeq \alpha} (-1)^{\ell(\alpha)-\ell(\beta)}\h_\beta.\end{equation}
\subsubsection{Noncommutative power sums}
To define the noncommutative power sums, we  begin by recalling the useful exposition in~\cite[\S 2]{GKLLRT94} on the (commuting) symmetric power sums.  Namely, the power sums $p_n$ can be defined by the generating function:
$$P(X;t)=\sum_{k\geq 1}t^{k-1}p_k[X]=\sum_{i\geq 1}x_i(1-x_it)^{-1}.$$

This generating function can equivalently be defined by any of the following generating functions, where $H(X;t)$ is the standard generating function for the complete homogeneous functions and $E(X;t)$ is the standard generating function for the elementary homogeneous functions:
\begin{equation}
P(X;t)=\frac{d}{dt}\log H(X;t) = -\frac{d}{dt}\log E(-X;t).\label{eq:pfrome}
\end{equation}
Unfortunately, there is not a unique sense of logarithmic differentiation for power series (in $t$) with noncommuting coefficients (in $\NS$). Two natural well-defined reformulations of these are 
\begin{equation}\label{eq:type1gen}
\frac{d}{dt}H(X;t)=H(X;t)P(X;t) \quad \text{ or } \quad
-\frac{d}{dt}E(X;-t)=P(X;t)E(X;-t),
\end{equation}
and 
\begin{equation}
H(X;t)=-E(X;-t) = \exp\left(\int P(X;t) dt\right). \label{eq:type2gen}
\end{equation}
In $\NS$, these do indeed give rise to \emph{two different analogs} to the power sum basis, introduced in~\cite[\S 3]{GKLLRT94}: the \emph{noncommutative power sums of the first kind} (or \emph{type}) $\p_\alpha$ and of the \emph{second kind} (or \emph{type}) $\ptwo_\alpha$, with explicit formulas (due to \cite[\S 4]{GKLLRT94}) as follows.

The noncommutative power sums of the first kind are those satisfying essentially the same generating function relation as \eqref{eq:type1gen}, where this time $H(X;t)$, $E(X;t)$, and $P(X;t)$ are taken to be the generating functions for the noncommutative homogeneous, elementary, and type one power sums respectively, and expand as
\begin{equation}\label{eq:powerinh}
\p_n = \sum_{\beta \vDash n} (-1)^{\ell(\beta)-1} \beta_k \h_\beta 
\end{equation}
where $\beta=(\beta_1,\ldots,\beta_k)$. 
\begin{notn}[$\lp(\beta,\alpha)$]Given a composition $\alpha =(\alpha_1, \ldots, \alpha_m)$ and a composition $\beta = (\beta_1,\ldots, \beta_k)$ which refines $\alpha$, we let $\lp(\beta) = \beta_k$ (last part) and  $$ \lp(\beta,\alpha) = \prod_{i=1}^{\ell(\alpha)} \lp(\beta^{(i)}).$$ \end{notn}  
Then
\begin{equation}\label{eq:htopsi}\p_\alpha =\p_{\alpha_1}\cdots\p_{\alpha_m}=\sum_{\beta \cleq \alpha} (-1)^{\ell(\beta)-\ell(\alpha)}\lp(\beta,\alpha)\h_\beta.
\end{equation}
Similarly, the noncommutative power sums of the second kind are those satisfying the analogous generating function relation to \eqref{eq:type2gen}, and expand as
\begin{equation}\label{eq:htophi}
\ptwo_n = \sum_{\alpha\vDash n}(-1)^{\ell(\alpha)-1}\frac{n}{\ell(\alpha)}\h_\alpha, \quad  \text{and} \quad  \ptwo_\alpha = \sum_{\beta\cleq\alpha}(-1)^{\ell(\beta)-\ell(\alpha)}\frac{\prod_i \alpha_i}{\ell(\beta,\alpha)}\h_\beta,
\end{equation} 
where $\ell(\beta,\alpha)=\prod_{j=1}^{\ell(\alpha)} \ell(\beta^{(j)})$. 

\subsection{Dual bases}\label{sec:dual}Let $V$ be a vector space over $\C$, and let $V^* = \{ \text{linear } \varphi: V \to \CC\}$ be its dual. Let $\<,\>: V\otimes V^* \rightarrow \C$ be the natural bilinear pairing. Bases of these vector spaces are indexed by a common set, say $I$; and we say bases $\{b_\alpha\}_{\alpha \in I}$ of $V$ and $\{b_\alpha^*\}_{\alpha \in I}$ of $B^*$ are \emph{dual} if $\<b_\alpha,b^*_\beta\>=\delta_{\alpha,\beta}$ for all $\alpha, \beta \in I$.  

Due to the duality between $\QS$ and $\NS$, we make extensive use of the well-known relationships between change of bases in a vector space an its dual. Namely, if $(A,A^*)$ and $(B,B^*)$ are two pairs of dual bases of $V$ and $V^*$, then 
for $a_{\alpha}\in A$ and $b_{\beta}^* \in B^*$, we have 
\[a_\alpha = \sum_{b_\beta \in B}c_\beta^\alpha b_\beta\qquad \text{ if and only if }\qquad b^*_\beta = \sum_{a^*_\alpha \in A^*}c_\beta^\alpha a^*_\alpha.\]

\noindent In particular, the bases $\{M_\alpha\}$ of $\QS$ and $\{\h_\alpha\}$  of $\NS$ are dual; as are  $\{F_\alpha\}$ and $\{\r_\alpha\}$ (see \cite[\S 6]{GKLLRT94}). The primary object of this paper is to explore properties of two $\QS$ bases dual to $\{\p_\alpha\}$  or $\{ \ptwo_\alpha \}$ (up to scalars) that also refine $p_\lambda$.  Malvenuto and Reutenauer~\cite{MalReu95} mention (a rescaled version of) the type 1 version but do not explore its properties; Derksen \cite{der09} describes such a basis for the type 2 version, but a computational error leads to an incorrect formula in terms of the monomial quasisymmetric function expansion. 

 \section{Quasisymmetric power sum bases}\label{sec:qsps}
The symmetric power sums have the property that $\<p_\lambda,p_\mu\> = z_\lambda \delta_{\lambda, \mu}$ where $z_\lambda$ is as follows. 
\begin{notn}[$z_\alpha$]\label{notn:z} 
	For a  partition $\lambda \vdash n$, let $m_i$ be the number of parts of length $i$. Then 
$$z_\lambda = 1^{m_1}m_1!2^{m_2}m_2!\cdots k^{m_k}m_k!.$$ 
Namely, $z_\lambda $ is the size of the stabilizer of a permutation of cycle type $\lambda$ under the action of $S_n$ on itself by conjugation. For a composition $\alpha$, we use $z_{\alpha}=z_{\widetilde{\alpha}}$, where $\widetilde{\alpha}$ is the partition rearrangement of $\alpha$ as above.
\end{notn} 
 We describe two quasisymmetric analogues of the power sums, each of which satisfies a variant of this duality property.

\subsection{Type 1 quasisymmetric power sums}
We define the type 1 quasisymmetric power sums to be the basis $\Psi_\alpha$ of $\QS$ such that 
$$\<\Psi_\alpha,\p_\beta\> = z_\alpha \delta_{\alpha,\beta}.$$ 
While duality makes most of this definition obvious, the scaling is somehow a free choice to be made. However, as we show in Theorem \ref{thm:refine} and Corollary \ref{cor:refine}, our choice of scalar not only generalizes the self-dual relationship of the symmetric power sums, but serves to provide a refinement of those power sums. Moreover, the proof leads to a (best possible) combinatorial interpretation of $\Psi_\alpha$.

In \cite[\S 4.5]{GKLLRT94}, the authors  give both the transition matrix from the $\h$ basis to the $\Psi$ basis (above in (\ref{eq:powerinh})), and is inverse. Using the latter and duality, we   compute a monomial expansion of $\Psi_{\alpha}$.

\begin{notn}[$\pi(\alpha,\beta)$]
First, given $\alpha$ a refinement of $\beta$, recall from Definition \ref{defn:refinement} that $\alpha^{(i)}$ is the composition consisting of the parts of $\alpha$ that combine (in order) to $\beta_i$. Define 
	$$\pi(\alpha)=\prod_{i=1}^{\ell(\alpha)} \sum_{j=1}^i \alpha_j
		\qquad\text{and} \qquad 
		\pi(\alpha,\beta)=\prod_{i=1}^{\ell(\beta)} \pi(\alpha^{(i)}).$$ 
\end{notn}

Then
\[\h_\alpha = \sum_{\beta \preccurlyeq \alpha} \frac{1}{\pi(\beta,\alpha)}\p_\beta.\] 
By duality, the polynomial 
\[\psi_\alpha = \sum_{\beta\cgeq \alpha}\frac{1}{\pi(\alpha,\beta)}M_\beta\]
has the property that $\<\psi_\alpha,\p_\beta\>=\delta_{\alpha,\beta}$.  Then the type 1 quasisymmetric power sums have the following monomial expansion:
\begin{equation}\label{eq:PsiM}\Psi_\alpha = z_\alpha \psi_\alpha=z_\alpha\sum_{\beta\cgeq\alpha}\frac{1}{\pi(\alpha,\beta)}M_\beta.\end{equation}

For example \begin{align*}
\Psi_{232} &= (2^2 \cdot 2! \cdot 3)(\frac{1}{2 \cdot 3 \cdot 2} M_{232}+\frac{1}{2 \cdot 5 \cdot 2} M_{52} + \frac{1}{2 \cdot 3 \cdot 5} M_{25} + \frac{1}{2 \cdot 5 \cdot 7} M_7) \\
&= 2 M_{232} +\frac{6}{5} M_{52} + \frac{4}{5} M_{25}+\frac{12}{35} M_7.
\end{align*}

The remainder of this section is devoted to constructing  the ``best possible'' combinatorial formulation of the $\Psi_\alpha$, given in Theorem \ref{thm:combPsi}, followed by the proof of the refinement of the symmetric power sums, given in Theorem \ref{thm:refine} and Corollary \ref{cor:refine}. 

\bigskip

\subsubsection{A combinatorial interpretation of $\Psi_\alpha$}
We consider the set $S_n$ of permutations of $[n]=\{1,2,\dots,n\}$ both in one-line notation and in cycle notation. For a partition $\lambda=(\lambda_1, \lambda_2, \ldots, \lambda_{\ell})$ of $n$, a permutation $\sigma$ has \emph{cycle type} $\lambda$ if its cycles are of lengths $\lambda_1$, $\lambda_2$, \dots, $\lambda_\ell$.  We consider two canonical forms for writing a permutation according to its cycle type.

\begin{defn}[standard  and partition forms]\label{def:standard-partition}
 A permutation in cycle notation is said to be in \emph{standard form} if each cycle is written with the largest element last and the cycles are listed in increasing order according to their largest element.  It is said to be in \emph{partition form} if each cycle is written with the largest element last, the cycles are listed in descending length order, and cycles of equal length are listed in increasing order according to their largest element.
\end{defn}
For example, for the permutation $(26)(397)(54)(1)(8)$, we have standard form $(1)(45)(26)(8)(739)$ and partition form $(739)(45)(26)(1)(8)$.  Note that our definition of standard form differs from that in \cite[\S1.3]{Sta99v1} in the cyclic ordering; they are equivalent, but our convention is more convenient for the purposes of this paper.  If we fix an order of the cycles (as we do when writing a permutation in standard and partition forms), the \emph{(ordered) cycle type} is the composition $\alpha \vDash n$ where the $i$th cycle has length $\alpha_i$.  

As alluded to in Notation \ref{notn:z}, we have  \cite[Prop.\ 1.3.2]{Sta99v1}
\begin{equation*} 
 \frac{n!}{z_\lambda} = \#\{ \sigma \in S_n \text{ of cycle type } \lambda \}.
\end{equation*}

We are now ready to define a subset of $S_n$ (which uses one-line notation) needed to prove Proposition~\ref{prop:consistent}.
\begin{notn}[$\splt{\beta}{\sigma}{j}$] Let $\beta \vdash n$ and let $\sigma \in S_n$ be written in one-line notation. First partition $\sigma$ according to $\beta$ (which we draw using $|\!|$), and consider the (disjoint) words $\spl{\beta}{\sigma}==[\sigma^{1}, \dots, \sigma^{\ell}]$, where $\ell = \ell(\beta)$.  Let $\splt{\beta}{\sigma}{j}=\sigma^j$. See Table \ref{tbl:consistent}.
\end{notn}
\begin{defn}[consistent, $\Cons_{\alpha \cleq \beta}$]\label{defn:consistent}
Fix  $\alpha \cleq \beta$ compositions of $n$. Given  $\sigma \in S_n$ written in one-line notation,  let $\sigma^j=\splt{\beta}{\sigma}{j}$.  Then, for each $i = 1, \dots, \ell$, add parentheses to $\sigma^{i}$ according to $\alpha^{(i)}$, yielding disjoint permutations  $\bar{\sigma}^{i}$ (of subalphabets of $[n]$) of cycle type $\alpha^{(i)}$. If the resulting subpermutations $\bar{\sigma}^{i}$ are all in standard form, we say $\sigma$ is \emph{consistent} with $\alpha \cleq \beta$. In other words, we look at subsequences of $\sigma$ and split according to $\beta$ \textit{separately} to see if, for each $j$, the $j$th subsequence is in standard form when further partition by $\alpha^{(j)}$. Define 
\begin{equation*}
\Cons_{\alpha \cleq \beta} = \{\sigma \in S_n \mid \sigma \mbox{ is consistent with } \alpha \cleq \beta\}.
\end{equation*}

\end{defn}

\begin{example}\label{ex:consistent}
Fix $\alpha = (1,1,2,1,3,1)$ and $\beta = (2,2,5)$.  Table~\ref{tbl:consistent} shows several examples of permutations and the partitioning process.\end{example}
\begin{table}[h]
$$
\begin{array}{c@{\qquad}c@{\qquad}c@{\qquad}c}
\text{permutation $\sigma$} & \text{partition by $\beta$} & \text{add $()$ by $\alpha$} & \text{$\sigma$ consistent?}\\\hline
571423689 	
	& \underbrace{57}_{{\sigma}^{1}}|\!|\underbrace{14}_{{\sigma}^{2}}|\!|\underbrace{23689}_{{\sigma}^{3}}	
	& \underbrace{(5)(7)}_{\bar{\sigma}^{1}}|\!|\underbrace{(14)}_{\bar{\sigma}^{2}}|\!|\underbrace{(2)(368)(9)}_{\bar{\sigma}^{3}}	&\text{yes}\\
571428369 	
	& \underbrace{57}_{{\sigma}^{1}}|\!|\underbrace{14}_{{\sigma}^{2}}|\!|\underbrace{28369}_{{\sigma}^{3}}		
	& \underbrace{(5)(7)}_{\bar{\sigma}^{1}}|\!|\underbrace{(14)}_{\bar{\sigma}^{2}}|\!|\underbrace{(2)(\boldsymbol{8}36)(9)}_{\bar{\sigma}^{3}}
	&\textbf{no}\\
571493682 	
	& \underbrace{57}_{{\sigma}^{1}}|\!|\underbrace{14}_{{\sigma}^{2}}|\!|\underbrace{93682}_{{\sigma}^{3}}		
	& \underbrace{(5)(7)}_{\bar{\sigma}^{1}}|\!|\underbrace{(14)}_{\bar{\sigma}^{2}}|\!|\underbrace{(\boldsymbol{9})(36\boldsymbol{8})(\boldsymbol{2})}_{\bar{\sigma}^{3}}		
	&\textbf{no}
\end{array}
$$
\caption{Examples of permutations in $S_9$ and determining if they are in $\Cons_{\alpha\cleq\beta}$ where $\alpha=(1,1,2,1,3,1)$ and $\beta=(2,2,5)$.  Note how $\beta$ subtly influences consistency in the last example.}\label{tbl:consistent}
\end{table}

We also consider the set of all permutations consistent with a given $\alpha$ and all possible choices of (a coarser composition) $\beta$, as each will correspond to various monomial terms in the expansion of a given $\Psi_\alpha$.
\begin{example} We now consider sets of permutations that are consistent with $\alpha=(1,2,1)$ and each coarsening of $\alpha$. The coarsening of $\alpha = (1,2,1)$ are $(1,2,1)$, $(3,1)$, $(1,3)$, and $(4)$, and the corresponding sets of consistent permutations in $S_4$ are \begin{align*}
\Cons_{(1,2,1) \cleq (1,2,1)}& = \{1234, 1243, 1342, 2134, 2143, 2341, 3124, 3142, 3241, 4123, 4132, 4231\},
\\\Cons_{(1,2,1) \cleq (1,3)} &=  \{1234, 2134, 3124, 4123\},\\
\Cons_{(1,2,1) \cleq (3,1)} &= \{1234, 1243, 1342, 2134, 2143, 2341, 3142, 3241\},
\\\Cons_{(1,2,1) \cleq (4)}& = \{1234, 2134\}. \end{align*} Notice that these sets are not disjoint.  
\end{example}
The following is a salient observation that can be seen from this example.
\begin{lemma}\label{lemma:niceobs}If $\sigma$ is consistent with $\alpha\cleq \beta$ for some choice of $\beta$, then $\sigma$ is consistent with $\alpha \cleq\gamma$ for all $\alpha \cleq \gamma \cleq \beta$.\end{lemma}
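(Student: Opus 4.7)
The plan is to argue that consistency is preserved because the standard-form condition, once it holds on each whole $\beta$-chunk, is automatically inherited by any contiguous subblock of the cycles inside that chunk. The only bookkeeping is to confirm that refining $\beta$ to $\gamma$ produces exactly such subblocks, without re-cutting any cycle.

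First I would fix compositions $\alpha \cleq \gamma \cleq \beta$ of $n$ and a permutation $\sigma \in \Cons_{\alpha \cleq \beta}$. I would unpack the layered notation once: write $\spl{\beta}{\sigma} = [\sigma^{1}, \dots, \sigma^{\ell(\beta)}]$ and, within the $i$th block, let $\bar\sigma^{i}$ be the result of inserting parentheses according to $\alpha^{(i)}$ (the subword of $\alpha$ summing to $\beta_i$). By hypothesis each $\bar\sigma^{i}$ is in standard form. Now since $\gamma \cleq \beta$, each $\sigma^{i}$ is itself cut by $\gamma$ into consecutive pieces whose lengths are the parts of $\gamma^{(i)}$; and since $\alpha \cleq \gamma$, the parts of $\alpha$ contained in any one such $\gamma$-piece are exactly a contiguous subword of $\alpha^{(i)}$. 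Consequently the parenthesization of a $\gamma$-piece by its $\alpha$-parts agrees exactly with a contiguous block of cycles of $\bar\sigma^{i}$.

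The remaining content is the observation that if a word of disjoint cycles is in standard form (each cycle has its maximum last, and the cycles are listed in increasing order of maxima), then every contiguous subword of consecutive cycles is also in standard form. The largest-last condition is internal to each cycle, and the increasing order of maxima is preserved under taking subsequences. Applying this to each $\gamma$-piece of each $\sigma^{i}$ shows that $\sigma \in \Cons_{\alpha \cleq \gamma}$.

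There is really no hard step; the only potential pitfall is notational, namely keeping straight that the superscript $(i)$ in ``$\alpha^{(i)}$ with respect to $\beta$'' is simply concatenated from several ``$\alpha^{(j)}$ with respect to $\gamma$'' blocks. I would make this explicit once at the start of the argument so that the final appeal to ``contiguous subword of cycles'' is unambiguous.
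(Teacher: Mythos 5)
Your argument is correct: the key points — that the $\gamma$-cuts of each $\beta$-block fall on cycle boundaries of $\alpha$ (since $\alpha\cleq\gamma\cleq\beta$), and that a contiguous run of cycles from a standard-form word is itself in standard form (largest-last is per-cycle, and a subsequence of increasing maxima is still increasing) — are exactly what is needed. The paper states this lemma without proof, as an observation drawn from the preceding example, and your write-up supplies precisely the intended justification.
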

Note that this implies $Cons_{\alpha \cleq \beta} \subseteq Cons_{\alpha \cleq \alpha}$ for all $\alpha \cleq \beta$.  With these examples in mind, we will use the following lemma to justify a combinatorial interpretation of $\Psi_\alpha$ in Theorem~\ref{thm:combPsi}.
\begin{lemma}\label{lem:cons} If $\alpha \cleq \beta$, we have $$ n!=|\Cons_{\alpha \cleq \beta}|\cdot \pi(\alpha, \beta).$$ 
\end{lemma}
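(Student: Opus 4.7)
The plan is to evaluate $|\Cons_{\alpha \cleq \beta}|$ by factoring the count across the blocks of $\beta$. A permutation $\sigma \in S_n$ written in one-line notation is determined by (i) an ordered partition of $[n]$ into sets $T_1, \ldots, T_{\ell(\beta)}$ of sizes $\beta_1, \ldots, \beta_{\ell(\beta)}$ (the letters appearing in each of the $\beta$-blocks of $\sigma$), together with (ii) a word on each $T_i$. The condition in Definition~\ref{defn:consistent} is local to each block: $\sigma \in \Cons_{\alpha \cleq \beta}$ if and only if, for every $i$, the word on $T_i$ bracketed by $\alpha^{(i)}$ is in standard form. Since standard form depends only on the relative order of the letters, the number of valid arrangements of $T_i$ depends only on $\alpha^{(i)}$ and not on the specific choice of $T_i$. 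Writing $N(\gamma)$ for the number of standard-form arrangements of $[m]$ into a sequence of cycles of ordered lengths $\gamma = (\gamma_1, \ldots, \gamma_k)$ with $m = |\gamma|$, we thus obtain
\[
|\Cons_{\alpha \cleq \beta}| \;=\; \binom{n}{\beta_1, \ldots, \beta_{\ell(\beta)}} \prod_{i=1}^{\ell(\beta)} N(\alpha^{(i)}).
\]

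The problem then reduces to the sub-claim $N(\gamma) = m!/\pi(\gamma)$, which I would prove by induction on $k = \ell(\gamma)$. The base case $k=1$ is immediate: the sole cycle of length $m$ must end in $m$, and the preceding $m-1$ positions may be any permutation of $[m-1]$, yielding $(m-1)! = m!/m = m!/\pi((m))$. For the inductive step, observe that the cycle maxima strictly increase left-to-right, so the overall maximum $m$ must lie in cycle $k$; by standard form, $m$ occupies its last position. The remaining $\gamma_k - 1$ entries of cycle $k$ may be any ordered selection from $[m-1]$, contributing $(m-1)!/(m-\gamma_k)!$ arrangements. The leftover $m - \gamma_k$ elements, as a subset of $[m-1]$, must then form a standard-form arrangement with ordered cycle lengths $(\gamma_1, \ldots, \gamma_{k-1})$; because standard form is a relative-order notion, the inductive hypothesis delivers $(m-\gamma_k)!/\pi(\gamma_1, \ldots, \gamma_{k-1})$ such arrangements. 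Multiplying and using $\pi(\gamma) = m \cdot \pi(\gamma_1, \ldots, \gamma_{k-1})$ gives $N(\gamma) = m!/\pi(\gamma)$.

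Combining the two steps,
\[
|\Cons_{\alpha \cleq \beta}| \;=\; \frac{n!}{\prod_i \beta_i!} \cdot \prod_i \frac{\beta_i!}{\pi(\alpha^{(i)})} \;=\; \frac{n!}{\pi(\alpha, \beta)},
\]
which rearranges to the claim. The step requiring the most care is the sub-claim: its crux is locating $m$ in the last cycle and peeling it off cleanly, and then invoking the fact that $N(\gamma)$ depends only on the ordered composition $\gamma$, not on the ambient alphabet. Once that is in hand, the factoring across $\beta$-blocks is comparatively straightforward, since consistency in Definition~\ref{defn:consistent} is visibly a block-local condition.
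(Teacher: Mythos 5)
Your proof is correct, but it takes a genuinely different route from the paper's. The paper proves the identity by exhibiting an explicit bijection $\Sh: \Cons_{\alpha \cleq \beta} \times \cS_\alpha \to S_n$, where $\cS_\alpha$ is a product of cyclic groups of total cardinality $\pi(\alpha,\beta)$: one cycles the prefixes of each $\beta$-block of a consistent permutation in all possible ways, and inverts by cycling each prefix back until its largest element sits last. You instead compute $|\Cons_{\alpha\cleq\beta}|$ directly: factor over the $\beta$-blocks via the multinomial coefficient (legitimate, since consistency is block-local and order-isomorphism invariant), then establish $N(\gamma) = m!/\pi(\gamma)$ by induction on $\ell(\gamma)$, peeling off the last cycle, which necessarily contains the global maximum $m$ in its final position. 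Both arguments are complete; your inductive step correctly uses $\pi(\gamma) = m\cdot\pi(\gamma_1,\ldots,\gamma_{k-1})$ and the telescoping of the falling factorial against the inductive count. What the paper's bijection buys is reusability: essentially the same cycling construction reappears in the proof of Theorem~\ref{thm:combPsi}, where consistent permutations are cycled within $\alpha$-blocks to sweep out all of $S_n$, so the explicit map does double duty. What your argument buys is a shorter, more elementary derivation of the closed form $|\Cons_{\alpha\cleq\beta}| = n!/\pi(\alpha,\beta)$ that isolates the single-block count $N(\gamma)$ as a clean standalone fact about standard-form permutations.
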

\begin{proof}Consider the set
\[\cS_\alpha= \bigotimes_{i=1}^{\ell(\beta)}\left( \bigotimes_{j=1}^{\ell(\alpha^{(i)})} \nicefrac{\mathbb{Z}}{a_j^{(i)}\mathbb{Z}}\right), \qquad \text{where } a_j^{(i)} = \sum_{r=1}^j \alpha_r^{(i)}.\]
We have
 \[|\cS_\alpha| = \prod_{i=1}^{\ell(\beta)} \prod_{j=1}^{\ell(\alpha^{(i)})}a_j^{(i)} = \pi(\alpha, \beta).\]
Construct a map
\begin{align*}
 \Sh: \Cons_{\alpha\cleq\beta} \times  \cS_\alpha &\to S_n\\
 	(\sigma, s) \quad&\mapsto \sigma_s
\end{align*}
 as follows (see also Example \ref{ex:Rlambdabeta-identity(b)}).

For $s=[s^{(i)}_j]_{i=1\ j=1}^{\ell(\beta)\ \ell(\alpha^{(i)})} \in \cS_\alpha$ and $\sigma \in \Cons_{\alpha\cleq\beta}$, construct a permutation $\sigma_s \in S_n$ as follows. 
\begin{enumerate}[1.]
\item Partition $\sigma$  into  words $\sigma^{1}, \dots, \sigma^{\ell}$ according to $\beta$ so that $\sigma^{i}=\splt{\beta}{\sigma}{i}$. 
\item For each $i=1, \ldots, \ell(\beta)$, modify $\sigma^{i}$ 
by cycling the first $a_j^{(i)}$ values right by $s_j^{(i)}$ for $j=1, \ldots,\ell(\alpha^{(i)})$.
Call the resulting word $\sigma^{i}_s$.
\item Let $\sigma_s = \sigma^{1}_s \cdots \sigma^{\ell}_s$. 
\end{enumerate}

 \noindent This process is invertible as follows. Let $\tau \in S_n$ be written in one-line notation. 
\begin{enumerate}[1'.]
\item Partition $\tau$  into  words $\tau^{1}, \dots, \tau^{\ell}$ according to $\beta$ such that $\tau^i=\splt{\beta}{\tau}{i}$.
\item For each $i=1, \ldots, \ell(\beta)$, let $m_i = \ell(\alpha^{(i)})$. Modify $\tau^{i}$ and record $s^{(i)}_j$ 
for $j=m_i,\ldots,1$ by cycling the first $a_j^{(i)}$ values left until the largest element is last.  Let $s_j^{(i)}$ be the number of required shifts left.
Call the resulting word $\sigma^{i}$.
\item Let $\sigma= \sigma^{1} \cdots \sigma^{\ell}$ and $s  =[s^{(i)}_j]_{i=1\ j=1}^{\ell(\beta)\ \ell(\alpha^{(i)})}$. 
\end{enumerate}
By construction,  $s \in \cS_\alpha$ and $\sigma \in \Cons_{\alpha \cleq \beta}$. It is straightforward to verify that $\sigma_s = \tau$. Therefore $\Sh^{-1}$ is well-defined, so that $\Sh$ is a bijection, and thus $n!=|\Cons_{\alpha\cleq\beta}|\cdot \pi(\alpha,\beta)$.
\end{proof}
\begin{example}\label{ex:Rlambdabeta-identity(b)}
 As an example of the construction of $\Sh$ in the proof of Lemma~\ref{lem:cons}, let $\beta=(5,4) \vDash 9$, and let  $\alpha=(2,3,2,2) \cleq \beta$, so that 
 $$\alpha^{(1)} = (2,3), \quad a_1^{(1)} = 2, \quad a_2^{(1)} = 2+3=5, \quad \text{ and }$$
 $$\alpha^{(2)} = (2,2), \quad a_1^{(2)} = 2, \quad a_2^{(2)} = 2+2=4. \phantom{\quad \text{ and }}$$
 Fix  $\sigma=267394518 \in \Cons_{\alpha \cleq \beta}$, and $s=(s^{(1)},s^{(2)})=((1,3),(0,1)) \in \cS_\alpha$. We want to determine $\sigma_s$. 
 
 \begin{enumerate}[1.]
 \item Partition $\sigma$ according to $\beta$: \quad 
 $\sigma^{1} = 26739$ and $\sigma^{2} = 4518.$
 
 \item Cycle $\sigma^{i}$ according to $\alpha^{(i)}$:
 
 \begin{align*}
 \sigma^{1} = 26739  \xrightarrow{\text{take first $a_1^{(1)} = 2$ terms}}  	&\ \underline{26}739
 		\xrightarrow{\text{cycle $s_1^{(1)} = 1$ right}}  					\ \underline{62}739\\
 		\xrightarrow{\text{take first $a_2^{(1)} = 5$ terms}}  					&\ \underline{62739}
 		\xrightarrow{\text{cycle $s_2^{(1)} = 3$ right}}  					\ \underline{73962} =  \sigma^{1}_s;
\end{align*}

\begin{align*}
 \sigma^{2} = 4518  \xrightarrow{\text{take first $a_1^{(2)} = 2$ terms}}  	&\ \underline{45}18
 		\xrightarrow{\text{cycle $s_1^{(2)} = 0$ right}}  					\ \underline{45}18\\
 		\xrightarrow{\text{take first $a_2^{(2)} = 4$ terms}}  					&\ \underline{4518}
 		\xrightarrow{\text{cycle $s_2^{(2)} = 1$ right}}  					\ \underline{8451} =  \sigma^{2}_s.
\end{align*}
 
 \item Combine to get $\sigma_s = \sigma^{1}_s \sigma^{2}_s = 739628451$.
 \end{enumerate}

 \noindent Going the other way, start with $\beta=(5,4)\vDash 9$,  $\alpha=(2,3,2,2) \cleq \beta$, and $\tau = 739628451 \in S_9$ in one-line notation, and want to find $\sigma \in \Cons_{\alpha \cleq \beta}$ and $s \in S$ such that $\sigma_s = \tau$. 
\begin{enumerate}[1'.]
\item Partition $\tau$ according to $\beta$: \quad 
 $\tau^{1} = 73962$ and $\tau^{2} = 8451.$

\item Cycle $\tau^{i}$ into  $\alpha \cleq \beta$ consistency and record shifts:
{\setlength{\fboxsep}{1.5pt}
 \begin{align*}
 \tau^{1} = 73962 \xrightarrow{\text{take first $a_2^{(1)} = 5$ terms}} &\ \stackrel{(\xleftarrow{~3~}) }{\underline{73\fbox{$9$}62}}
  \xrightarrow{\text{cycle left so largest is last, and record}}   \underline{6273\fbox{$9$}},  & s_2^{(1)} = 3,\\
   \xrightarrow{\text{take first $a_1^{(1)} = 2$ terms}} &\ \stackrel{(\xleftarrow{1})}{\underline{\fbox{$6$}2}}\!\!739 
  \xrightarrow{\text{cycle left so largest is last, and record}}  \underline{2\fbox{$6$}}739 = \sigma^{1},  & s_1^{(1)} = 1;\\
\end{align*}

 \begin{align*}
 \tau^{2} = 8451 \xrightarrow{\text{take first $a_2^{(2)} = 4$ terms}} & \stackrel{(\xleftarrow{~1~})}{\underline{\fbox{$8$}451}}
  \xrightarrow{\text{cycle left so largest is last, and record}}  \underline{451\fbox{$8$}},  & s_2^{(2)} = 1,\\
   \xrightarrow{\text{take first $a_1^{(2)} = 2$ terms}} &\ \stackrel{\checkmark}{\underline{4\fbox{5}}}18
  \xrightarrow{\text{cycle left so largest is last, and record}}  \underline{4\fbox{$5$}}18 = \sigma^{2},  & s_1^{(2)} = 0.\\
\end{align*}}

\item Combine to get $\sigma = \sigma^{1} \sigma^{2} = 267394518$ and $s = ((1,3),(0,1))$ as expected.

\end{enumerate}
\end{example}

One might reasonably ask for a ``combinatorial'' description of the quasisymmetric power sums, one similar to our description of the monomial and fundamental basis of the quasisymmetric functions as in (\ref{eq:Ms}) and (\ref{eq:Fs}). There is not an altogether satisfactory such formula for either of the quasisymmetric power sums, although the previous lemma hints at what appears to be the best possible interpretation in the Type 1 case.  We give the formula, and its quick proof.  Before we begin, we will find the following notation useful both here and in the fundamental expansion of the Type 1 power sums.

\begin{notn}[$\widehat{\alpha(\sigma)}$, $\Cons_{\alpha}$]\label{notn:hat} Given a permutation $\sigma$ and a composition $\alpha$, let $\widehat{\alpha(\sigma)}$ denote the coarsest composition $\beta$ with $\beta \cgeq \alpha$ and $\sigma \in \Cons_{\alpha\cleq\beta}.$ For example, if $\alpha=(3,2,2)$ and $\sigma=1352467$, then $\widehat{\alpha(\sigma)}=(3,4)$.  In addition, we write $\sigma \in \Cons_\alpha$ if we are considering $\beta=\alpha$.\end{notn}

\begin{thm}\label{thm:combPsi}  
Let $m_i(\alpha)$ the multiplicity of $i$ in $\alpha$.  Then
	$$\Psi_{(\alpha_1,\cdots,\alpha_k)}(x_1,\cdots,x_m)
			=\frac{ \prod_{i=1}^n m_i(\alpha)!}{n!}\sum_{\sigma\in S_n}
				\sum_{\substack{1\leq i_1\leq \cdots\leq i_k\leq m\\
				i_j=i_{j+1}\Rightarrow\\ 
				\max(\splt{\alpha}{\sigma}{j})<\max(\splt{\alpha}{\sigma}{j+1})}}x_{i_1}^{\alpha_1}\cdots x_{i_k}^{\alpha_k},$$
                where $\max(\splt{\alpha}{\sigma}{j})$ is the maximum of the list defined in Notation 3.4.
\end{thm}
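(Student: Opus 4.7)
The plan is to start from the monomial expansion~\eqref{eq:PsiM},
$$\Psi_\alpha = z_\alpha \sum_{\beta\cgeq\alpha} \frac{1}{\pi(\alpha,\beta)} M_\beta,$$
and apply Lemma~\ref{lem:cons} to rewrite $\frac{1}{\pi(\alpha,\beta)} = \frac{|\Cons_{\alpha\cleq\beta}|}{n!}$, yielding
$$\Psi_\alpha = \frac{z_\alpha}{n!} \sum_{\beta\cgeq\alpha} |\Cons_{\alpha\cleq\beta}|\,M_\beta.$$
The strategy is then to swap the sums over $\beta$ and over $\sigma\in\Cons_{\alpha\cleq\beta}$, expand each $M_\beta$ as an explicit sum of monomials indexed by weakly increasing sequences, and finally extend the outer sum from $\Cons_{\alpha\cleq\alpha}$ to all of $S_n$.

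First I would observe that $\sigma\in\Cons_{\alpha\cleq\beta}$ is equivalent to the conjunction of (a) $\sigma\in\Cons_{\alpha\cleq\alpha}$ (each $\alpha$-block of $\sigma$ ends with its maximum), and (b) every descent of the sequence $\bigl(\max(\splt{\alpha}{\sigma}{j})\bigr)_{j=1}^{k}$ occurs at an $\alpha$-position where $\beta$ cuts. Swapping the order of summation and expanding $M_\beta$ in terms of its monomial support (indices of $M_\beta$ are sequences $i_1\leq\cdots\leq i_k$ with $i_j<i_{j+1}$ exactly at the $\beta$-cut positions), the sum over compatible $\beta$ relaxes the strict-inequality constraints down to only the descent positions of the max-sequence. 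This produces
$$\sum_{\beta\cgeq\alpha}|\Cons_{\alpha\cleq\beta}|\,M_\beta = \sum_{\sigma\in\Cons_{\alpha\cleq\alpha}} \sum_{\substack{1\le i_1\le\cdots\le i_k\\ i_j=i_{j+1}\Rightarrow \max(\splt{\alpha}{\sigma}{j})<\max(\splt{\alpha}{\sigma}{j+1})}} x_{i_1}^{\alpha_1}\cdots x_{i_k}^{\alpha_k},$$
which is exactly the inner double sum in the theorem, but restricted to $\sigma\in\Cons_{\alpha\cleq\alpha}$.

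Finally, I would enlarge the outer sum from $\Cons_{\alpha\cleq\alpha}$ to all of $S_n$. The inner sum depends on $\sigma$ only through the block maxima, hence only through the (unordered) set-partition of $[n]$ into the $\alpha$-blocks. Each such set-partition is produced by $\prod_i\alpha_i!$ permutations in $S_n$, but by only $\prod_i(\alpha_i-1)!$ permutations in $\Cons_{\alpha\cleq\alpha}$ (those placing the block maximum last), so summing over $S_n$ multiplies the total by $\prod_i\alpha_i$. Combined with the identity $z_\alpha = \prod_i\alpha_i\cdot\prod_i m_i(\alpha)!$, the prefactor $\frac{z_\alpha}{n!}$ reduces to $\frac{\prod_i m_i(\alpha)!}{n!}$, completing the proof. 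The main bookkeeping hurdle is correctly matching the strict-inequality pattern of $(i_1,\dots,i_k)$ with the cut data of $\beta$ in the swap-and-expand step; the key quantitative point is the final extension from $\Cons_{\alpha\cleq\alpha}$ to $S_n$, which supplies exactly the $\prod_i\alpha_i$ needed to convert $z_\alpha$ into $\prod_i m_i(\alpha)!$.
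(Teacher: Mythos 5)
Your proposal is correct and follows essentially the same route as the paper's proof: both start from the monomial expansion plus Lemma~\ref{lem:cons}, regroup the sum over coarsenings $\beta$ as a sum over $\sigma\in\Cons_{\alpha\cleq\alpha}$ of $\sum_{\alpha\cleq\delta\cleq\widehat{\alpha(\sigma)}}M_\delta$, and then pass from $\Cons_{\alpha\cleq\alpha}$ to all of $S_n$ at the cost of the factor $\prod_j\alpha_j$ that turns $z_\alpha$ into $\prod_i m_i(\alpha)!$ (the paper realizes this factor by cycling each $\alpha$-block, you by the quotient $\prod_j\alpha_j!/\prod_j(\alpha_j-1)!$ over block decompositions --- the same count). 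The only nitpick is that the inner sum depends on the \emph{ordered} tuple of $\alpha$-blocks (via the ordered sequence of maxima), not the unordered set partition, but your fiber counts are taken over ordered tuples anyway, so the argument stands.
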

\begin{proof}
First, by Lemmas \ref{lem:cons} and \ref{lemma:niceobs},
 \begin{align}
\Psi_\alpha &=\frac{z_\alpha}{n!}\sum_{\alpha\cleq\beta}|\Cons_{\alpha\cleq\beta}|M_\beta \nonumber\\
&=\frac{z_\alpha}{n!}\sum_{\sigma\in \Cons_\alpha}\sum_{\alpha\cleq\delta\cleq \widehat{\alpha(\sigma)}}M_\delta
\label{eq:unscaled}\\&= \frac{ \prod_{i=1}^n m_i(\alpha)!}{n!}\sum_{\sigma\in \Cons_\alpha}\left(\prod_{i}i^{m_i(\alpha)}\right)\sum_{\alpha\cleq\delta\cleq \widehat{\alpha(\sigma)}}M_\delta.\nonumber\end{align}
The first equality follows by grouping together terms according to the permutation counted rather than the monomial basis.  Next, for each $\sigma\in \Cons_\alpha$, we can assign $\left(\prod_{i}i^{m_i(\alpha)}\right)$ objects by cycling each $\sigma$ (within cycles defined by $\alpha$) in all possible ways.  (Thus for all $j$ we cycle $\splt{\alpha}{\sigma}{j}$.)  The result for a fixed $\alpha$ is all permutations of $\mathfrak{S}_n$ (considered in one line notation by removing the cycle marks). In particular, for any new permutation $\tau$ we may recover the original permutation $\sigma$ by cycling each $\splt{\alpha}{\tau}{j}$, until the maximal term in each cycle is again at the end.  Thus we may instead sum over all permutations and consider the largest element (rather than the last element) in each cycle:  
\begin{align*}
\Psi_\alpha 
&=\frac{ \prod_{i=1}^n m_i(\alpha)!}{n!}
	\sum_{\sigma\in S_n}
	\sum_{\substack{1\leq i_1\leq \cdots\leq i_k\leq m\\
					i_j=i_{j+1}\Rightarrow\\ 
					\max(\splt{\alpha}{\sigma}{j})<\max(\splt{\alpha}{\sigma}{j+1})}}
			x_{i_1}^{\alpha_1}\cdots x_{i_k}^{\alpha_k}.
\end{align*}
 
\end{proof}
While one might hope to incorporate the multiplicities (perhaps summing over a different combinatorial object, or considering cycling parts, then sorting them by largest last element) there does not seem to be a natural way to do so with previously well known combinatorial objects; the heart of the problem is that the definition of consistency inherently uses (sub)permutations written in standard form, while $\frac{n!}{z_\alpha}$ counts permutations with cycle type $\alpha$ in partition form.  This subtlety blocks simplification of formulas throughout the paper. In practice, we expect (\ref{eq:unscaled}) to be a more useful expression because of this fact, but work out the details of the other interpretation here as it cleanly expresses this easily overlooked subtlety.  

\subsubsection{Type 1 quasisymmetric power sums refine symmetric power sums}
We next turn our attention to a proof that the type 1 quasisymmetric power sums refine the symmetric power sums in a natural way.
\begin{notn}[$R_{\alpha\beta}$, $\cO_{\alpha\beta}$]\label{notn:R}
For compositions $\alpha$, $\beta$, let 
\begin{equation*}
R_{\alpha\beta} = | \cO_{\alpha\beta}|, \text{ where } \cO_{\alpha\beta} = \left\{ \left. \begin{matrix}\text{ordered set partitions}\\\text{$(B_1,\cdots, B_{\ell(\beta)})$ of $\{1,\cdots,\ell(\alpha)\}$}\end{matrix} ~\right|  ~
		\beta_j=\sum_{i\in B_j}\alpha_i \text{ for } 1\leq j\leq \ell(\beta) \right\},
		\end{equation*}
i.e.\ $R_{\alpha\beta}$ is the number of ways to group the parts of $\alpha$ so that the parts in the $j$th (unordered) group sum to $\beta_j$.
\end{notn}
\begin{example} If $\alpha=(1,3,2,1)$ and $\beta=(3,4)$, then $\cO_{\alpha\beta}$ consists of 
$$(\{2\}, \{1,3,4\}),\  (\{1,3\}, \{2,4\}), \quad \text{and}  \quad (\{ 3,4\},\{1,2 \}),$$ 
 corresponding, respectively, to the three possible ways of grouping parts of $\alpha$,
$$(\alpha_2 , \alpha_1+\alpha_3+\alpha_4),\  (\alpha_1+\alpha_3 , \alpha_2+\alpha_4), \quad \text{and} \quad ( \alpha_3+\alpha_4 , \alpha_1+\alpha_2).$$ 
Therefore $R_{\alpha \beta} = 3$.
\end{example}

For partitions $\lambda, \mu$, we have 
\begin{equation*}\label{eq:p-in-m}
	p_\lambda=\sum_{\mu \vdash n}R_{\lambda \mu}m_\mu
\end{equation*}
(see for example \cite[p.297]{Sta99v2}).
Further, if $\widetilde{\alpha}$ is the partition obtained by putting the parts of $\alpha$ in decreasing order as before, then 
\begin{equation*}\label{eq:p-in-M}
R_{\alpha\beta}=R_{\widetilde{\alpha}\widetilde{\beta}}, \qquad \text{ implying }\qquad p_\lambda=\sum_{\alpha \models n}R_{\lambda \alpha}M_\alpha.
\end{equation*}

The refinement of the symmetric power sums can be established either by exploiting duality or through a bijective proof.  We present the bijective proof first (using it to justify our combinatorial interpretation of the basis) and defer the simpler duality argument to \S~\ref{sec:products}. 
\begin{thm}\label{thm:refine}
Let $\lambda \vdash n$.  Then 
\[p_\lambda = \sum_{\alpha: \widetilde{\alpha}=\lambda} \Psi_\alpha.\]
\end{thm}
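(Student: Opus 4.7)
The plan is to verify the identity by matching coefficients in the monomial basis $\{M_\beta\}$. Combining the expansion $\Psi_\alpha = z_\alpha \sum_{\beta \cgeq \alpha} \pi(\alpha,\beta)^{-1} M_\beta$ from (\ref{eq:PsiM}) with the identity $|\Cons_{\alpha\cleq\beta}| = n!/\pi(\alpha,\beta)$ from Lemma~\ref{lem:cons}, we get $\Psi_\alpha = \frac{z_\alpha}{n!} \sum_{\beta \cgeq \alpha} |\Cons_{\alpha\cleq\beta}|\, M_\beta$. Since $z_\alpha = z_\lambda$ for every $\alpha$ with $\widetilde\alpha = \lambda$, and since $p_\lambda = \sum_{\beta} R_{\lambda\beta} M_\beta$, the theorem reduces to the combinatorial identity
\[ \sum_{\substack{\alpha \,:\, \widetilde\alpha = \lambda\\ \alpha \cleq \beta}} |\Cons_{\alpha \cleq \beta}| \ =\ \frac{n!}{z_\lambda}\, R_{\lambda\beta} \]
for every $\beta \vDash n$.

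The heart of the proof is a bijection between the disjoint union $\bigsqcup_\alpha \Cons_{\alpha \cleq \beta}$ (over $\alpha$ with $\widetilde\alpha = \lambda$ and $\alpha \cleq \beta$) and the Cartesian product of the set of permutations of $[n]$ of cycle type $\lambda$ (of cardinality $n!/z_\lambda$) with $\cO_{\lambda\beta}$ (of cardinality $R_{\lambda\beta}$). Given $(\alpha, \sigma)$ with $\sigma \in \Cons_{\alpha\cleq\beta}$, partition the one-line notation of $\sigma$ according to $\beta$ and insert cycle parentheses block-by-block using $\alpha^{(j)}$ as in Definition~\ref{defn:consistent}; consistency guarantees each $\bar\sigma^{j}$ is in standard form and hence encodes a genuine disjoint permutation, whose concatenation gives $\pi \in S_n$ of cycle type $\widetilde\alpha = \lambda$. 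The record of which cycle of $\pi$ came from which $\bar\sigma^{j}$ is an ordered set partition of its cycles with block size sums $\beta_1, \ldots, \beta_{\ell(\beta)}$. The inverse takes $(\pi, (C_1, \ldots, C_{\ell(\beta)}))$ and, inside each $C_j$, lists its cycles in standard form (largest element last within each cycle, cycles ordered by increasing maximum element); reading off the elements yields a word $\sigma^j$, and the list of cycle lengths yields a composition $\alpha^{(j)}$ of $\beta_j$. Concatenating produces $\sigma$ and $\alpha$, and by construction $\sigma \in \Cons_{\alpha\cleq\beta}$.

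The main obstacle is to argue carefully that the number of ordered set partitions of the cycles of a fixed permutation of cycle type $\lambda$ into blocks with size sums $\beta_j$ equals $R_{\lambda\beta}$: this is where we confront the subtlety flagged after Theorem~\ref{thm:combPsi}, namely that the multiset of cycle lengths of $\pi$ is $\lambda$ but the cycles themselves are distinguishable objects whose ``natural'' standard form ordering need not agree with the weakly-decreasing ordering on $\lambda$. The key observation resolving this is that $R_{\lambda\beta}$ counts ordered set partitions of $\ell(\lambda)$ distinguishable objects of weights $\lambda_1, \ldots, \lambda_{\ell(\lambda)}$ into blocks with weight sums $\beta_j$, a count depending only on the multiset $\lambda$ and not on any labeling of the objects. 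Once this is in hand, checking that the two constructions above are well-defined and mutually inverse is routine: the consistency condition is exactly what ensures cycle parentheses can be recovered uniquely from $(\pi, C)$, and $\widetilde\alpha = \lambda$ because the parts of $\alpha$ are precisely the cycle lengths of $\pi$.
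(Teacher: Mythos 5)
Your proposal is correct and follows essentially the same route as the paper: reduce to the coefficient identity $\sum_{\alpha\cleq\beta,\,\widetilde\alpha=\lambda}|\Cons_{\alpha\cleq\beta}| = R_{\lambda\beta}\,n!/z_\lambda$ via Lemma~\ref{lem:cons} and the monomial expansions, then exhibit the bijection sending $(\alpha,\sigma)$ to a permutation of cycle type $\lambda$ together with an ordered set partition of its cycles --- this is exactly the map $\Br$ in the paper's Proposition~\ref{prop:consistent}. Your ``key observation'' about identifying ordered set partitions of the (distinguishable) cycles with $\cO_{\lambda\beta}$ is handled in the paper by sorting the cycles into partition form before indexing them, which is the same resolution.
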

\begin{cor}\label{cor:refine}  $\Psi_\alpha=z_\alpha\psi_\alpha=z_{\widetilde{\alpha}}\psi_\alpha$ is the unique rescaling of the $\psi$ basis (that is the dual basis to $\p$) which refines the symmetric power sums with unit coefficients.
\end{cor}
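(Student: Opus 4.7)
The plan is to deduce the corollary directly from Theorem~\ref{thm:refine} together with the basis property of $\{\psi_\alpha\}$. The statement has two components: existence of a rescaling $\{c_\alpha \psi_\alpha\}$ that refines each symmetric power sum with unit coefficients, and uniqueness of such a rescaling.

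Existence is precisely the content of Theorem~\ref{thm:refine}: it asserts $p_\lambda = \sum_{\widetilde{\alpha}=\lambda}\Psi_\alpha = \sum_{\widetilde{\alpha}=\lambda} z_\alpha\psi_\alpha$, so the family $\{z_\alpha\psi_\alpha\}_{\alpha\vDash n}$ is a rescaling of $\{\psi_\alpha\}$ whose expansion of every $p_\lambda$ has all coefficients equal to $1$. For uniqueness, suppose $\{c_\alpha\psi_\alpha\}_{\alpha\vDash n}$ is any rescaling (with each $c_\alpha\ne 0$) satisfying $p_\lambda = \sum_{\widetilde{\alpha}=\lambda} c_\alpha\psi_\alpha$ for every $\lambda\vdash n$. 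Because $\{\psi_\alpha\}_{\alpha\vDash n}$ is a basis of $\QS_n$ (being dual to the $\NS_n$ basis $\{\p_\alpha\}_{\alpha\vDash n}$), the expansion of $p_\lambda$ in this basis is unique; comparing with the expansion supplied by Theorem~\ref{thm:refine} forces $c_\alpha = z_\alpha$ for every $\alpha$ with $\widetilde{\alpha}=\lambda$. Letting $\lambda$ range over all partitions of $n$ gives $c_\alpha = z_\alpha$ for all $\alpha\vDash n$, establishing uniqueness.

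The final equality $z_\alpha = z_{\widetilde{\alpha}}$ appearing in the statement is immediate from Notation~\ref{notn:z}, where $z_\alpha$ is defined to be $z_{\widetilde{\alpha}}$. There is no real obstacle here: the substantive content of the corollary lies entirely in Theorem~\ref{thm:refine}, and the uniqueness portion is a formal consequence of the uniqueness of expansions in a basis.
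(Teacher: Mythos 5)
Your proof is correct and follows the same route the paper intends: existence is exactly Theorem~\ref{thm:refine}, and uniqueness is the formal consequence of $\{\psi_\alpha\}$ being a basis (dual to $\{\p_\alpha\}$), so that the coefficients $c_\alpha$ in any expansion of $p_\lambda$ are forced. The paper states the corollary without a separate proof precisely because this is the argument; nothing is missing.
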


Recall from \eqref{eq:PsiM} that for a composition $\alpha$, 
$$\Psi_\alpha = \sum_{\beta \cgeq \alpha}\frac{z_\alpha}{\pi(\alpha,\beta)}M_\beta.$$  
Summing over $\alpha$ rearranging to $\lambda$ and multiplying on both sides by ${n!}/{z_\lambda}$, we see that to prove Theorem \ref{thm:refine} it is sufficient to establish the following for a fixed $\beta\vDash n$.

\begin{prop}\label{prop:consistent}
For $\lambda \vdash n$ and $\beta \vDash n$, 
\[R_{\lambda\beta}\frac{n!}{z_{\lambda}}=\sum_{\substack{\alpha \cleq \beta \\ \widetilde{\alpha}=\lambda}}\frac{n!}{\pi(\alpha, \beta)}.\]
\end{prop}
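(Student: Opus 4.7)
The plan is to convert the right-hand side into a count of consistent permutations via Lemma~\ref{lem:cons}, and then exhibit a bijection between the pairs of objects naturally counted by the two sides.

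By Lemma~\ref{lem:cons}, $n!/\pi(\alpha,\beta) = |\Cons_{\alpha\cleq\beta}|$, so the right-hand side counts pairs $(\tau,\alpha)$ with $\alpha \cleq \beta$, $\widetilde{\alpha} = \lambda$, and $\tau \in \Cons_{\alpha\cleq\beta}$. Meanwhile, since $n!/z_\lambda$ counts permutations of cycle type $\lambda$, the left-hand side counts pairs $(\sigma,\mathbf{B})$ where $\sigma \in S_n$ has cycle type $\lambda$ and $\mathbf{B} = (B_1,\dots,B_{\ell(\beta)}) \in \cO_{\lambda\beta}$.

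Given $(\sigma,\mathbf{B})$, I would build $(\tau,\alpha)$ as follows. Write $\sigma$ in partition form with cycles $c_1,\dots,c_{\ell(\lambda)}$, so $|c_i|=\lambda_i$. For each $j$, take the multiset of cycles $\{c_i : i \in B_j\}$, arrange them in standard form, and concatenate their one-line words to form a word $w_j$; set $\tau = w_1 w_2 \cdots w_{\ell(\beta)}$ and let $\alpha^{(j)}$ record the cycle lengths in $w_j$ in that order. Because $\mathbf{B} \in \cO_{\lambda\beta}$ we have $|\alpha^{(j)}| = \beta_j$, so $\alpha \cleq \beta$ and $\widetilde{\alpha} = \lambda$; by construction $\tau \in \Cons_{\alpha\cleq\beta}$. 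For the inverse, split $\tau$ by $\beta$ and then by $\alpha$ to extract the subpermutations $\bar\tau^j$; their cycles together assemble into a permutation $\sigma \in S_n$ of cycle type $\lambda$; label these cycles in partition form as $c_1,\dots,c_{\ell(\lambda)}$; and set $B_j = \{i : c_i \text{ appears in } \bar\tau^j\}$.

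The only real care needed — and the step I expect to be the main obstacle — is the interplay between the two canonical orderings: $\sigma$'s cycles are labeled in \emph{partition form} (by length, with ties broken by largest element), while inside each $\bar\tau^j$ the cycles appear in \emph{standard form} (ordered only by largest element). The two maps are mutual inverses because standard-form ordering of any given multiset of cycles is unique, and partition form and standard form agree when restricted to cycles of a common length. Thus $\mathbf{B}$ together with the partition-form labeling of $\sigma$ determines each word $w_j$ (hence $\tau$) unambiguously, and conversely the cycles visible in each $\bar\tau^j$ together with the partition-form labeling of $\sigma$ determine which index $i$ belongs to which block $B_j$. Combining the resulting equality $R_{\lambda\beta}(n!/z_\lambda) = \sum_{\alpha\cleq\beta,\ \widetilde{\alpha}=\lambda}|\Cons_{\alpha\cleq\beta}|$ with Lemma~\ref{lem:cons} gives the proposition.
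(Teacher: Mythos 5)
Your proposal is correct and follows essentially the same route as the paper: reduce via Lemma~\ref{lem:cons} to the identity $R_{\lambda\beta}\,n!/z_\lambda=\sum_{\alpha\cleq\beta,\ \widetilde{\alpha}=\lambda}|\Cons_{\alpha\cleq\beta}|$, then establish it by the bijection between pairs $(\alpha,\sigma)$ with $\sigma$ consistent and pairs in $\cO_{\lambda\beta}\times S_n^{\lambda}$ (your forward map is the paper's $\Br^{-1}$; the paper presents the same bijection in the other direction). Your attention to the interaction between partition form and standard form is exactly the point the paper's construction also relies on, so no gap remains.
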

\begin{proof} The proof is in two steps, with the first being Lemma~\ref{lem:cons}.  Let $\beta\vDash n$ and $\lambda \vdash n$.  We must establish that \begin{equation}\label{R=cons}R_{\lambda\beta} \frac{n!}{z_{\lambda}} = \sum_{\substack{\alpha \cleq \beta\\\widetilde{\alpha}=\lambda}}|\Cons_{\alpha\cleq\beta}|.\end{equation}

Let $\cO_{\lambda\beta}$ be  the set of ordered set partitions as  defined in \NR \ref{notn:R}. For each refinement $\alpha \cleq \beta$, let  
$C_{\alpha}=\{(\alpha, \sigma) \mid \sigma \mbox{ is consistent with } \alpha \cleq \beta\}$ and define $ C=\bigcup_{\substack{\alpha \cleq \beta \\ \widetilde{\alpha}=\lambda}} C_{\alpha}.$
Denote by $S_n^{\lambda}$ the set of permutations of $n$ of cycle type $\lambda$. Then we prove \eqref{R=cons}, 
by defining the map 
	$$\Br: C \to \cO_{\lambda\beta} \times S_n^{\lambda}$$ 
as follows (see also Example \ref{ex:Rlambdabeta-identity(a)}), and showing that it is a bijection.  Start with $(\alpha, \sigma)\in C$, with $\sigma$ written in one-line notation. 
\begin{enumerate}[1.]
\item Add parentheses to $\sigma$ according to $\alpha$, and denote the corresponding permutation (now written in cycle notation) $\bar{\sigma}$.
\item Sort the cycles of $\bar{\sigma}$ into partition form (as in Definition \ref{def:standard-partition}), and let $c_i$ be the $i$th cycle in this ordering. 
\item Comparing to $\bar{\sigma}^{1}$, \dots, $\bar{\sigma}^{\ell}$ as in Definition \ref{defn:consistent} of consistent permutations, define $B=(B_1, \dots, B_k)$ by $j \in B_i$ when $c_j$ belongs to $\bar{\sigma}^{i}$,   i.e.\ $\bar{\sigma}^{i} = \prod_{j \in B_i} c_j$.
\end{enumerate} 
Define $\Br(\alpha, \sigma)=(B, \bar{\sigma}).$  Since $\alpha$ rearranges to $\lambda$, $\bar{\sigma}$ has (unordered) cycle type $\lambda$.  And since $\prod_{j \in B_i} c_j = \bar{\sigma}^{i},$ we have $\sum_{j \in B_i}\lambda_j=\beta_i.$  Thus 
$\Br: (\alpha, \sigma) \mapsto (B, \bar{\sigma})$ is well-defined.

Next, we show that $\Br$ is invertible, and therefore a bijection. Namely, fix $B = (B_1, \ldots, B_{\ell}) \in \cO_{\lambda \beta}$ and $\bar{\sigma} \in S_n^\lambda$, writing $\bar{\sigma} = c_1 c_2 \cdots c_k$ in partition form. Then determine $(\alpha, \sigma) \in C$ as follows. 
\begin{enumerate}[1'.]
\item Let $\bar{\sigma}^{i} = \prod_{j \in B_i} c_j$,
and sort $\bar{\sigma}^{i}$ into standard form (as a permutation of the corresponding subalphabet of $[n]$). 
\item Let $\alpha$ be the (ordered) cycle type of $\bar{\sigma}^{1}\bar{\sigma}^{2}\cdots\bar{\sigma}^{\ell}$. 
\item Delete the parentheses. Let $\sigma$ be the corresponding permutation written in one-line notation. 
\end{enumerate}
By construction, $\alpha$ refines $\beta$ and is a rearrangement of $\lambda$, and $\sigma$ is ($\alpha \cleq \beta$)-consistent. And it is straightforward to verify that this process exactly inverts $\Br$. Therefore $\Br^{-1}$ is well-defined.  This implies that $\Br$ is a bijection and hence \eqref{R=cons} holds. 

Then it follows from Lemma \ref{lem:cons} that 
\[R_{\lambda\beta}\frac{n!}{z_\lambda}=\sum_{\substack{\alpha\cleq\beta\\\widetilde{\alpha}=\lambda}}\frac{n!}{\pi(\alpha,\beta)}\]
as desired.
\end{proof}

\begin{example}\label{ex:Rlambdabeta-identity(a)} As an example of the construction of $\Br$ in the proof of Proposition~\ref{prop:consistent}, let $\beta=(5,4)$, $\alpha=(2,3;2,2)$ and $\sigma=267394518$. We want to determine $\Br(\alpha,\sigma)$.

\begin{enumerate}[1.]
\item Add parentheses to $\sigma$ according to $\alpha$: \ \ $\bar{\sigma} = (26)(739)(45)(18)$.
\item Partition-sort the cycles of $\bar{\sigma}$:  $\bar{\sigma} = \underbrace{(739)}_{c_1}\underbrace{(45)}_{c_2}\underbrace{(26)}_{c_3}\underbrace{(18)}_{c_4}$.
\item Compare to the $\beta$-partitioning: $\underbrace{(26)(739)}_{\bar{\sigma}^{1}}|\!|\underbrace{(45)(18)}_{\bar{\sigma}^{2}}$.
So $B=(\{1,3\},\{2,4\})$, since $\bar{\sigma}^{1} = c_1 c_3$ and $\bar{\sigma}^{2} = c_2 c_4$. 
\end{enumerate}

\noindent Going the other way, start with  $B=(\{1,3\},\{2,4\})$  and  $\bar{\sigma} = (739)(45)(26)(18)$  written in partition form. 
\begin{enumerate}[1'.]
\item Place cycles into groups according to $B$:  $(739)(26) |\!| (45)(18)$.
\item Sort within parts in ascending order by largest value: $(26) (739)|\!| (45)(18)$\\
	Then $\alpha = (2,3,2,2)$.
\item Delete parentheses to get $\sigma = 267394518$.
\end{enumerate}
\end{example}

\subsection{Type 2 quasisymmetric power sums}\label{sec:type2power}
In order to describe the second type of quasisymmetric power sums, we introduce the following notation.

\begin{notn}[$\spab(\alpha,\beta)$] In the following, $\spab(\beta,\alpha) = \prod_i \spab(\beta^{(i)})$ for $\spab(\gamma)=\ell(\gamma)!\prod_j\gamma_j$.  
\end{notn}

As shown in~\cite{GKLLRT94}, we can write the noncommutative complete homogeneous functions in terms of the noncommutative power sums of type 2 as 
\[\h_\alpha = \sum_{\beta\cleq\alpha}\frac{1}{\spab(\beta,\alpha)}\ptwo_\beta.\] 

By duality, the basis dual to $\ptwo_\beta$ can be written as a sum of monomial symmetric functions as 
\[\phi_\alpha = \sum_{\beta\cgeq\alpha}\frac{1}{\spab(\alpha,\beta)}M_\beta.\] 

We then define the type 2 quasisymmetric power sums as
\[\Phi_\alpha = z_\alpha  \phi_\alpha.\]  

A similar polynomial $P_\alpha$ is defined in~\cite{MalReu95}, and is related to $\phi_\alpha$ by $\phi_\alpha=\left(\prod_i \alpha_i\right)^{-1} P_\alpha$.  Note that this means Malvenuto and Reutenaur's polynomial $P_\alpha$ is not dual to $\ptwo_\alpha$ and (by the following results) does not refine the symmetric power sums.  For example, $\Phi_{322} = 2M_{322}+M_{52}+M_{34}+\frac{1}{3}M_7$ whereas $P_{322}=M_{322}+\frac{1}{2}M_{52}+\frac{1}{2}M_{34}+\frac{1}{6}M_7$.  

We can obtain a more combinatorial description for $\Phi_\alpha$ by rewriting the coefficients and interpreting them in terms of ordered set partitions. 
\begin{notn}[$\OSP(\alpha,\beta)$]\label{notn:OSP} Let $\alpha \cleq \beta$ and let $\OSP(\alpha,\beta)$ denote the ordered set partitions of $\{1,\ldots,\ell(\alpha)\}$ with block size $|B_i|=\ell(\alpha^{(i)})$. If $\alpha \not\cleq \beta$, we set $\OSP(\alpha,\beta)=\emptyset$.
\end{notn}
\begin{thm}\label{thm:power2}
Let $\alpha\vDash n$ and let $m_i$ denote the number of parts of $\alpha$ of size $i$.  Then
\[\Phi_\alpha = \binom{\ell(\alpha)}{m_1,m_2,\ldots,m_k}^{-1}\sum_{\beta\cgeq\alpha}|\OSP(\alpha,\beta)|M_\beta.\]
\end{thm}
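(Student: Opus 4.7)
The plan is to show that the coefficient $z_\alpha/\spab(\alpha,\beta)$ appearing in the definition $\Phi_\alpha = z_\alpha \phi_\alpha$ rewrites exactly as $\binom{\ell(\alpha)}{m_1,\ldots,m_k}^{-1}|\OSP(\alpha,\beta)|$. This will be a direct algebraic manipulation followed by a combinatorial reinterpretation of a multinomial coefficient as a count of ordered set partitions, so there is no substantial obstacle, just bookkeeping.

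First, I would unpack $\spab(\alpha,\beta)$ using its definition. Since $\spab(\alpha,\beta) = \prod_{i=1}^{\ell(\beta)} \spab(\alpha^{(i)}) = \prod_{i=1}^{\ell(\beta)} \ell(\alpha^{(i)})! \prod_{j} \alpha^{(i)}_j$, and since the parts $\alpha^{(i)}_j$ collectively enumerate all parts of $\alpha$ exactly once, the product of the $\alpha^{(i)}_j$ factors collapses to $\prod_i \alpha_i = \prod_i i^{m_i}$. Therefore
\[\spab(\alpha,\beta) = \Bigl(\prod_{i=1}^{\ell(\beta)} \ell(\alpha^{(i)})!\Bigr) \cdot \prod_i i^{m_i}.\]

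Next, I would use $z_\alpha = \prod_i i^{m_i} m_i!$ from Notation~\ref{notn:z} to simplify
\[\frac{z_\alpha}{\spab(\alpha,\beta)} = \frac{\prod_i m_i!}{\prod_{i=1}^{\ell(\beta)}\ell(\alpha^{(i)})!}.\]
Multiplying numerator and denominator by $\ell(\alpha)!$ and rearranging gives
\[\frac{z_\alpha}{\spab(\alpha,\beta)} = \binom{\ell(\alpha)}{m_1,\ldots,m_k}^{-1} \cdot \frac{\ell(\alpha)!}{\prod_{i=1}^{\ell(\beta)} \ell(\alpha^{(i)})!}.\]

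The final step is to recognize the fraction $\ell(\alpha)!/\prod_i \ell(\alpha^{(i)})!$ as the multinomial coefficient counting the number of ways to partition $\{1,\ldots,\ell(\alpha)\}$ into an ordered sequence of blocks $(B_1,\ldots,B_{\ell(\beta)})$ with $|B_i| = \ell(\alpha^{(i)})$; that is, $|\OSP(\alpha,\beta)|$ from Notation~\ref{notn:OSP}. Substituting into $\Phi_\alpha = z_\alpha \sum_{\beta \cgeq \alpha} \spab(\alpha,\beta)^{-1} M_\beta$ yields the stated formula. (For $\beta$ that do not coarsen $\alpha$ the term contributes $0$ on both sides, consistent with $\OSP(\alpha,\beta)=\emptyset$ in that case.)
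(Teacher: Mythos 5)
Your proposal is correct and follows essentially the same route as the paper: unpack $\spab(\alpha,\beta)$ so that the part-products collapse to $\prod_i i^{m_i}$, cancel against $z_\alpha=\prod_i i^{m_i}m_i!$, insert $\ell(\alpha)!$ to produce the inverse multinomial coefficient, and identify the remaining ratio $\ell(\alpha)!/\prod_i\ell(\alpha^{(i)})!$ with $|\OSP(\alpha,\beta)|$. No gaps.
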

\begin{proof}
Given $\alpha\vDash n$, let $m_i$ denote the number of parts of $\alpha$ of size $i$.  Then 
\begin{align*}
\Phi_\alpha&=\sum_{\beta\cgeq\alpha} \frac{z_\alpha}{\spab(\alpha,\beta)}M_\beta\\
&= \sum_{\beta\cgeq\alpha}\frac{z_\alpha}{\prod_j \alpha_j\prod_i (\ell(\alpha^{(i)}))! } M_\beta\\
&=\frac{z_\alpha}{\ell(\alpha)!\prod_j \alpha_j}\sum_{\beta\cgeq\alpha}\frac{\ell(\alpha)!}{\prod_i(\ell(\alpha^{(i)}))!}M_\beta\\
&=\binom{\ell(\alpha)}{m_1,m_2,\ldots,m_k}^{-1}\sum_{\beta\cgeq\alpha}\frac{\ell(\alpha)!}{\prod_i(\ell(\alpha^{(i)}))!}M_\beta.
\end{align*}
Note that $\dfrac{\ell(\alpha)!}{\prod_i(\ell(\alpha^{(i)}))!}$ is the number of ordered set partitions of $\{1,\ldots,\ell(\alpha)\}$ with block size $B_i=\ell(\alpha^{(i)})$. Thus 
\[\Phi_\alpha = \binom{\ell(\alpha)}{m_1,m_2,\ldots,m_k}^{-1}\sum_{\beta\cgeq\alpha}|\OSP(\alpha,\beta)|M_\beta.\qedhere\]
\end{proof}

\begin{thm}{\label{thm:2refine}}
The type 2 quasisymmetric power sums refine the symmetric power sums by 
$$p_\lambda = \sum_{\widetilde{\alpha}=\lambda}\Phi_\alpha.$$
\end{thm}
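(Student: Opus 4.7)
The plan is to expand both sides in the monomial basis $\{M_\beta\}$ and compare coefficients. Using Theorem~\ref{thm:power2} together with the observation that $m_i(\alpha) = m_i(\lambda)$ whenever $\widetilde{\alpha} = \lambda$, one obtains
\[ \sum_{\widetilde{\alpha} = \lambda} \Phi_\alpha \; = \; \binom{\ell(\lambda)}{m_1(\lambda), m_2(\lambda), \ldots}^{-1} \sum_\beta \Bigg( \sum_{\substack{\alpha \cleq \beta \\ \widetilde{\alpha} = \lambda}} |\OSP(\alpha, \beta)| \Bigg) M_\beta, \]
while $p_\lambda = \sum_\beta R_{\lambda\beta} M_\beta$. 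Thus the theorem reduces to the identity
\[ \binom{\ell(\lambda)}{m_1(\lambda), m_2(\lambda), \ldots} \cdot R_{\lambda\beta} \; = \; \sum_{\substack{\alpha \cleq \beta \\ \widetilde{\alpha} = \lambda}} |\OSP(\alpha, \beta)| \]
for each $\beta \vDash n$, which I would prove by a double count.

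To carry out the double count, I would introduce the auxiliary set
\[ T \; = \; \{(\gamma, f) : \gamma \vDash n, \; \widetilde{\gamma} = \lambda, \; f : [\ell(\gamma)] \to [\ell(\beta)], \; \textstyle\sum_{i \in f^{-1}(j)} \gamma_i = \beta_j \text{ for every } j\}. \]
Summing over $\gamma$ first, the invariance $R_{\gamma\beta} = R_{\widetilde{\gamma}\widetilde{\beta}} = R_{\lambda\beta}$ together with the fact that the multinomial coefficient counts compositions rearranging to $\lambda$ gives $|T| = \binom{\ell(\lambda)}{m_1(\lambda), m_2(\lambda), \ldots} \cdot R_{\lambda\beta}$. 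To obtain the other side of the identity, I would exhibit a bijection
\[ T \longleftrightarrow \{(\alpha, C) : \alpha \cleq \beta, \; \widetilde{\alpha} = \lambda, \; C \in \OSP(\alpha, \beta)\} \]
sending $(\gamma, f) \mapsto (\alpha, C)$, where $C_j := f^{-1}(j)$ is viewed as an ordered set partition of $[\ell(\alpha)]$ and $\alpha^{(j)}$ is the subsequence of $\gamma$ at the positions in $C_j$ listed in increasing order; concatenation yields a composition $\alpha \cleq \beta$ with $\widetilde{\alpha} = \lambda$ and $|C_j| = \ell(\alpha^{(j)})$. The inverse places the $k$-th entry of $\alpha^{(j)}$ at the $k$-th smallest index of $C_j$, thereby recovering $\gamma$, and sets $f(i) = j$ whenever $i \in C_j$.

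The main obstacle, though mild, is bookkeeping: both $\gamma$ and $\alpha$ rearrange to $\lambda$ but play distinct roles in the two encodings, so some care is needed to verify that the two directions truly invert one another while preserving the block-sum conditions. A shorter alternative proceeds by duality: pair both sides with $\ptwo_\gamma$. The scaled duality $\langle \Phi_\alpha, \ptwo_\gamma \rangle = z_\alpha \delta_{\alpha,\gamma}$ sends the right-hand side to $z_\lambda [\widetilde{\gamma} = \lambda]$; for the left, the abelianization $\pi : \NS \to \Sym$ (obtained from (\ref{eq:type2gen}) by letting variables commute) sends $\ptwo_\gamma$ to $p_{\widetilde{\gamma}}$, and dualizing $\pi$ to the inclusion $\Sym \hookrightarrow \QS$ yields $\langle p_\lambda, \ptwo_\gamma \rangle = \langle p_\lambda, p_{\widetilde{\gamma}}\rangle_{\Sym} = z_\lambda [\widetilde{\gamma} = \lambda]$, matching.
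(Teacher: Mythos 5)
Your proposal is correct and takes essentially the same route as the paper: both reduce the theorem to the identity $\binom{\ell(\lambda)}{m_1,m_2,\ldots}R_{\lambda\beta} = \sum_{\alpha\cleq\beta,\,\widetilde{\alpha}=\lambda}|\OSP(\alpha,\beta)|$ (the paper's Lemma~\ref{lem:type2refine}) and prove it by a bijection, your set $T$ of pairs $(\gamma,f)$ being a repackaging of the paper's $\cA_\lambda\times\cO_{\lambda\beta}$ with the index-permutation bookkeeping ($w_C$) absorbed into the construction. Your statement of the bijection is if anything slightly more direct, and the concluding duality sketch is a legitimate shortcut the paper does not spell out for the type 2 case.
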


Here the proof requires only a single (and less complex) bijection. 
\begin{lemma}\label{lem:type2refine}
Let $\lambda\vdash n$ and $\beta \vDash n$.  Let $m_i$ denote the number of parts of $\lambda$ of size $i$.  Then 
\[\binom{\ell(\lambda)}{m_1,m_2,\ldots,m_k}R_{\lambda\beta} = \sum_{\substack{\alpha\cleq\beta\\\widetilde{\alpha}=\lambda}}|\OSP(\alpha,\beta)|.\]
\end{lemma}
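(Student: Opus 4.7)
The plan is to establish the identity by a direct bijection between two sets of pairs that each count the two sides.

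First, I would reinterpret the left-hand side. The multinomial coefficient $\binom{\ell(\lambda)}{m_1,\ldots,m_k}$ counts the number of distinct compositions $\gamma$ with $\widetilde{\gamma}=\lambda$. Since any such $\gamma$ has the same multiset of parts as $\lambda$, the count $R_{\lambda\beta}$ equals the number of ordered set partitions $B=(B_1,\ldots,B_{\ell(\beta)})$ of $[\ell(\gamma)]$ satisfying $\sum_{i \in B_j}\gamma_i = \beta_j$ for each $j$. Thus the LHS counts pairs $(\gamma, B)$ where $\gamma$ is a composition rearrangement of $\lambda$ and $B$ is an ordered set partition of $[\ell(\gamma)]$ with the sum condition relative to $\gamma$. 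The RHS, on the other hand, sums over $(\alpha, P)$ where $\alpha \cleq \beta$, $\widetilde{\alpha}=\lambda$, and $P \in \OSP(\alpha,\beta)$.

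Next I would define the bijection $\Phi:(\gamma,B)\mapsto(\alpha,P)$. Write each block as $B_j=\{b^{(j)}_1<\cdots<b^{(j)}_{|B_j|}\}$, and form $\alpha$ by listing the parts $\gamma_{b^{(1)}_1},\ldots,\gamma_{b^{(1)}_{|B_1|}},\gamma_{b^{(2)}_1},\ldots$ in that order, concatenating across the blocks. Let $P=B$ as an ordered set partition of $[\ell(\alpha)]=[\ell(\gamma)]$. The sum condition on $B$ guarantees that the first $|B_1|$ parts of $\alpha$ sum to $\beta_1$, the next $|B_2|$ sum to $\beta_2$, and so on, so $\alpha \cleq \beta$ with $\ell(\alpha^{(j)})=|B_j|=|P_j|$; also $\widetilde{\alpha}=\widetilde{\gamma}=\lambda$, and $P \in \OSP(\alpha,\beta)$ by the block-size condition.

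Then I would describe the inverse $\Phi^{-1}:(\alpha,P)\mapsto(\gamma,B)$. Set $B=P$ and reconstruct $\gamma$ by placing the parts of $\alpha^{(j)}$ (in their natural order within $\alpha$) into the positions indexed by $P_j$ (in increasing order), for each $j$. The block-size matching $|P_j|=\ell(\alpha^{(j)})$ makes this well defined, and the recipe for $\alpha^{(j)}$ gives $\sum_{i\in B_j}\gamma_i=\sum\alpha^{(j)}=\beta_j$, so the sum condition holds; clearly $\widetilde{\gamma}=\lambda$. A quick check confirms $\Phi\circ\Phi^{-1}$ and $\Phi^{-1}\circ\Phi$ are identities.

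There is no real obstacle here beyond verifying that the two constructions are mutually inverse; the only subtlety is to track carefully that the reshuffling via $B$ produces a composition refining $\beta$ with the correct $\alpha^{(j)}$ structure, so that the block-size condition defining $\OSP(\alpha,\beta)$ is exactly what survives after the sum condition is absorbed into the reordering.
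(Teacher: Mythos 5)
Your proof is correct. The core combinatorial content coincides with the paper's: both proofs realize the identity as a bijection in which the parts of a rearrangement of $\lambda$ are regrouped according to the blocks of an ordered set partition, and indeed on examples the two bijections produce the same pair $(\alpha,P)$. Where you differ is in how you parse the left-hand side. The paper factors $\binom{\ell(\lambda)}{m_1,\ldots,m_k}R_{\lambda\beta}$ as $|\cA_\lambda|\cdot|\cO_{\lambda\beta}|$ with everything indexed relative to the fixed partition $\lambda$, and then must transport the set partition $B$ through the relabeling $\lambda\to\gamma(A)$ to get $C$ and act by the permutation $w_C^{-1}$ on subscripts to produce $\alpha$. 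You instead absorb the multinomial coefficient into a sum over rearrangements $\gamma$ of $\lambda$ and use $R_{\gamma\beta}=R_{\lambda\beta}$, so that the left side counts pairs $(\gamma,B)$ with $B$ already indexed relative to $\gamma$; the bijection then carries the ordered set partition over unchanged ($P=B$) and only reshuffles the composition by sorting parts block by block. This buys a noticeably cleaner inverse and eliminates the $w_C$ bookkeeping entirely, at the small cost of the (easy, and already noted in the paper) observation that $R_{\gamma\beta}$ depends only on the multiset of parts of $\gamma$. Both arguments are complete; yours is the more streamlined presentation of the same bijection.
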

\begin{proof}
Let $\lambda \vdash n$, and $m_i$ denote the number of parts of $\lambda$ of size $i$, so that $\ell(\lambda) = \sum_{i=1}^{\lambda_1} m_i$. 
We can model a composition $\alpha$ that rearranges $\lambda$ as an ordered set partition $(A_1,\ldots,A_{\lambda_1})$ of $\{1,\cdots,\ell(\lambda)\}$ where 
$A_i = \{j ~|~ \alpha_j = i\}$.
Thus, if 
$$\cA_\lambda 
	= \left\{\left. \begin{matrix}\text{ordered set partitions}\\\text{$(A_1,\ldots,A_{\lambda_1})$ of $\{1,\cdots,\ell(\lambda)\}$}\end{matrix} ~\right|  ~
		|A_i| = m_i \right\},
		$$
then the map
$$\gamma: \cA \to \{\alpha \vDash n ~|~ \tilde{\alpha} = \lambda\}$$ 
		defined by 
\begin{equation}
\gamma(A) \text{ is the composition with $\gamma(A)_j=i$ for all $j \in A_i$}
\label{eq:gamma(A)}
\end{equation}
is a natural bijection. 
Further, we have $|\mathcal{A}_\lambda|=\binom{\ell(\lambda)}{m_1,m_2,\ldots,m_{\lambda_1}}$.

Now, fix $\beta \vDash n$. Recall, we have 
$$\cO_{\lambda\beta}= \left\{\left. \begin{matrix}\text{ordered set partitions}\\\text{$(B_1,\cdots, B_{\ell(\beta)})$ of $\{1,\cdots,\ell(\lambda)\}$}\end{matrix} ~\right|  ~
		\beta_j=\sum_{i\in B_j}\lambda_i \text{ for } 1\leq j\leq \ell(\beta) \right\},
		$$
so that $|\cO_{\lambda\beta}| = \cR_{\lambda \beta}$ (Notation \ref{notn:R}).  For $\alpha\cleq\beta$, we have 
$$\OSP(\alpha, \beta) = \left\{\left. \begin{matrix}\text{ordered set partitions}\\\text{$(C_1,\cdots, C_{\ell(\beta)})$ of $\{1,\cdots,\ell(\alpha)\}$}\end{matrix} ~\right|  ~ |C_i| = \ell(\alpha^{(i)})\right\}$$
(Notation \ref{notn:OSP}). Informally, $\cO_{\lambda\beta}$ tells us how to build $\beta$ as combination of parts of $\lambda$; and $\OSP(\alpha, \beta)$ tells us a shuffle of a refinement $\alpha \cleq \beta$.

For an ordered set partition $P = (P_1, \dots, P_\ell)$ of $\{1, \dots, \ell(\lambda)\}$, let $p_1^{(i)}, \dots, p_{\ell(\alpha^{(i)})}^{(i)}$ be  the elements of $P_i$ written in increasing order. Define $w_{P}$ be the permutation (in one-line notation) given by
$$w_{P_i} = p_1^{(i)} \cdots p_{\ell}^{(i)}\qquad \text{ and } \qquad w_P = w_{p_1} \cdots w_{p_{\ell(\beta)}}.$$

We are now ready to construct a bijection 
$$g: \cA_\lambda \times \cO_{\lambda\beta} \to  \bigsqcup_{\substack{\alpha\cleq\beta\\\widetilde{\alpha}=\lambda}}\{(\alpha, C) ~|~ C \in \OSP(\alpha, \beta)\}.$$ 
See Example \ref{ex:OSP}.

Let $(A, B) \in \cA_\lambda \times \cO_{\lambda\beta}$. Initially, set $\alpha' = \gamma(A)$ (where $\gamma$ is the map in \eqref{eq:gamma(A)}), and set $C\in \cO_{\alpha'\beta}$ equal to the image of $B$ under the permutation of indices induced by $\lambda \to \gamma(A)$. Namely, if the $i$th part of $\lambda$ got placed into the $j$th part of $\alpha'$ (where parts of equal length are kept in the same relative order), then $i$ in $B$ is replaced by $j$ in $C$. Now, act by $w_C^{-1}$ on the subscripts of $\alpha'$ to get $\alpha$. The result is $\alpha \cleq \beta$, $\tilde{\alpha} = \lambda$, and $C \in \OSP(\alpha, \beta)$. Let $g((A,B)) = (\alpha, C)$. 

To see that this is a bijection, we show that each step in building $g((A,B))$ is invertible as follows. Take $\alpha \cleq \beta$ such that $\tilde{\alpha} = \lambda$, and let $C \in \OSP(\alpha, \beta)$. Let $\alpha'$ be the result of acting by $w_C$ on the subscripts of $\alpha$. Then we can recover $A = \gamma^{-1}(\alpha')$ from $\alpha'$; and $B$ is the image of $C$ under the permutation of indices induced by $\alpha' \to \lambda$. Namely, if the $j$th part of $\alpha'$  came from the $i$th part of $\lambda$ (where parts of equal length are kept in the same relative order), then $j$ in $C$ is replaced by $i$ in $B$. Then $A \in \cA_\lambda$, $B \in \cO_{\lambda\beta}$, and setting 
$g^{-1}((\alpha, C)) = (A,B)$ gives $g(g^{-1}((\alpha, C))) = (\alpha, C)$ and $g^{-1}(g((A,B)) = (A,B)$. 
\end{proof}

\begin{example}\label{ex:OSP} 
Fix $\lambda=(3,2,2,1,1,1,1)$ and $\beta = (5,1,4,1)$. So $m_1 = 4$, $m_2 = 2$, and $m_3 = 1$.

Now consider 
$$A = ((\{1,2,4,7\},\{3,6\},\{5\}) \in \mathcal{A}_{\lambda} \quad \text{ and } \quad 
B = (\{1,3\},\{4\},\{2,5,7\},\{6\}) \in  \cO_{\lambda\beta}.$$
Then $\alpha' = \gamma(A) = (1,1,2,1,3,2,1)$,  corresponding to the rearrangement
$$\TikZ{
\foreach \x in {1,2,...,7}{\coordinate (t\x) at (\x,1); \coordinate (b\x) at (\x,0); }
\node[above]  at (1,1) {$\begin{matrix}\lambda_1 \\ 3\end{matrix}$};
\node[above] at (2,1) {$\begin{matrix}\lambda_2 \\ 2\end{matrix}$};
\node[above] at (3,1) {$\begin{matrix}\lambda_3 \\ 2\end{matrix}$};
\node[above] at (4,1) {$\begin{matrix}\lambda_4 \\ 1\end{matrix}$};
\node[above] at (5,1) {$\begin{matrix}\lambda_5 \\ 1\end{matrix}$};
\node[above] at (6,1) {$\begin{matrix}\lambda_6 \\ 1\end{matrix}$};
\node[above] at (7,1) {$\begin{matrix}\lambda_7 \\ 1\end{matrix}$};
\node[below] at (1,0) {$\begin{matrix}1\\ \alpha'_1\end{matrix}$};
\node[below] at (2,0) {$\begin{matrix}1\\ \alpha'_2\end{matrix}$};
\node[below] at (3,0) {$\begin{matrix}2\\ \alpha'_3\end{matrix}$};
\node[below] at (4,0) {$\begin{matrix}1\\ \alpha'_4\end{matrix}$};
\node[below] at (5,0) {$\begin{matrix}3\\ \alpha'_5\end{matrix}$};
\node[below] at (6,0) {$\begin{matrix}2\\ \alpha'_6\end{matrix}$};
\node[below] at (7,0) {$\begin{matrix}1\\ \alpha'_7\end{matrix}$};
\draw[thick,->] (t1) to (b5);
\draw[thick,->] (t2) to (b3);
\draw[thick,->] (t3) to (b6);
\draw[thick,->] (t4) to (b1);
\draw[thick,->] (t5) to (b2);
\draw[thick,->] (t6) to (b4);
\draw[thick,->] (t7) to (b7);
}, \quad \text{which induces} \quad \TikZ{
\foreach \x in {1,2,...,7}{\node (t\x) at (\x,2) {$\x$}; \node (b\x) at (\x,0)  {$\x$}; }
\draw[thick,->] (t1) to (b5);
\draw[thick,->] (t2) to (b3);
\draw[thick,->] (t3) to (b6);
\draw[thick,->] (t4) to (b1);
\draw[thick,->] (t5) to (b2);
\draw[thick,->] (t6) to (b4);
\draw[thick,->] (t7) to (b7);
}.
$$
The image of $B$ under this induced map is $C =  (\{5,6\}, \{1\}, \{2, 3, 7\}, \{4\})$. So $w_C = 5612374$, and the image of $\alpha'$ under the action of $w_C^{-1}$ on subscripts is 
$$w_C^{-1} : \ \TikZ{
\foreach \x in {1,2,...,7}{\coordinate (t\x) at (\x,1); \coordinate (b\x) at (\x,0); }
\node[above]  at (1,1) {$\begin{matrix}\alpha'_1 \\ 1\end{matrix}$};
\node[above] at (2,1) {$\begin{matrix}\alpha'_2 \\ 1\end{matrix}$};
\node[above] at (3,1) {$\begin{matrix}\alpha'_3 \\ 2\end{matrix}$};
\node[above] at (4,1) {$\begin{matrix}\alpha'_4 \\ 1\end{matrix}$};
\node[above] at (5,1) {$\begin{matrix}\alpha'_5 \\ 3\end{matrix}$};
\node[above] at (6,1) {$\begin{matrix}\alpha'_6 \\ 2\end{matrix}$};
\node[above] at (7,1) {$\begin{matrix}\alpha'_7 \\ 1\end{matrix}$};
\node[below] at (1,0) {$\begin{matrix}3\\ \alpha_1\end{matrix}$};
\node[below] at (2,0) {$\begin{matrix}2\\ \alpha_2\end{matrix}$};
\node[below] at (3,0) {$\begin{matrix}1\\ \alpha_3\end{matrix}$};
\node[below] at (4,0) {$\begin{matrix}1\\ \alpha_4\end{matrix}$};
\node[below] at (5,0) {$\begin{matrix}2\\ \alpha_5\end{matrix}$};
\node[below] at (6,0) {$\begin{matrix}1\\ \alpha_6\end{matrix}$};
\node[below] at (7,0) {$\begin{matrix}1\\ \alpha_7\end{matrix}$};
\draw[thick,->] (t5) to (b1);
\draw[thick,->] (t6) to (b2);
\draw[thick,->] (t1) to (b3);
\draw[thick,->] (t2) to (b4);
\draw[thick,->] (t3) to (b5);
\draw[thick,->] (t7) to (b6);
\draw[thick,->] (t4) to (b7);
}.$$
And, indeed, we see that $\alpha = (3,2,1,1,2,1,1) \cleq \beta$ and $C \in \OSP(\alpha, \beta)$.

We can see why it is necessary to record $\alpha$ as follows. For example, we consider inverting $g$ on the same $C$ as above, but now paired with $\alpha = (3,2,1,2,1,1,1)$.  Then the image of $\alpha$ under the action of $w_C$ on subscripts is 
$$w_C: \ \TikZ{
\foreach \x in {1,2,...,7}{\coordinate (t\x) at (\x,1); \coordinate (b\x) at (\x,0); }
\node[above]  at (1,1) {$\begin{matrix}\alpha_1 \\ 3\end{matrix}$};
\node[above] at (2,1) {$\begin{matrix}\alpha_2 \\ 2\end{matrix}$};
\node[above] at (3,1) {$\begin{matrix}\alpha_3 \\ 1\end{matrix}$};
\node[above] at (4,1) {$\begin{matrix}\alpha_4 \\ 2\end{matrix}$};
\node[above] at (5,1) {$\begin{matrix}\alpha_5 \\ 1\end{matrix}$};
\node[above] at (6,1) {$\begin{matrix}\alpha_6 \\ 1\end{matrix}$};
\node[above] at (7,1) {$\begin{matrix}\alpha_7 \\ 1\end{matrix}$};
\node[below] at (1,0) {$\begin{matrix}1\\ \alpha'_1\end{matrix}$};
\node[below] at (2,0) {$\begin{matrix}2\\ \alpha'_2\end{matrix}$};
\node[below] at (3,0) {$\begin{matrix}1\\ \alpha'_3\end{matrix}$};
\node[below] at (4,0) {$\begin{matrix}1\\ \alpha'_4\end{matrix}$};
\node[below] at (5,0) {$\begin{matrix}3\\ \alpha'_5\end{matrix}$};
\node[below] at (6,0) {$\begin{matrix}2\\ \alpha'_6\end{matrix}$};
\node[below] at (7,0) {$\begin{matrix}1\\ \alpha'_7\end{matrix}$};
\draw[thick,->] (t1) to (b5);
\draw[thick,->] (t2) to (b6);
\draw[thick,->] (t3) to (b1);
\draw[thick,->] (t4) to (b2);
\draw[thick,->] (t5) to (b3);
\draw[thick,->] (t6) to (b7);
\draw[thick,->] (t7) to (b4);
}.$$
So $A = (\{1,3,4,7\}, \{2,6\}, \{5\})$, which is different from the $A$ we started with above. The set $B$, though, is left unchanged.
\end{example}

\section{Relationships between bases}{\label{sec:btw}}
 
\subsection{The relationship between the type 1 and type 2 quasisymmetric power sums} 
 
To determine the relationship between the two different types of quasisymmetric power sums, we first use duality to expand the monomial quasisymmetric functions in terms of the type 2 quasisymmetric power sums.   Thus, from \eqref{eq:htopsi} and duality we obtain $$M_{\beta} = \sum_{\alpha \cgeq \beta} (-1)^{\ell(\beta)-\ell(\alpha)} \frac{\Pi_i \alpha_i}{\ell(\beta,\alpha)} \Phi_{\alpha}.$$

Then we expand the type 1 quasisymmetric power sums in terms of the monomial quasisymmetric functions \eqref{eq:PsiM} and apply substitution to obtain the following expansion of the type 1 quasisymmetric power sums into the type 2 quasisymmetric power sums:
 
 \begin{align*}
\Psi_{\alpha} &= \sum_{\beta \cgeq \alpha} \frac{z_{\alpha}}{\pi(\alpha, \beta)}M_{\beta} \\
 &= \sum_{\beta \cgeq \alpha} \frac{z_{\alpha}}{\pi(\alpha, \beta)} \sum_{\gamma\cgeq\beta} (-1)^{\ell(\beta)-\ell(\gamma)} \frac{\Pi_i \gamma_i}{\ell(\beta, \gamma)} \Phi_{\gamma} \\
 &= \sum_{\alpha \cleq \beta \cleq \gamma} (-1)^{\ell(\beta)-\ell(\gamma)} \frac{z_{\alpha} \Pi_i \gamma_i}{\pi(\alpha, \beta) \ell(\beta, \gamma)} \Phi_{\gamma}.
 \end{align*}
 
A similar process produces

\begin{align*}
\Phi_{\alpha} &= \sum_{\beta \cgeq \alpha} \frac{z_\alpha}{sp(\alpha,\beta)} M_{\beta} \\
&= \sum_{\beta \cgeq \alpha} \frac{z_\alpha}{sp(\alpha,\beta)} \sum_{\gamma\cgeq\beta} (-1)^{\ell(\beta)-\ell(\gamma)} lp(\beta, \gamma) \Psi_{\gamma} \\
&= \sum_{\alpha \cleq \beta \cleq \gamma} (-1)^{\ell(\beta)-\ell(\gamma)} \frac{z_\alpha lp(\beta, \gamma)}{sp(\alpha, \beta)} \Psi_{\gamma}.
\end{align*}
\subsection{The relationship between monomial and fundamental quasisymmetric functions}  Our next goal is to give the ``cleanest'' possible interpretation of the  expansions of the quasisymmetric power sums in the fundamental basis.  Towards this goal we first establish a more basic relationship between the $F$ basis and certain sums of monomials.

\begin{notn}[$\alpha^c$, $\alpha\wedge\beta$, $\alpha\vee\beta$]

 Given $\alpha\vDash n$, let $\alpha^c=\comp((\set(\alpha))^c)$.  Given a second composition $\beta$, $\alpha\wedge\beta$ denotes the finest (i.e.\ with the smallest parts) composition $\gamma$ such that $\gamma\cgeq\alpha$ and $\gamma\cgeq \beta$.  Similarly, $\alpha\vee \beta$ denotes the coarsest composition $\delta$ such that $\delta\cleq\alpha$ and $\delta \cleq \beta$.
 \end{notn} \begin{example} If $\alpha=(2,3,1)$ and $\beta=(1,2,2,1)$, then $\alpha^c=(1,2, 1,2), \alpha\wedge\beta=(5,1)$, and $\alpha\vee \beta=(1,1,1,2,1)$. 
  \end{example}
  
  \noindent The notation is motivated by the poset of sets ordered by containment (when combined with the bijection from sets to compositions). We note that $\set(\alpha\wedge\beta)=\set(\alpha) \cap \set(\beta)$ and 
$\set(\alpha \vee \beta)=\set(\alpha)\cup \set(\beta)$.

 We begin by writing the sum (over an interval in the refinement partial order) of quasisymmetric monomial functions as a sum of distinct fundamental quasisymmetric functions.      
\begin{lemma}\label{lem:moninterval}
Let $\alpha,\beta\vDash n$ with $\alpha\cleq\beta$.  Then 
\[\sum_{\delta: \alpha\cleq\delta\cleq\beta} M_\delta = \sum_{\beta\vee \alpha^c\cleq\delta\cleq\beta}(-1)^{\ell(\beta)-\ell(\delta)}F_\delta.\]
\end{lemma}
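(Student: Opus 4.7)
The plan is to translate the identity into the language of subsets of $[n-1]$ via the bijection $\set$, apply the $M$-to-$F$ expansion on the left-hand side, and then swap the order of summation to compute the coefficient of each $F_\delta$.

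First I would set $S=\set(\alpha)$ and $T=\set(\beta)$. Since $\alpha\cleq\beta$, we have $T\subseteq S\subseteq[n-1]$, and the interval $\alpha\cleq\delta\cleq\beta$ corresponds bijectively to $T\subseteq \set(\delta)\subseteq S$. Using \eqref{M-F} and the identity $\ell(\gamma)=|\set(\gamma)|+1$ for $\gamma\vDash n$, the monomial-to-fundamental expansion reads
$$M_{\comp(U)}=\sum_{U\subseteq V\subseteq [n-1]}(-1)^{|V|-|U|}F_{\comp(V)}.$$
Substituting this into the left-hand side and swapping summation order gives
$$\sum_{T\subseteq U\subseteq S}M_{\comp(U)}=\sum_{V\subseteq[n-1]}F_{\comp(V)}\left(\sum_{\substack{T\subseteq U\subseteq S\\ U\subseteq V}}(-1)^{|V|-|U|}\right).$$

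Next I would evaluate the inner sum. It is empty unless $T\subseteq V$, in which case setting $U'=U\setminus T$ and $W=(S\cap V)\setminus T$ reduces it to $(-1)^{|V|-|T|}\sum_{U'\subseteq W}(-1)^{|U'|}$, which vanishes unless $W=\emptyset$. Since $T\subseteq S$, the condition $W=\emptyset$ is equivalent to $V\cap S=T$, i.e.\ $V\setminus T\subseteq S^c$, i.e.\ $V\subseteq T\cup S^c$. Combined with $T\subseteq V$, this pins down exactly the $V$ with nonzero contribution, and the surviving sign is $(-1)^{|V|-|T|}=(-1)^{\ell(\delta)-\ell(\beta)}$ for $\delta=\comp(V)$.

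Finally, I would translate back to compositions using $\set(\beta\vee\alpha^c)=\set(\beta)\cup\set(\alpha^c)=T\cup S^c$, as noted in the preliminaries. Then $V\subseteq T\cup S^c$ is exactly $\comp(V)\cgeq \beta\vee\alpha^c$, and $T\subseteq V$ is exactly $\comp(V)\cleq\beta$; so the surviving $\delta$'s are precisely those with $\beta\vee\alpha^c\cleq\delta\cleq\beta$, yielding the right-hand side. The main obstacle is purely bookkeeping: keeping the dictionary straight between refinement of compositions and inclusion of descent sets (which are reverse to each other), and checking that the claim $\set(\beta\vee\alpha^c)=T\cup S^c$ is applied with the correct union/intersection conventions. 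Everything else is a standard inclusion-exclusion collapse.
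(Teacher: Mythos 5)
Your proof is correct and follows essentially the same route as the paper: expand each $M_\delta$ into fundamentals, swap the order of summation, and collapse the inner alternating sum over a Boolean interval (the paper phrases this via the M\"obius function of the subset lattice; you carry it out explicitly after translating to subsets of $[n-1]$). The bookkeeping with $\set(\beta\vee\alpha^c)=T\cup S^c$ and the refinement/inclusion reversal is handled correctly, so no changes are needed.
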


\begin{proof}
Let $\alpha, \beta\vDash n$ with $\alpha \cleq \beta$.  Then 
\begin{align}
\sum_{\alpha\cleq\delta\cleq\beta}M_\delta & =\sum_{\alpha\cleq\delta\cleq\beta}\sum_{\gamma\cleq\delta}(-1)^{\ell(\gamma)-\ell(\delta)}F_\gamma\nonumber\\
&=\sum_{\gamma\cleq\beta}(-1)^{\ell(\gamma)}F_\gamma\left(\sum_{\alpha\wedge\gamma\cleq\delta\cleq\beta}(-1)^{\ell(\delta)}\right).\label{eq:fexpansion}
\end{align}

Recall (see \cite{Sta99v1}) the M\"obius function for the lattice of subsets of size $n-1$, ordered by set inclusion.  If $S$ and $T$ are subsets of an $n-1$ element set with $T \subseteq S$, then $\mu(T,S)=(-1)^{|S-T|}$ and $(-1)^{|S-T|}= - \sum_{T\subseteq U\subset S} \mu(T,U)$. Thus, since compositions of $n$ are in bijection with subsets of $[n-1]$ and $\ell(\delta)=|\set(\delta)|+1$, when $\alpha\wedge\gamma \neq \beta$, we can write 
\begin{align*}
\sum_{\alpha\wedge\gamma\cleq\delta\cleq\beta} (-1)^{\ell(\delta)} &= (-1)^{\ell(\alpha\wedge\gamma)}+(-1)^{\ell(\beta)}\sum_{\alpha\wedge\gamma\prec\delta\cleq\beta}(-1)^{\ell(\delta)-\ell(\beta)} \\
&=(-1)^{\ell(\alpha\wedge\gamma)}+(-1)^{\ell(\beta)}\sum_{\alpha\wedge\gamma\prec\delta\cleq\beta}\mu(\set(\beta),\set(\delta)) \\
&=(-1)^{\ell(\alpha\wedge\gamma)}+(-1)^{\ell(\beta)+1}\mu(\set(\beta), \set(\alpha\wedge\gamma)) \\
&=(-1)^{\ell(\alpha\wedge\gamma)}+(-1)^{\ell(\beta)+1}(-1)^{\ell(\alpha\wedge\gamma)-\ell(\beta)}\\
&=0.\end{align*}

We can now rewrite \eqref{eq:fexpansion} as
\[\sum_{\gamma\cleq\beta}(-1)^{\ell(\gamma)}F_\gamma\left(\sum_{\alpha\wedge\gamma\cleq\delta\cleq\beta}(-1)^{\ell(\delta)}\right)=\sum_{\substack{\gamma\cleq\beta\\\beta=\gamma\wedge\alpha}} (-1)^{\ell(\gamma)+\ell(\alpha\wedge\gamma)}F_\gamma=\sum_{ \alpha^c\vee\beta \cleq \gamma\cleq \beta}(-1)^{\ell(\gamma)-\ell(\beta)}F_\gamma.\qedhere\]
\end{proof}

\subsection{The relationship between type 1 quasisymmetric power sums and fundamental quasisymmetric functions}

Recall Notation \ref{notn:hat} for the following.
\begin{thm}\label{thm:psitoF}
Let $\alpha\vDash n$.  Then 
\[\Psi_\alpha = \frac{z_\alpha}{n!}\sum_{\gamma\cgeq\alpha}|\{\sigma\in\Cons_\alpha: \widehat{\alpha(\sigma)}=\gamma\}|\sum_{\eta\cgeq \alpha^c}(-1)^{\ell(\eta)-1}F_{\gamma\vee\eta}.\]
\end{thm}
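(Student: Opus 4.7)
My plan is to begin from the intermediate identity
$$\Psi_\alpha=\frac{z_\alpha}{n!}\sum_{\sigma\in \Cons_\alpha}\sum_{\alpha\cleq\delta\cleq \widehat{\alpha(\sigma)}}M_\delta$$
that already appears in the proof of Theorem~\ref{thm:combPsi} as equation~\eqref{eq:unscaled}. Regrouping the outer sum according to the value $\gamma=\widehat{\alpha(\sigma)}$ immediately gives
$$\Psi_\alpha=\frac{z_\alpha}{n!}\sum_{\gamma\cgeq\alpha}|\{\sigma\in\Cons_\alpha:\widehat{\alpha(\sigma)}=\gamma\}|\sum_{\alpha\cleq\delta\cleq\gamma}M_\delta.$$
Applying Lemma~\ref{lem:moninterval} (with its $\beta$ specialized to $\gamma$) converts the inner monomial sum into
$$\sum_{\gamma\vee\alpha^c\cleq\delta\cleq\gamma}(-1)^{\ell(\gamma)-\ell(\delta)}F_\delta,$$
which nearly matches the target, but indexed over an interval in the refinement order rather than over coarsenings of $\alpha^c$.

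The remaining step, and the main substantive point, is to reindex this interval sum by $\eta\cgeq\alpha^c$. The key observation is that $\gamma\cgeq\alpha$ forces $\set(\gamma)\subseteq\set(\alpha)$, while $\eta\cgeq\alpha^c$ forces $\set(\eta)\subseteq\set(\alpha^c)=[n-1]\setminus\set(\alpha)$, so $\set(\gamma)$ and $\set(\eta)$ are automatically disjoint. Consequently $\set(\gamma\vee\eta)=\set(\gamma)\sqcup\set(\eta)$, and this single observation gives both
\begin{enumerate}[(a)]
\item a bijection $\eta\mapsto\gamma\vee\eta$ from $\{\eta:\eta\cgeq\alpha^c\}$ onto $\{\delta:\gamma\vee\alpha^c\cleq\delta\cleq\gamma\}$, with inverse $\delta\mapsto\comp(\set(\delta)\setminus\set(\gamma))$; and
\item the length identity $\ell(\gamma\vee\eta)=\ell(\gamma)+\ell(\eta)-1$, so that $(-1)^{\ell(\gamma)-\ell(\delta)}=(-1)^{\ell(\eta)-1}$.
\end{enumerate}
Substituting (a) and (b) into the inner $F$-sum yields exactly $\sum_{\eta\cgeq\alpha^c}(-1)^{\ell(\eta)-1}F_{\gamma\vee\eta}$, which completes the proof.

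I do not expect any serious obstacle: once one notices the disjointness of $\set(\gamma)$ and $\set(\eta)$, the verifications of (a) and (b) are routine bookkeeping on subsets of $[n-1]$ via the bijection $\alpha\leftrightarrow\set(\alpha)$. The only mildly subtle point is that this reindexing truly requires the hypothesis $\gamma\cgeq\alpha$ (which is where the restriction $\gamma=\widehat{\alpha(\sigma)}$ is essential); without it, the bijection of (a) would fail to land in the correct interval.
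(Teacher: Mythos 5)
Your proposal is correct and follows essentially the same route as the paper: start from the intermediate identity $\Psi_\alpha=\frac{z_\alpha}{n!}\sum_{\sigma\in\Cons_\alpha}\sum_{\alpha\cleq\delta\cleq\widehat{\alpha(\sigma)}}M_\delta$, apply Lemma~\ref{lem:moninterval}, group by $\gamma=\widehat{\alpha(\sigma)}$, and reindex the interval $\{\delta:\gamma\vee\alpha^c\cleq\delta\cleq\gamma\}$ by $\eta\cgeq\alpha^c$ via $\delta=\gamma\vee\eta$, using $\set(\gamma)\cap\set(\eta)=\emptyset$ to get $\ell(\gamma\vee\eta)=\ell(\gamma)+\ell(\eta)-1$. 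Your explicit inverse $\delta\mapsto\comp(\set(\delta)\setminus\set(\gamma))$ and the remark that disjointness hinges on $\gamma\cgeq\alpha$ make the reindexing step slightly more explicit than the paper's ``straightforward to check,'' but the argument is the same.
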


\begin{proof}
Let $\alpha\vDash n$.  We use $\one_{\mathcal{R}}$ to denote the characteristic function of the relation $\mathcal{R}$.

Combining the quasisymmetric monomial expansion of $\Psi_\alpha$ given in \eqref{eq:PsiM}, Lemma~\ref{lem:cons}, and Lemma~\ref{lem:moninterval}, we have
\begin{align*}
\Psi_\alpha &=\frac{z_\alpha}{n!}\sum_{\alpha\cleq\beta}|\Cons_{\alpha\cleq\beta}|M_\beta\\
&=\frac{z_\alpha}{n!}\sum_{\sigma\in \Cons_\alpha}\sum_{\alpha\cleq\delta\cleq \widehat{\alpha(\sigma)}}M_\delta \\
&=\frac{z_\alpha}{n!}\sum_{\sigma\in\Cons_\alpha}\sum_{\alpha^c\vee \widehat{\alpha(\sigma)}\cleq\delta\cleq\widehat{\alpha(\sigma)}}(-1)^{\ell(\widehat{\alpha(\sigma)})-\ell(\delta)}F_\delta \textrm{ (by Lemma~\ref{lem:moninterval})} \\
&=\frac{z_\alpha}{n!}\sum_{\delta\vDash n}(-1)^{\ell(\delta)}F_\delta \sum_{\sigma\in\Cons_\alpha}(-1)^{\ell(\widehat{\alpha(\sigma)})}\one_{\alpha^c\vee\widehat{\alpha(\sigma)}\cleq \delta \cleq \widehat{\alpha(\sigma)}}\\
&=\frac{z_\alpha}{n!}\sum_{\gamma\cgeq\alpha}|\{\sigma\in\Cons_\alpha:\widehat{\alpha(\sigma)}=\gamma\}|\sum_{\delta \vDash n}(-1)^{\ell(\gamma)-\ell(\delta)}F_\delta \one_{\alpha^c\vee\gamma\cleq \delta\cleq\gamma},  
\end{align*}
with the last equality holding since the  compositions $\widehat{\alpha(\sigma)}$ are coarsening of $\alpha$.  It is straightforward to check that given $\gamma \cgeq \alpha$ and $\delta \vDash n$, there exists $\eta \cgeq \alpha^c$ such that $\delta=\gamma\vee\eta$ if and only if $\delta \cgeq \alpha^c\vee\gamma$.  Then 
\begin{align*}
\Psi_\alpha&=\frac{z_\alpha}{n!}\sum_{\gamma\cgeq\alpha}|\{\sigma\in \Cons_\alpha: \widehat{\alpha(\sigma)}=\gamma\}|(-1)^{\ell(\gamma)}\sum_{\eta\cgeq\alpha^c}(-1)^{\ell(\gamma\vee\eta)}F_{\gamma\vee\eta}\\
&= \frac{z_\alpha}{n!}\sum_{\gamma\cgeq\alpha}|\{\sigma\in\Cons_\alpha: \widehat{\alpha(\sigma)}=\gamma\}|\sum_{\eta\cgeq \alpha^c}(-1)^{\ell(\eta)-1}F_{\gamma\vee\eta}.
\end{align*}
The final equality is established by noting that $\set(\gamma)\cap\set(\eta)=\emptyset$, so $\ell(\gamma\vee\eta)=|\set(\gamma\vee\eta)|+1=|\set(\gamma)|+|\set(\eta)|+1=\ell(\gamma)+\ell(\eta)-1$. 
\end{proof}
\begin{note}The $F_{\gamma \vee \eta}$'s are distinct in this sum, meaning the coefficient of $F_\delta$ is either 0 or is $$|\{ \sigma ~|~ \widehat{\alpha(\sigma)} = \gamma \}| (-1)^{\ell(\eta)-1}$$ when $\delta = \gamma \vee \eta$ for $\gamma \cgeq \alpha$ and $\alpha^c \cleq \eta$.  This follows from the fact that we can recover $\gamma$ and $\eta$ from $\gamma \vee \eta$ and $\alpha$, with $$\gamma=\comp(\set(\gamma \vee \eta)\cap \set(\alpha)),$$  $$\eta=\comp(\set(\gamma \vee \eta)\cap \set(\alpha)^c).$$  
\end{note}

\subsection{The relationship between type 2 quasisymmetric power sums and fundamental quasisymmetric functions}
The expansion of $\Phi_\alpha$ into fundamental quasisymmetric functions is somewhat more straightforward.  
Let $m_i$ denote the number of parts of $\alpha \vDash n$ that have size $i$.  

\begin{thm}\label{thm:phitoF}
Let $\alpha\vDash n$.  Then 
\[\Phi_\alpha=\binom{m_1+\cdots+m_n}{m_1,\ldots,m_n}^{-1}\sum_{\gamma\vDash n} \left(\sum_{\beta\cgeq(\gamma\wedge \alpha)} (-1)^{\ell(\gamma)-\ell(\beta)}|\OSP(\alpha,\beta)|\right) F_{\gamma}.\]
\end{thm}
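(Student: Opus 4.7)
The plan is to start from the monomial-basis formula for $\Phi_\alpha$ given in Theorem~\ref{thm:power2} and then substitute the standard expansion of $M_\beta$ in the fundamental basis from~\eqref{M-F}, namely $M_\beta = \sum_{\gamma \cleq \beta}(-1)^{\ell(\gamma)-\ell(\beta)}F_\gamma$. After swapping the order of summation and collecting the coefficient of each $F_\gamma$, the only thing to verify is that the inner index set has the advertised description.

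More concretely, I would proceed as follows. Starting from
\[\Phi_\alpha = \binom{\ell(\alpha)}{m_1,\ldots,m_k}^{-1}\sum_{\beta\cgeq\alpha}|\OSP(\alpha,\beta)|\,M_\beta,\]
I would substitute for $M_\beta$ and obtain the double sum
\[\Phi_\alpha = \binom{\ell(\alpha)}{m_1,\ldots,m_k}^{-1}\sum_{\gamma\vDash n}F_\gamma \sum_{\substack{\beta\cgeq\alpha \\ \beta \cgeq \gamma}}(-1)^{\ell(\gamma)-\ell(\beta)}|\OSP(\alpha,\beta)|.\]
The next step is to note that $\ell(\alpha)=m_1+m_2+\cdots+m_n$, which matches the prefactor in the statement.

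The main (and only real) obstacle is rewriting the inner summation index. The condition $\beta\cgeq\alpha$ and $\beta\cgeq\gamma$ is exactly $\beta\cgeq\alpha\wedge\gamma$, since $\alpha\wedge\gamma$ is by definition the finest common coarsening of $\alpha$ and $\gamma$ (equivalently, $\set(\alpha\wedge\gamma)=\set(\alpha)\cap\set(\gamma)$, and the coarsening order corresponds to reverse containment of subsets of $[n-1]$). In one direction, if $\beta\cgeq\alpha\wedge\gamma$ then by transitivity $\beta\cgeq\alpha$ and $\beta\cgeq\gamma$; in the other, any common coarsening must coarsen the finest such. Moreover, by Notation~\ref{notn:OSP}, $|\OSP(\alpha,\beta)|=0$ when $\alpha\not\cleq\beta$, so no extra constraint is lost by replacing the joint condition with the single condition $\beta\cgeq\alpha\wedge\gamma$.

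Combining these observations yields
\[\Phi_\alpha=\binom{m_1+\cdots+m_n}{m_1,\ldots,m_n}^{-1}\sum_{\gamma\vDash n}\left(\sum_{\beta\cgeq(\gamma\wedge\alpha)}(-1)^{\ell(\gamma)-\ell(\beta)}|\OSP(\alpha,\beta)|\right)F_\gamma,\]
which is the stated formula. This proof is essentially a bookkeeping exercise once the $M$-expansion of Theorem~\ref{thm:power2} is in hand; unlike the Type 1 case (Theorem~\ref{thm:psitoF}), no M\"obius inversion or appeal to Lemma~\ref{lem:moninterval} is needed, because the sum over $\beta$ in Theorem~\ref{thm:power2} is already indexed by a single coarsening condition rather than a nontrivial interval.
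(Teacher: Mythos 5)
Your proposal is correct and follows essentially the same route as the paper: substitute the fundamental expansion of $M_\beta$ into the monomial expansion of $\Phi_\alpha$ from Theorem~\ref{thm:power2}, interchange the sums, and identify the joint condition $\beta\cgeq\alpha$, $\beta\cgeq\gamma$ with the single condition $\beta\cgeq\gamma\wedge\alpha$ via $\set(\gamma\wedge\alpha)=\set(\gamma)\cap\set(\alpha)$. Your extra remarks (the lattice argument for the index-set equality, and the observation that $|\OSP(\alpha,\beta)|=0$ when $\alpha\not\cleq\beta$, which is in fact not even needed since $\beta\cgeq\gamma\wedge\alpha$ already implies $\beta\cgeq\alpha$ by transitivity) only make explicit what the paper leaves implicit.
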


\begin{proof}
Let $\alpha\vDash n$.  Combining the quasisymmetric monomial expansion of $\Phi_\alpha$ and the fundamental expansion of $M_\beta$, gives 
\begin{align}
\Phi_\alpha&=\binom{m_1+\cdots+m_n}{m_1,\ldots,m_n}^{-1}\sum_{\beta\cgeq\alpha} |\OSP(\alpha,\beta)|M_\beta\\
&=\binom{m_1+\cdots+m_n}{m_1,\ldots,m_n}^{-1}\sum_{\beta\cgeq\alpha} |\OSP(\alpha,\beta)|\sum_{\beta\cgeq\gamma}(-1)^{\ell(\gamma)-\ell(\beta)}F_\gamma\\
&=\binom{m_1+\cdots+m_n}{m_1,\ldots,m_n}^{-1}\sum_{\gamma\vDash n}F_\gamma \left(\sum_{\beta\cgeq(\gamma\wedge \alpha)} (-1)^{\ell(\gamma)-\ell(\beta)}|\OSP(\alpha,\beta)|\right).\qedhere
\end{align}
\end{proof}
\subsection{The antipode map on quasisymmetric power sums}\label{sec:omega}
\begin{defn}[transpose, $\alpha^r,\alpha^t$] Let $\alpha^r$ give the reverse of $\alpha$. Then we call $\alpha^t=(\alpha^c)^r$ the  transpose of the composition $\alpha$. 
\end{defn}
The antipode map $S: \NS\rightarrow\NS$ on the Hopf algebra of quasisymmetric functions is commonly defined by $S(F_\alpha)=(-1)^{|\alpha|}F_{\alpha^t}$.  On the noncommutative functions, it is commonly defined as the automorphism such that $S(\e_n)=(-1)^n\h_n$.  It can equivalently be defined by $S(\r_\alpha)=(-1)^{|\alpha|}\r_{\alpha^t}$. Thus, for $f$ in $\QS$ and $g\in \NS$,  $$\<f,g\>=\<S(f),S(g)\>.$$  It can be easier to compute $S$ on a multiplicative basis, such as $\p$ or $\ptwo$ and then use duality to establish the result on the quasisymmetric side.  

We start with the expansion of the $\p$ and $\ptwo$ in terms of the $\e$ basis in \cite[\S 4.5]{GKLLRT94}:
\begin{equation}\label{eq:powerine}
\p_n = \sum_{\alpha\vDash n} (-1)^{n-\ell(\alpha)}\alpha_1 \e_\alpha.
\end{equation}
It follows from \eqref{eq:powerinh} and \eqref{eq:powerine} that $S(\p_n)=-\p_n$.  Then 
\begin{align*}
S(\p_{\alpha})&=S(\p_{\alpha_1}\p_{\alpha_2}\cdots \p_{\alpha_k})\\
&=S(\p_{\alpha_k})S(\p_{\alpha_{k-1}})\cdots S(\p_{\alpha_1})\\
&=\left(-\p_{\alpha_k}\right)\cdots \left(-\p_{\alpha_1}\right)\\
&=(-1)^{\ell(\alpha)}\p_{\alpha^r}.
\end{align*}
 This result also follows from the fact that $\Psi$ is a primitive element.

\begin{thm}\label{thm:omega}
For $\alpha \vDash n$, $S(\Psi_\alpha) = (-1)^{\ell(\alpha)}\Psi_{\alpha^r}.$
\end{thm}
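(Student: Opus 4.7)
The plan is to reduce the claim to the just-computed formula $S(\p_\alpha) = (-1)^{\ell(\alpha)} \p_{\alpha^r}$ via the duality between $\QS$ and $\NS$. Two ingredients are in hand: the general identity $\<f, g\> = \<S(f), S(g)\>$ recalled at the start of Section~\ref{sec:omega}, and the fact that the definition $\<\Psi_\gamma, \p_\beta\> = z_\gamma \delta_{\gamma, \beta}$ makes $\{\Psi_\gamma / z_\gamma\}$ the $\QS$-basis dual to $\{\p_\gamma\}$.

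The first step is to compute $\<S(\Psi_\alpha), \p_\gamma\>$ for each $\gamma \vDash |\alpha|$. Applying $\<f,g\> = \<S(f), S(g)\>$ with $f = \Psi_\alpha$ and $g = \p_{\gamma^r}$, and then using $S(\p_{\gamma^r}) = (-1)^{\ell(\gamma^r)} \p_\gamma = (-1)^{\ell(\gamma)}\p_\gamma$, yields
\[
z_\alpha \delta_{\alpha, \gamma^r} \;=\; \<\Psi_\alpha, \p_{\gamma^r}\> \;=\; \<S(\Psi_\alpha), S(\p_{\gamma^r})\> \;=\; (-1)^{\ell(\gamma)} \<S(\Psi_\alpha), \p_\gamma\>.
\]
Hence $\<S(\Psi_\alpha), \p_\gamma\>$ vanishes unless $\gamma = \alpha^r$, in which case it equals $(-1)^{\ell(\alpha)} z_\alpha$.

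The second step is to recover $S(\Psi_\alpha)$ by expanding against the dual basis. Since $S$ preserves degree, $S(\Psi_\alpha)$ is a finite combination of the $\Psi_\gamma$, and
\[
S(\Psi_\alpha) \;=\; \sum_{\gamma} \frac{\<S(\Psi_\alpha), \p_\gamma\>}{z_\gamma} \Psi_\gamma \;=\; \frac{(-1)^{\ell(\alpha)} z_\alpha}{z_{\alpha^r}} \Psi_{\alpha^r} \;=\; (-1)^{\ell(\alpha)} \Psi_{\alpha^r},
\]
where the final equality uses $z_{\alpha^r} = z_{\widetilde{\alpha^r}} = z_{\widetilde\alpha} = z_\alpha$, since $\alpha$ and $\alpha^r$ share a multiset of parts.

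There is essentially no obstacle: duality converts the already-established $S$-action on the multiplicative basis $\p_\alpha$ of $\NS$ directly into the $S$-action on the dual basis of $\QS$. The only care needed is in tracking the sign $(-1)^\ell$ and in observing that reversal preserves both the length statistic and the scalar $z$. The same scheme would produce an analogous formula for $S(\Phi_\alpha)$ once the parallel calculation of $S(\ptwo_\alpha)$ is available.
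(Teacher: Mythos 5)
Your proposal is correct and follows essentially the same route as the paper: apply the $S$-invariance of the pairing together with $S(\p_\beta) = (-1)^{\ell(\beta)}\p_{\beta^r}$ to compute $\<S(\Psi_\alpha),\p_\gamma\>$, then read off the expansion in the dual basis. Your version merely re-indexes ($\gamma^r$ in place of $\beta$) and spells out the final dual-basis expansion and the check $z_{\alpha^r}=z_\alpha$, which the paper leaves implicit.
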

\begin{proof}
Let $\alpha \vDash n$.  Then
\[z_\alpha \delta_{\alpha, \beta}=\<\Psi_\alpha, \p_\beta\>=\<S(\Psi_\alpha), S(\p_\beta)\>=\<S(\Psi_\alpha),(-1)^{\ell(\beta)}\p_{\beta^r} \>=\<(-1)^{\ell(\beta)}S(\Psi_\alpha),\p_{\beta^r} \>,\]
so $S(\Psi_\alpha)=(-1)^{\ell(\alpha)}\Psi_{(\alpha)^r}$.
\end{proof}
Similarly, we have that $$S(\Phi_\alpha)=(-1)^{\ell(\alpha)}\Psi_{\alpha^r}.$$

There are considerable notational differences between various authors on the names of the well known automorphisms of $\QS$ and $\NS$, in part because there are two natural maps which descend to the well known automorphism $\omega$ in the symmetric functions.  Following both \cite{GKLLRT94} and \cite{LMvW}, we use $\omega(\e_n)=\h_n$ (where $\omega$ is an anti-automorphism) and $\omega(F_\alpha)=F_{\alpha^t}$ to define (one choice of) a natural analogue of the symmetric function case.  We can see, from the definition of $\omega$ and $S$ on the elementary symmetric functions, that the two maps vary by only a sign on homogeneous polynomials of a given degree.  In particular, 
$$\omega(\Psi_\alpha)=(-1)^{|\alpha|-\ell(\alpha)}\Psi_{\alpha^r},$$
$$\omega(\Phi_\alpha)=(-1)^{|\alpha|-\ell(\alpha)}\Phi_{\alpha^r}.$$
\section{Products of quasisymmetric power sums}\label{sec:products}

In contrast to the symmetric power sums, the quasisymmetric power sums are not a multiplicative basis.  This is immediately evident from the fact that $\Psi_{(n)}=p_{(n)}=\Phi_{(n)}$ but the quasisymmetric power sum basis is not identical to the symmetric power sums.    Thus the product of two elements of either power sum basis is more complex in the quasisymmetric setting than the symmetric setting.

\subsection{Products of type 1 quasisymmetric power sums}{\label{sec:product}}

We can exploit the duality of comultiplication in $\NS$ and multiplication in $\QS$. 
\begin{defn}[shuffle, $\shuffle$]
Let $[a_1,\cdots ,a_n]\shuffle[b_1,\cdots,b_m]$ give the set of shuffles of $[a_1,\cdots ,a_n]$ and $[b_1,\cdots,b_n]$; that is the set of all length $m+n$ words without repetition on  $
\{a_1,\cdots ,a_n\}\cup \{b_1,\cdots,b_n\}$ such that for all $i$, $a_i$ occurs before $a_{i+1}$ and $b_i$ occurs before $b_{i+1}$.
\end{defn}

Comultiplication for the noncommutative symmetric power sums (type 1) is given in~\cite{GKLLRT94} by \[\Delta ( \p_k) = 1 \oplus \p_k + \p_k \oplus 1.\]  Thus \[\Delta(\p_\alpha) = \prod_i \Delta (\p_{\alpha_i}) = \prod_i (1\oplus \p_{\alpha_i}+\p_{\alpha_i}\oplus 1) = \sum_{\substack{\gamma,\beta\\\alpha \in \gamma \shuffle \beta}} \p_{\gamma}\oplus \p_\beta.\]
\begin{notn}[$C(\alpha,\beta)$]  Let $a_j$ denote the number of parts of size $j$ in $\alpha$ and $b_j$ denote the number of parts of size $j$ in $\beta$.  Define $C(\alpha,\beta)=\prod_j \binom{a_j+b_j}{a_j}.$  A straightforward calculation shows that $C(\alpha,\beta) = z_{\alpha\cdot\beta}/(z_\alpha z_\beta)$.
\end{notn}

\begin{thm}\label{thm:productpower}
Let $\alpha$ and $\beta$ be compositions.  Then 
\[\Psi_\alpha \Psi_\beta = \frac{1}{C(\alpha,\beta)}\sum_{\gamma \in \alpha\shuffle\beta} \Psi_\gamma.\] 
\end{thm}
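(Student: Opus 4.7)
The natural approach is to use the duality between the product in $\QS$ and the coproduct in $\NS$, together with the known coproduct formula for $\p_\gamma$ stated just before the theorem.

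The plan is as follows. First I would expand $\Psi_\alpha \Psi_\beta = \sum_\gamma c_\gamma \Psi_\gamma$ in the $\Psi$ basis; since $\<\Psi_\delta, \p_\gamma\> = z_\delta \delta_{\delta,\gamma}$, pairing both sides with $\p_\gamma$ gives $c_\gamma = z_\gamma^{-1}\<\Psi_\alpha \Psi_\beta, \p_\gamma\>$. The duality between $\QS$ and $\NS$ as Hopf algebras converts this into
\[
\<\Psi_\alpha \Psi_\beta, \p_\gamma\> \;=\; \<\Psi_\alpha \otimes \Psi_\beta,\ \Delta(\p_\gamma)\>.
\]
Now I would substitute the coproduct formula
\[
\Delta(\p_\gamma) \;=\; \sum_{S \subseteq [\ell(\gamma)]} \p_{\gamma|_S} \otimes \p_{\gamma|_{S^c}}
\]
(obtained by expanding $\prod_i(1 \otimes \p_{\gamma_i} + \p_{\gamma_i}\otimes 1)$) and apply bilinearity. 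By the duality $\<\Psi_\delta, \p_\eta\> = z_\delta\delta_{\delta,\eta}$, only those subsets $S$ with $\gamma|_S = \alpha$ and $\gamma|_{S^c} = \beta$ contribute, each giving $z_\alpha z_\beta$. Thus $\<\Psi_\alpha \Psi_\beta, \p_\gamma\>$ equals $z_\alpha z_\beta$ times the number of such subsets $S$.

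Next, I would observe that a subset $S$ with $\gamma|_S = \alpha$ and $\gamma|_{S^c} = \beta$ is exactly a way of realizing $\gamma$ as a shuffle of $\alpha$ and $\beta$ in the sense of the paper's definition (where each shuffle is recorded by which positions come from the first word). Summing over all $\gamma$, this gives
\[
\sum_{\gamma} \#\{S : \gamma|_S = \alpha,\ \gamma|_{S^c} = \beta\}\ \Psi_\gamma \;=\; \sum_{\gamma \in \alpha \shuffle \beta}\Psi_\gamma,
\]
where in the right-hand sum each sequence $\gamma$ appears with multiplicity equal to the number of distinct shuffle realizations. Combining,
\[
c_\gamma \;=\; \frac{z_\alpha z_\beta}{z_\gamma}\,\#\{S : \gamma|_S = \alpha,\ \gamma|_{S^c} = \beta\}.
\]

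Finally, I would note that for any $\gamma \in \alpha \shuffle \beta$ the underlying partition $\widetilde{\gamma}$ equals $\widetilde{\alpha \cdot \beta}$, so $z_\gamma = z_{\alpha \cdot \beta}$. The identity $C(\alpha,\beta) = z_{\alpha \cdot \beta}/(z_\alpha z_\beta)$ stated in the notation (verified by grouping parts by size) then yields $z_\alpha z_\beta / z_\gamma = 1/C(\alpha,\beta)$, completing the proof.

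The only subtle point, which I view as the main obstacle, is the bookkeeping in the translation between ``subsets $S$ of $[\ell(\gamma)]$ that split $\gamma$ into $(\alpha,\beta)$'' and ``elements of $\alpha\shuffle\beta$ equal to $\gamma$''; one must be careful that the paper's shuffle convention labels positions rather than collapsing equal interleavings, so that the counting matches without an extra multiplicity correction.
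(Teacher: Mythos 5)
Your proposal is correct and follows essentially the same route as the paper: both rest on the Hopf duality $\<\Psi_\alpha\Psi_\beta,\p_\gamma\>=\<\Psi_\alpha\otimes\Psi_\beta,\Delta(\p_\gamma)\>$ together with $\Delta(\p_\gamma)=\prod_i(1\otimes\p_{\gamma_i}+\p_{\gamma_i}\otimes 1)$, which the paper compresses into the statement that the unscaled duals satisfy $\psi_\alpha\psi_\beta=\sum_{\gamma\in\alpha\shuffle\beta}\psi_\gamma$, followed by the same rescaling $z_\alpha z_\beta/z_{\alpha\cdot\beta}=1/C(\alpha,\beta)$ using $z_\gamma=z_{\widetilde{\gamma}}$. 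Your closing remark about matching subsets $S$ to shuffle realizations is handled implicitly by the paper's multiset convention for $\alpha\shuffle\beta$, so no extra correction is needed.
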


\begin{proof}
Let $\alpha$ and $\beta$ be compositions.  Then 
\begin{align}
\Psi_\alpha \Psi_\beta & = (z_\alpha \psi_\alpha)(z_\beta \psi_\beta)\nonumber\\
&= (z_\alpha z_\beta) (\psi_\alpha\psi_\beta). \label{eq:prod}
\end{align}
Since the $\psi$ are dual to the $\p$, we have that $\displaystyle{\psi_\alpha\psi_\beta=\sum_{\gamma \in \alpha\shuffle\beta}\psi_\gamma}$.  Note that for any rearrangement $\delta$ of $\gamma$, $z_\delta=z_\gamma$.  Thus, we can rewrite \eqref{eq:prod} as 
\[\Psi_\alpha\Psi_\beta = \frac{z_\alpha z_\beta}{z_{\alpha\cdot\beta}}\sum_{\gamma \in \alpha \shuffle \beta}\Psi_\gamma. \]
\end{proof}

In addition to this proof based on duality, we note that it is possible to prove this product rule directly using the monomial expansion of the quasisymmetric power sums.  We do this by showing that the coefficients in the quasisymmetric monomial function expansions of both sides of the product formula in Theorem~\ref{thm:productpower} are the same.
\begin{defn}[overlapping shuffle, $\cshuffle$] $\delta\cshuffle\beta$ is the set of {\em overlapping shuffles} of $\delta$ and $\eta$, that is, shuffles where a part of $\delta$ and a part of $\eta$ can be added to form a single part.

\end{defn}
\begin{lemma}\label{lem:productpi}
Let $\alpha \vDash m$, $\beta \vDash n$, and fix $\xi$ a coarsening of a shuffle of $\alpha$ and $\beta$.  Then
\[\binom{m+n}{m}\sum_{\substack{\delta\cgeq\alpha,\eta\cgeq \beta\\\text{s.t.\ }\xi\in\delta\cshuffle \eta}}\frac{m!}{\pi(\alpha,\delta)}\frac{n!}{\pi(\beta,\eta)} = \sum_{\substack{\gamma \in \alpha \shuffle \beta\\\gamma \cleq \xi}}\frac{(m+n)!}{\pi(\gamma,\xi)}.\]
\end{lemma}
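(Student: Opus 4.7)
The plan is to recast the claim as a clean algebraic identity, then prove it by a block-by-block factorization over $\xi$ followed by a short induction, rather than directly hunting for a bijection between the two sides.

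First I would divide both sides of the lemma by $(m+n)! = \binom{m+n}{m}\,m!\,n!$ to obtain the equivalent symmetric identity
$$\sum_{\substack{\delta \cgeq \alpha,\, \eta \cgeq \beta \\ \xi \in \delta \cshuffle \eta}} \frac{1}{\pi(\alpha,\delta)\,\pi(\beta,\eta)} \;=\; \sum_{\substack{\gamma \in \alpha \shuffle \beta \\ \gamma \cleq \xi}} \frac{1}{\pi(\gamma,\xi)}.$$
The key structural observation is that $\pi$ factors along block boundaries of $\xi$: each overlapping shuffle structure $\xi \in \delta \cshuffle \eta$ partitions the blocks of $\xi$ into $\delta$-only blocks (where $\xi_i = \delta_j$), $\eta$-only blocks, and combined blocks (where $\xi_i = \delta_j + \eta_k$), and $\pi(\alpha,\delta)\pi(\beta,\eta)$ decomposes as a product over those blocks with local factors $\pi(\alpha^{(j)})$, $\pi(\beta^{(k)})$, or $\pi(\alpha^{(j)})\pi(\beta^{(k)})$ respectively. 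On the right side, the data of a shuffle $\gamma \in \alpha \shuffle \beta$ with $\gamma \cleq \xi$ is equivalent to an OS structure as above together with, in each combined block, a choice of shuffle $\gamma^{(i)} \in \alpha^{(j)} \shuffle \beta^{(k)}$; and $\pi(\gamma,\xi) = \prod_i \pi(\gamma^{(i)})$ factors in lockstep.

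Organizing both sides by OS structure, the $\delta$-only and $\eta$-only factors match trivially, so the identity reduces to the local claim
$$\frac{1}{\pi(\mu)\,\pi(\nu)} \;=\; \sum_{\rho \in \mu \shuffle \nu} \frac{1}{\pi(\rho)}, \qquad \mu = \alpha^{(j)},\ \nu = \beta^{(k)},$$
for each combined block, which I would prove by induction on $\ell(\mu)+\ell(\nu)$. Setting $A = |\mu|$, $B = |\nu|$, condition each $\rho \in \mu \shuffle \nu$ on whether its last part comes from $\mu$ or from $\nu$; letting $\mu'', \nu''$ denote $\mu, \nu$ with their last parts removed and $\rho'$ denote $\rho$ with its last part removed, the identities $\pi(\rho) = (A+B)\pi(\rho')$, $\pi(\mu) = A\pi(\mu'')$, $\pi(\nu) = B\pi(\nu'')$ combined with the inductive hypothesis yield
$$\sum_\rho \frac{1}{\pi(\rho)} = \frac{1}{A+B}\left(\frac{1}{\pi(\mu'')\pi(\nu)} + \frac{1}{\pi(\mu)\pi(\nu'')}\right) = \frac{1}{A+B}\cdot\frac{A+B}{\pi(\mu)\pi(\nu)} = \frac{1}{\pi(\mu)\pi(\nu)}.$$

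The main obstacle I expect is the reduction step: carefully establishing the bijection between the data (OS structure plus combined-block shuffles $\gamma^{(i)}$) and the shuffles $\gamma \in \alpha \shuffle \beta$ refining $\xi$ on the right, and verifying that $\pi(\alpha,\delta)\pi(\beta,\eta)$ and $\pi(\gamma,\xi)$ both factor over exactly the same block decomposition of $\xi$. Once this bookkeeping is in place, the reduction to local identities is mechanical and the induction is a three-line computation.
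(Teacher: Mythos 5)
Your proof is correct, but it takes a genuinely different route from the paper's. The paper proves the lemma bijectively: using Lemma~\ref{lem:cons} it rewrites $\tfrac{m!}{\pi(\alpha,\delta)}$ as $|\Cons_{\alpha\cleq\delta}|$ (and likewise for the other factors), then constructs an explicit bijection between $Y_m\times\bigcup(\Cons_{\alpha\cleq\delta}\times\Cons_{\beta\cleq\eta})$ and $\bigcup_{\gamma}(\Cons_{\gamma\cleq\xi}\times\{\gamma\})$ by relabelling the two consistent permutations along a chosen $m$-subset of $[m+n]$ and interleaving their blocks. You instead normalize by $(m+n)!$, factor both sides over the blocks of $\xi$, and reduce to the local identity $\sum_{\rho\in\mu\shuffle\nu}\pi(\rho)^{-1}=\pi(\mu)^{-1}\pi(\nu)^{-1}$, which your last-part induction settles in three lines (I checked the reduction: a shuffle $\gamma\cleq\xi$ is exactly an overlapping-shuffle realization of $\xi$ together with an interleaving inside each combined block, and $\pi(\gamma,\xi)$ factors block-by-block in lockstep with $\pi(\alpha,\delta)\pi(\beta,\eta)$). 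Your approach is more elementary and self-contained --- it needs neither Lemma~\ref{lem:cons} nor any permutation combinatorics --- and it has the side benefit of making explicit that the left-hand sum must be taken over overlapping-shuffle \emph{realizations} (with multiplicity when a pair $(\delta,\eta)$ produces $\xi$ in more than one way), a point the paper's notation leaves implicit; what it gives up is the explicit combinatorial matching of consistent permutations, which is in keeping with the paper's stated goal of providing bijective arguments. The one step you flag as a risk, the bookkeeping identifying the two block decompositions, does go through exactly as you describe, so the proof is complete once that paragraph is written out carefully.
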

\begin{proof} Let $\alpha \vDash m$, $\beta \vDash n$, and fix $\xi$, a coarsening of a shuffle of $\alpha$ and $\beta$.  Then $\xi=\xi_1,\ldots,\xi_k$ where each $\xi_i$ is a (known) sum of parts of $\alpha$ or $\beta$, or both.   
Let $Y_m= \{D\subseteq [m+n]: |D|=m\}$ and $B_{\xi,\alpha,\beta}=\{\gamma \in \alpha\shuffle\beta:\gamma\cleq\xi\}$.  We establish a bijection 
\[f: Y_m\times \bigcup_{\substack{\delta\cgeq\alpha,\eta\cgeq\beta\\\text{s.t.\ }\xi\in\delta\cshuffle\eta}}\left(\Cons_{\alpha\cleq\delta}\times\Cons_{\beta\cleq\eta}\right)\rightarrow \bigcup_{\gamma \in B_{\xi,\alpha,\beta}} (\Cons_{\gamma\cleq\xi}\times\{\gamma\}).\]

Let $(D, \sigma, \tau) \in Y_m\times \bigcup_{\substack{\delta\cgeq\alpha,\eta\cgeq\beta\\\text{s.t.\ }\xi\in\delta\cshuffle\eta}}\left(\Cons_{\alpha\cleq\delta}\times\Cons_{\beta\cleq\eta}\right)$.  Then $D=\{i_1<i_2<\ldots<i_m\}$.  To construct $(\word,\gamma)=f((D,\sigma,\tau))$: 
\begin{enumerate}
\item Create a word $\widetilde{\sigma}$ that is consistent with $\alpha\cleq\delta$ by replacing each $j$ in $\sigma$ with $i_j$ from $D$.  Similarly, use $[m+n]\setminus D$ to create $\widetilde{\tau}$ consistent with $\beta\cleq\eta$.
\item Arrange the parts of $\widetilde{\sigma}$ and $\widetilde{\tau}$ in a single permutation by placing the parts corresponding to $\alpha_i$ (resp. $\beta_i$) in the location they appear in $\xi$.  Finally, for all parts within a single part of $\xi$, arrange the sub-permutations so that the final elements of each sub-permutation creates an increasing sequence from left to right.  Note that this will keep parts of $\alpha$ in order since $\widetilde{\sigma}$ is consistent with $\alpha\cleq \delta$ and parts of $\alpha$ occurring in the same part of $\xi$ also occurred in the same part of $\delta$.  (An analogous statement is true for parts of $\beta$.)  
\item The resulting permutation is $\word=f((D,\sigma,\tau))$ and is an element of $\Cons_{\gamma\cleq\xi}$ where $\gamma$ is determined by the order of parts in $\word$ corresponding to $\alpha$ and $\beta$.
\end{enumerate}

Conversely, given $(\word',\gamma) \in \bigcup_{\gamma \in B_{\xi,\alpha,\beta}}(\Cons_{\gamma\leq\xi}\times\{\gamma\})$, construct a triple $(D',\sigma',\tau')$ by: 
\begin{enumerate}
\item In $\word'$, the $i$th block corresponds to the $i$th part of $\gamma$.  Place the labels in the $i$th block of $\word'$ in $D'$ if the $i$th part of $\gamma$ is from $\alpha$.
\item Let $\widetilde{\sigma}'$ be the subword of $\word'$ consisting of blocks corresponding to parts of $\alpha$, retaining the double-lines to show which parts of $\alpha$ were in the same part of $\xi$ to indicate $\delta\cgeq \alpha$.  Rewrite as a permutation in $S_m$ by standardizing in the usual way: replace the $i^{th}$ smallest entry with $i$ for $1\leq i \leq m$.  The resulting permutation $\sigma'$ is consistent with $\alpha\cleq\delta$.
\item Similarly construct $\tau'$ from the subword $\widetilde{\tau}'$ of $\word'$ consisting of parts corresponding to parts of $\beta$.  \qedhere
\end{enumerate}\end{proof} 

\begin{example}  Let $\alpha = (2,1,1,2)$, $\beta = (\underline{2},\underline{1})$ and $\xi=(2+1+\underline{2}, \underline{1}, 1+2)$.  
Then $(\delta,\eta)=((2+1,1+2), (\underline{2},\underline{1}))$.

Choose $D=\{1,2,5,6,7,9\}$, $\sigma = |\!|34|6|\!|2|15|\!| $, and $\tau=|\!|13|\!|2|\!|$.  Then $\widetilde{\sigma} = |\!|5 6|9|\!|2|17|\!|$ and $\widetilde{\tau}=|\!|3 8|\!|4|\!|$.  Then $\word = |\!|56|38|9|\!|4|\!|2|17|\!|$ and the corresponding shuffle $\gamma=(2,\underline{2},1,\underline{1},1,2)$.

Now, consider $\gamma'=(\underline{2},2,1,\underline{1},1,2)$ and $\word'=|\!|24|16|8|\!|9|\!|5|37|\!|.$  Then $\widetilde{\sigma}'=|\!|16|8|\!|5|37|\!|$ and $\widetilde{\tau}'=|\!|24|\!|9|\!|$.  Then $D'=\{1,3,5,6,7,8\}$, $\sigma'=|\!|14|6|\!|3|25|\!|$, and $\tau'=|\!|12|\!|3|\!|$.
\end{example}

We now use Lemma~\ref{lem:productpi} to offer a more combinatorial proof of Theorem~\ref{thm:productpower}.

\begin{proof}(of Theorem~\ref{thm:productpower})
Let $\alpha \vDash m$ and $\beta \vDash n$.  Then 
\begin{align}
\Psi_\alpha \Psi_\beta & = \left(\sum_{\delta\cgeq \alpha}\frac{z_\alpha}{\pi(\alpha,\delta)}M_\delta\right)\left(\sum_{\eta\cgeq \beta} \frac{z_\beta}{\pi(\beta,\eta)}M_\eta\right)\nonumber\\
&=z_\alpha z_\beta \sum_{\delta\cgeq\alpha}\sum_{\eta\cgeq \beta}\frac{1}{\pi(\alpha,\delta)\pi(\beta,\eta)}M_\delta M_\eta\nonumber\\
&=\frac{z_{\alpha\cdot\beta}}{C(\alpha,\beta)}\sum_{\delta\cgeq\alpha}\sum_{\eta\cgeq \beta}\frac{1}{\pi(\alpha,\delta)\pi(\beta,\eta)}\sum_{\zeta \in \delta \cshuffle\eta}M_\zeta\nonumber\\
&= \frac{z_{\alpha\cdot\beta}}{C(\alpha,\beta)}\sum_{\zeta \vDash m+n}M_\zeta \left(\sum_{\substack{(\delta,\eta): \delta \cgeq \alpha, \eta \cgeq \beta\\\zeta \in \delta\cshuffle\eta}}\frac{1}{\pi(\alpha,\delta)\pi(\beta,\eta)}\right).\label{eq:monex}
\end{align}

By Lemma~\ref{lem:productpi} we can rewrite \eqref{eq:monex} as
\begin{align*}
\Psi_\alpha\Psi_\beta &=\frac{z_{\alpha\cdot\beta}}{C(\alpha,\beta)}\sum_{\zeta \vDash m+n} M_\zeta \sum_{\substack{\gamma\in\alpha\shuffle\beta\\\gamma\cleq\zeta}}\frac{1}{\pi(\gamma,\zeta)}\\
&=\frac{1}{C(\alpha,\beta)}\sum_{\gamma \in \alpha \shuffle\beta}\sum_{\zeta \cgeq \gamma} \frac{z_{\gamma}}{\pi(\gamma,\zeta)}M_\zeta\\
&=\frac{1}{C(\alpha,\beta)}\sum_{\gamma \in \alpha\shuffle\beta}\Psi_\gamma.\qedhere
\end{align*}

\end{proof}

Now that we have a product formula for the quasisymmetric power functions, a more straightforward proof can be given for Theorem~\ref{thm:refine}. 
\begin{proof}(of Theorem~\ref{thm:refine})  We proceed by induction on $\ell(\lambda)$, the length of $\lambda$.  If $\ell(\lambda)=1$, then $\lambda=(n)$ and $p_{(n)}=m_{(n)}=M_{(n)}=\Psi_{(n)}.$  (This is because $\psi_{(n)}=\frac{1}{\pi((n),(n))}M_{(n)}=\frac{1}{n}M_{(n)}$ and $\Psi_{(n)}=z_{(n)}\psi_{(n)}=n\psi_{(n)}$.)

Suppose the theorem holds for partitions of length $k$ and let $\mu$ be a partition with $\ell(\mu)=k+1$.   Suppose  $\mu_{k+1}=j$ and let $\lambda=(\mu_1, \mu_2, \ldots, \mu_k)$. Let $m_j$ be the number of parts of size $j$ in $\mu$. Then, using the induction hypothesis and Theorem~\ref{thm:productpower}, we have \begin{equation}\label{ind} p_\mu=p_\lambda p_{(j)}= \left( \sum_{\substack{\alpha \vDash |\lambda|\\ \tilde{\alpha}=\lambda}} \Psi_\alpha\right)\Psi_{(j)}=  \sum_{\substack{\alpha \vDash|\lambda|\\ \tilde{\alpha}=\lambda}} \left( \Psi_\alpha\Psi_{(j)}\right)=\frac{1}{m_j} \sum_{\substack{\alpha \vDash |\lambda|\\ \tilde{\alpha}=\lambda}} \sum_{\gamma \in \alpha\shuffle (j)}  \Psi_\gamma.\end{equation} Here, we used the fact that, if $\tilde{\alpha}=\lambda$, then $\displaystyle C(\alpha, (j))=\binom{m_j}{m_j-1}=m_j$.

Suppose $\gamma\in \alpha \shuffle (j)$ for some  $\alpha \vDash |\lambda|$ such that $\tilde{\alpha}=\lambda$. Then $\gamma \vDash |\mu|$ and $\tilde{\gamma}=\mu$. Moreover, every composition $\theta\vDash |\mu|$ with $\tilde{\theta}=\mu$ belongs to  $ \alpha \shuffle (j)$ for some $\alpha\vDash |\lambda|$ with $\tilde{\alpha}=\lambda$. 

We write  $\gamma\vDash |\mu|$ with $\tilde{\gamma}=\mu$ as $\gamma^{(1)},J^{(1)},\gamma^{(2)},J^{(2)},\ldots, \gamma^{(q)},J^{(q)}$ where each $\gamma^{(i)}$ has no part equal to $j$ and each $J^{(i)}$ consists of exactly $r_i$ parts equal to $j$. We refer to $J^{(i)}$ as the $i$th block of parts equal to $j$. Here $r_i>0$ for $i=1, 2, \ldots, q-1$ and $r_q\geq 0$. Moreover, $r_1+r_2+\cdots +r_q=m_j$. Denote by $\alpha(\gamma, i)$ the composition obtained from $\gamma$ be removing the first $j$ in $J^{(i)}$ (if it exists). Then,  the multiplicity of $\gamma$ in $\alpha(\gamma, i)\shuffle (j)$ equals $r_i$ (since $(j)$ can be shuffled in $r_i$ different positions in the  $i$th block of parts equal to $j$ of $\alpha(\gamma, i)$ to obtain $\gamma$.) Then, the multiplicity of $\gamma$ in $\displaystyle\cup_{\tilde{\alpha}=\lambda}\{\alpha \shuffle (j) \mid \alpha \vDash |\lambda|, \tilde{\alpha}=\lambda\}$ equals $m_j$ and 
\[p_\lambda=\sum_{\substack{\beta \vDash |\mu|\\ \tilde{\beta}=\mu}} \Psi_\beta.\qedhere\]

\end{proof}

\subsection{Products of type 2 quasisymmetric power sums}
As with the type one quasisymmetric power sums, since $\Delta\ptwo_k=1\oplus\ptwo_k+\ptwo_k\oplus 1$, the product rule is \begin{equation}\label{eqn:productpower2}
\Phi_\alpha\Phi_\beta = \frac{1}{C(\alpha,\beta)}\sum_{\gamma\in \alpha\shuffle\beta}\Phi_\gamma.\end{equation}

Again, we can give a combinatorial proof of the product rule.  The proof of \eqref{eqn:productpower2} is almost identical to the proof of Theorem \ref{thm:productpower}, so we omit the details.  A significant difference is the proof of the analog of Lemma \ref{lem:productpi}, so we give the analog here.
\begin{lemma}Let $\alpha \vDash m$, $\beta \vDash n$, and fix $\xi$ a coarsening of a shuffle of $\alpha$ and $\beta$.  Then
	\[\sum_{\substack{\delta\cgeq\alpha,\eta\cgeq \beta\\\text{s.t.\ }\xi\in\delta\cshuffle \eta}}\frac{1}{\spab(\alpha,\delta)}\frac{1}{\spab(\beta,\eta)} = \sum_{\substack{\gamma \in \alpha \shuffle \beta\\\gamma \cleq \xi}}\frac{1}{\spab(\gamma,\xi)}.\]
\end{lemma}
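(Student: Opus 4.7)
The plan is to follow the structure of the proof of Lemma~\ref{lem:productpi}, replacing consistent permutations with ordered set partitions — the combinatorial objects naturally attached to the type~2 power sums via Theorem~\ref{thm:power2}. The first step is to convert both sides into expressions involving $|\OSP|$. From $\spab(\gamma,\xi) = \prod_i \ell(\gamma^{(i)})! \cdot \prod_j \gamma_j$ together with $|\OSP(\gamma,\xi)| = \ell(\gamma)!/\prod_i \ell(\gamma^{(i)})!$ when $\gamma \cleq \xi$, we get
\[\frac{1}{\spab(\gamma,\xi)} = \frac{|\OSP(\gamma,\xi)|}{\ell(\gamma)!\,\prod_j \gamma_j},\]
and analogously for $\spab(\alpha,\delta)$ and $\spab(\beta,\eta)$. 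Because any $\gamma \in \alpha \shuffle \beta$ satisfies $\ell(\gamma) = \ell(\alpha) + \ell(\beta)$ and $\prod_j \gamma_j = \prod_j \alpha_j \prod_j \beta_j$, clearing the common factor $\prod_j \alpha_j \prod_j \beta_j$ reduces the identity to
\[\binom{\ell(\alpha) + \ell(\beta)}{\ell(\alpha)} \sum_{\substack{\delta \cgeq \alpha,\ \eta \cgeq \beta \\ \xi \in \delta \cshuffle \eta}} |\OSP(\alpha,\delta)| \cdot |\OSP(\beta,\eta)| = \sum_{\substack{\gamma \in \alpha \shuffle \beta \\ \gamma \cleq \xi}} |\OSP(\gamma,\xi)|.\]

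The bulk of the work will be a bijection between the two sides of this reduced identity. On the left, objects are tuples $(D, \delta, \eta, A, B)$ where $D \subseteq \{1,\ldots, \ell(\alpha) + \ell(\beta)\}$ has size $\ell(\alpha)$, $\delta \cgeq \alpha$, $\eta \cgeq \beta$, $\xi \in \delta \cshuffle \eta$, $A \in \OSP(\alpha,\delta)$, and $B \in \OSP(\beta,\eta)$. On the right, objects are pairs $(\gamma, C)$ with $\gamma \in \alpha \shuffle \beta$, $\gamma \cleq \xi$, and $C \in \OSP(\gamma,\xi)$. The forward map uses $D$ to choose the positions of $\alpha$-parts inside a shuffle $\gamma$, thereby determining $\gamma$; the overlapping shuffle $\xi \in \delta \cshuffle \eta$ together with $\alpha \cleq \delta$ and $\beta \cleq \eta$ dictates which parts of $\gamma$ fall into which block of $\xi$, giving $\gamma \cleq \xi$; and $A$, $B$ combine into a single $C \in \OSP(\gamma,\xi)$ by listing, within each block $\xi_i$, the indices of the $\alpha$-parts of $\gamma^{(i)}$ according to $A$ restricted to the piece of $\delta$ sitting inside $\xi_i$, interleaved with the $\beta$-indices according to the corresponding restriction of $B$.

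For the inverse, given $(\gamma, C)$, recover $D$ as the positions of $\alpha$-parts of $\gamma$; within each block $\xi_i$, the consecutive runs of $\alpha$-parts and of $\beta$-parts determine $\delta^{(i)}$ and $\eta^{(i)}$, and hence $\delta, \eta$ and the overlapping shuffle $\xi \in \delta \cshuffle \eta$; finally, $C$ splits into its $\alpha$- and $\beta$-index sub-OSPs, producing $A$ and $B$. Both directions are manifestly inverse once the components are defined.

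The main technical hurdle is bookkeeping around the overlapping shuffle: a block $\xi_i$ may arise from a single part of $\delta$, a single part of $\eta$, or the merger of one of each, and the sub-OSPs read off from $A$, $B$ must be interleaved inside $C_i$ in a way compatible with the relative order of $\alpha$- and $\beta$-parts determined by $D$. The sizes $\ell(\gamma^{(i)}) = \ell(\alpha^{(\cdot)}) + \ell(\beta^{(\cdot)})$ match automatically in each case, but writing out the three cases cleanly — and confirming that the chosen interleaving is forced by the data — is where the real work lies. The product-formula proof then follows by substituting this identity exactly as in the proof of Theorem~\ref{thm:productpower}, which is why the details are omitted in the text.
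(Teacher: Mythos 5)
Your algebraic reduction is exactly the paper's: multiplying through by $\ell(\gamma)!\prod_j\alpha_j\prod_j\beta_j$ converts the identity into
$\binom{\ell(\alpha)+\ell(\beta)}{\ell(\alpha)}\sum|\OSP(\alpha,\delta)||\OSP(\beta,\eta)|=\sum|\OSP(\gamma,\xi)|$,
and a bijection is the right way to finish. But the bijection you sketch assigns the wrong role to the subset $D$, and this is not mere bookkeeping --- it breaks the map. You take $D$ to be ``the positions of the $\alpha$-parts of $\gamma$,'' both in the forward direction (to build $\gamma$) and in the inverse (to recover $D$). Those positions are not free: once $\delta\cgeq\alpha$, $\eta\cgeq\beta$ and the overlapping shuffle $\xi\in\delta\cshuffle\eta$ are fixed, the block of $\xi$ into which each $\alpha$-part and each $\beta$-part must land is determined, so the admissible position-sets are constrained block by block and are in general strictly fewer than $\binom{\ell(\alpha)+\ell(\beta)}{\ell(\alpha)}$ in number. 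Concretely, take $\alpha=(1)$, $\beta=(2)$, $\xi=(1,2)$, so the only decomposition is $\delta=(1)$, $\eta=(2)$ with $\xi=(\delta_1,\eta_1)$: the left side of the reduced identity is $\binom{2}{1}\cdot1\cdot1=2$, while on the right the only admissible shuffle is $\gamma=(1,2)$ (since $(2,1)\not\cleq(1,2)$), with $|\OSP(\gamma,\xi)|=2$. Your forward map sends $D=\{2\}$ to $\gamma=(2,1)$, which is not a refinement of $\xi$, and your inverse always returns $D=\{1\}$; the two elements $(\{1\},\{2\})$ and $(\{2\},\{1\})$ of $\OSP(\gamma,\xi)$ are never distinguished by your data.

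The fix --- and what the paper actually does --- is to use the subset to relabel values rather than to place parts. Choose $U\subseteq\{1,\dots,\ell(\alpha)+\ell(\beta)\}$ with $|U|=\ell(\alpha)$, and renumber the ground set of $A$ order-preservingly by $U$ and that of $B$ by $U^c$, so that every part of $\alpha$ and of $\beta$ acquires a label in $\{1,\dots,\ell(\gamma)\}$. The order of parts in $\gamma$ is then forced between distinct blocks of $\xi$ by the overlapping-shuffle data, and within a merged block it is read off by comparing labels; the relabelled blocks assemble into $C\in\OSP(\gamma,\xi)$. The inverse recovers $U$ as the set of labels of $C$ attached to $\alpha$-parts --- not as a set of positions in $\gamma$. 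With that correction your outline goes through; without it the two sides of your proposed bijection do not have the same cardinality.
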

\begin{proof}  First, note that $\prod_i\alpha_i$ and $\prod_i\beta_i$ occur in the denominator of every term in the left hand side of our desired equation, but $$\prod_i\alpha_i\prod_i\beta_i=\prod_i \gamma_i \text{ and } \ell(\alpha)+\ell(\beta)=\ell(\gamma)$$  for any $\gamma$ occuring in the right hand sum.  Then multiplying by $\ell(\gamma)!\prod_i\alpha_i\prod_i\beta_i $ on the left and right, we need to show
	\begin{align*}{\ell(\alpha)+\ell(\beta) \choose \ell(\alpha)}\sum_{\substack{\delta\cgeq\alpha,\eta\cgeq \beta\\\text{s.t.\ }\xi\in\delta\cshuffle \eta}}\frac{\ell(\alpha)!}{\prod_j\ell(\alpha^{(j)})!}\frac{\ell(\beta)!}{\prod_j\ell(\beta^{(j)})!} = \sum_{\substack{\gamma \in \alpha \shuffle \beta\\\gamma \cleq \xi}}\frac{\ell(\gamma)!}{\prod_j\ell(\gamma^{(j)})!}.
	\end{align*}
	Equivalently, we need to show 
		\begin{align}\label{eq:power2eq15}{\ell(\alpha)+\ell(\beta) \choose \ell(\alpha)}\sum_{\substack{\delta\cgeq\alpha,\eta\cgeq \beta\\\text{s.t.\ }\xi\in\delta\cshuffle \eta}}|\OSP(\alpha,\delta)||\OSP(\beta,\eta)| = \sum_{\substack{\gamma \in \alpha \shuffle \beta\\\gamma \cleq \xi}}|\OSP(\gamma,\xi)|.
	\end{align}
	
	For a given choice of $\delta$ and $\eta$ satisfying the conditions on the left, select $S$ and $T$, ordered set partitions in $\OSP(\alpha,\delta)$ and $\OSP(\beta,\eta)$ respectively. Pick a subset $U$ of size $\ell(\alpha)$ from the first $\ell(\alpha)+\ell(\beta)$ positive integers and re-number the elements in $S$ and $T$ in order, such that the elements of $S$ are re-numbered with the elements of $U$ and the elements of $T$ are re-numbered with  elements of $U^c$ to form $\tilde{S}$ and $\tilde{T}$ respectively.  Going forward, consider each of the subsets as lists, with the elements listed in increasing order.   Use the subsets to assign an additional value to each part of $\alpha$ or $\beta$, working in order.  Say that $f(\alpha,i)=m$ if $\alpha_i$ occurs as the $k$th element in $\alpha^{(j)}$ and $m$ is the $k$th element in $\tilde{S}_j$.  Similarly say that $f(\beta,i)=m$ if $\beta_i$ occurs as the $k$th element in $\beta^{(j)}$ and $m$ is the $k$th element in $\tilde{T}_j$.   Note that each choice of $\delta$ and $\eta$ gives a refinement of $\xi$, with each $\xi_i$ a sum of parts of $\alpha$ and $\beta$.  Then sort the parts of $\alpha$ and $\beta$ to create $\gamma$, such that parts of $\alpha$ occur in order and parts of $\beta$ occur in order, and the following additional rules are satisfied:  Let $\alpha_i$ be one of the parts that forms $\delta_j$ which in turn is used to form $\xi_k$ and $\beta_l$ be one of the parts that forms $\eta_m$ which in turn is used form $\xi_n$.  Then
 \begin{itemize}
	 	\item if $k>n$ (i.e.\ $\beta_l$ is an earlier subpart of $\xi$ than $\alpha_i$ is), $\alpha_i$ occurs after $\beta_l$,
	 	\item if $k<n$,(i.e.\ $\alpha_i$ is an earlier subpart of $\xi$ than $\beta_l$ is) $\alpha_i$ occurs before $\beta_l$,
	 	\item if $k=n$ (i.e. $\alpha_i$ and $\beta_l$ eventually make up the same part of $\xi$), $\alpha_i$ is left of $\beta_l$ iff $f(\alpha,i)<f(\beta,l)$. 
	 \end{itemize}
	 Finally, create an element of $\OSP(\gamma,\xi)$ by placing $p$ in the $q$th part if   $f(\alpha,i)=p$ (or $f(\beta,l)=p$) and $\alpha_i$ ($\beta_l$ respectively) is one of the parts used to form $\gamma^{(q)}$.  Note that this map is bijective; since the the parts of $\alpha$ and $\beta$ which occur in the same part of $\gamma$ are sorted by the value they are assigned in the set partition, we can recover from the set partition which integers were assigned to each part (and what $U$ was, by looking at which numbers are assigned to parts corresponding to $\alpha$).
     \end{proof}
     \begin{example}Let $\alpha=(1,2,1)$ and $\beta=(\underline{1},\underline{1},\underline{2})$.  Let $\delta=(1+2,1)$  $\eta=(\underline{1},\underline{1}+\underline{2})$, and $\xi=(1+2+\underline{1},\underline{1}+\underline{2},1)$.  ($\xi$ here is fixed before $\delta$ and $\eta$, but how we write it as an overlapping shuffle depends on their choice.)  Finally let $S=(\{1,3\},\{2\})$, $T=(\{3\},\{1,2\})$, and $U=\{1,4,6\}$.  Then $\tilde{S}=(\{1,6\},\{4\})$ and $\tilde{T}=(\{5\},\{2,3\})$.  Next, we reorder the second and third subparts of $\xi$ to get $\gamma=(1,\overline{1},2,\overline{1},\overline{2},1)$ and the final ordered set partition is $(\{1,5,6\},\{2,3\},\{4\})$. 
     \end{example}
\subsection{The shuffle algebra}

Let $V$ be a vector space with basis $\{v_a \}_{a \in \mathfrak{U}}$ where $\mathfrak{U}$ is a totally ordered set.  For our purposes, $\mathfrak{U}$ will be the positive integers.  The \emph{shuffle algebra} $sh(V)$ is the Hopf algebra of the set of all words with letters in $\mathfrak{U}$, where the product is given by the shuffle product $v \shuffle w$ defined above.  The shuffle algebra and $QSym$ are isomorphic as graded Hopf algebras~\cite{GriRei14}.  We now describe a method for generating $QSym$ through products of the type 1 quasisymmetric power sums indexed by objects called \emph{Lyndon words}; to do this we first need several definitions.

A \emph{proper suffix} of a word $w$ is a word $v$ such that $w=uv$ for some nonempty word $u$.  The following total ordering on words with letters in $\mathfrak{U}$ is used to define Lyndon words.  We say that $u \le_L v$ if either

\begin{itemize}
\item $u$ is a prefix of $v$,   \; \; \; or
\item $j$ is the smallest positive integer such that $u_j \not= v_j$ and $u_j<v_j$.
\end{itemize}

Otherwise $v \le_L u$.  If $w=w_1 w_2 \cdots w_k$ is a nonempty word with $w_i \in \mathfrak{U}$ for all $i$, we say that $w$ is a \emph{Lyndon word} if every nonempty proper suffix $v$ of $w$ satisfies $w < v$.  Let $\mathcal{L}$ be the set of all Lyndon words.  Radford's Theorem~\cite{Rad79}, (Theorem 3.1.1 (e)) states that if $\{ b_a \}_{a \in \mathfrak{U}}$ is a basis for a vector space $V$, then $\{b_w\}_{w \in \mathcal{L}}$ is an algebraically independent generating set for the shuffle algebra $Sh(V)$.  To construct a generating set for $Sh(V)$, first define the following operation (which we will call an \emph{index-shuffle}) on basis elements $b_{\alpha}$ and $b_{\beta}$: 
$$b_{\alpha} \underline{\shuffle} b_{\beta} = \sum_{\gamma \in \alpha \shuffle \beta} b_{\gamma}.$$  Recall that $$\Psi_\alpha \Psi_\beta = \frac{1}{C(\alpha,\beta)}\sum_{\gamma \in \alpha\shuffle\beta} \Psi_\gamma,$$ where $C(\alpha,\beta) = z_{\alpha\cdot\beta}/(z_\alpha z_\beta)$.  Then $\Psi_\alpha \underline{\shuffle} \Psi_\beta = C(\alpha, \beta) \Psi_{\alpha} \Psi_{\beta}$.  Since Radford's theorem implies that every basis element $b_{\alpha}$ can be written as a linear combination of index shuffles of basis elements indexed by Lyndon words, every basis element $\Psi_{\alpha}$ can be written as a linear combination of products of type 1 quasisymmetric power sums indexed by Lyndon words and we have the following result.

\begin{thm}
The set $\{ \Psi_C | C \in \mathcal{L} \}$ freely generates $QSym$ as a commutative $\mathbb{Q}$-algebra.
\end{thm}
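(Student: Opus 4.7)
The plan is to apply Radford's theorem, invoked just before the theorem statement, essentially directly, but with a little care to convert between the ``index-shuffle'' $\underline{\shuffle}$ and ordinary multiplication of quasisymmetric power sums.

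First I would record the setup: since $\{\Psi_\alpha\}_{\alpha \vDash n,\ n \geq 0}$ is a basis of $\QS$, and since Theorem~\ref{thm:productpower} shows that $\Psi_\alpha \cdot \Psi_\beta$ is a nonzero rational multiple of $\Psi_\alpha \underline{\shuffle} \Psi_\beta$ (indeed exactly $C(\alpha,\beta)^{-1}$ times it), we may transport the shuffle-algebra picture onto $\QS$. Taking $V$ to be the graded piece of $\QS$ spanned by the $\Psi_{(n)}$ for $n \geq 1$, Radford's theorem applied to the basis $\{\Psi_{(n)}\}$ of $V$ gives that $\{\Psi_C \mid C \in \mathcal{L}\}$ is an algebraically independent generating set for $Sh(V)$ under the (unweighted) index-shuffle $\underline{\shuffle}$, and the identification $Sh(V) \cong \QS$ is the isomorphism of graded Hopf algebras from~\cite{GriRei14}.

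Next I would translate from the index-shuffle product to the true product of $\QS$. Generation is immediate: any $\Psi_\alpha$ can be written as a $\mathbb{Q}$-linear combination of iterated $\underline{\shuffle}$-products of $\Psi_C$'s with $C \in \mathcal{L}$, and since each iterated $\underline{\shuffle}$-product is, by repeated use of the identity $\Psi_\alpha \underline{\shuffle} \Psi_\beta = C(\alpha,\beta)\Psi_\alpha\Psi_\beta$ together with commutativity/associativity of ordinary multiplication in $\QS$, a nonzero rational scalar multiple of the corresponding iterated $\cdot$-product, we conclude that the $\Psi_C$ for $C \in \mathcal{L}$ generate $\QS$ as a commutative $\mathbb{Q}$-algebra.

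For free generation (algebraic independence), I would argue by contradiction: suppose some nonzero polynomial $P \in \mathbb{Q}[x_C : C \in \mathcal{L}]$ satisfies $P(\Psi_C : C \in \mathcal{L}) = 0$ in $\QS$. Expanding each monomial $\prod_i \Psi_{C_i}$ in terms of the $\underline{\shuffle}$-product introduces only a nonzero rational rescaling (a product of $C(\cdot,\cdot)$'s), so rescaling each coefficient of $P$ accordingly produces a nonzero polynomial $\tilde P$ with $\tilde P(\Psi_C : C \in \mathcal{L}) = 0$ when the products are interpreted as iterated $\underline{\shuffle}$. But this contradicts the algebraic independence part of Radford's theorem under $Sh(V) \cong \QS$.

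The only subtle point is keeping track of which product is being used at each step; the main obstacle is simply bookkeeping the rescaling coefficients $C(\alpha,\beta)$ and confirming they never vanish (which is clear since each $C(\alpha,\beta) = z_{\alpha \cdot \beta}/(z_\alpha z_\beta)$ is a positive rational). Once that is in place, the passage between $\underline{\shuffle}$ and $\cdot$ is invertible at every stage and the theorem follows directly from Radford.
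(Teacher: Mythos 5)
Your proposal is correct and follows essentially the same route as the paper: identify $\QS$ with the shuffle algebra on the $\Psi_{(n)}$, apply Radford's theorem to get that the Lyndon-indexed elements freely generate under the index-shuffle, and then use $\Psi_\alpha \underline{\shuffle} \Psi_\beta = C(\alpha,\beta)\Psi_\alpha\Psi_\beta$ with $C(\alpha,\beta) \neq 0$ to transfer both generation and algebraic independence to the ordinary product. If anything, your write-up is slightly more explicit than the paper's about the algebraic-independence half (the monomial-by-monomial rescaling argument), which the paper leaves implicit.
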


\begin{example}
Since $231$ can be written as $23 \shuffle 1 - 2 \shuffle 13 + 132$, $$\Psi_{231} = C(23,1)\Psi_{23} \Psi_1 - C(2,13) \Psi_2 \Psi_{13} + \Psi_{132} = \Psi_{23} \Psi_1 - \Psi_2 \Psi_{13} + \Psi_{132}. $$
\end{example}

\section{Plethysm on the quasisymmetric power sums}{\label{sec:plethysm}}  The symmetric power sum basis $\{p_\lambda\}_{\lambda\vdash n}$ plays a particularly important role in the language of $\Lambda$-rings.  It is natural to hope that one of the quasisymmetric power sums might play the same role here, and it was this motivation that initially piqued the authors' interest in the quasisymmetric power sums.  This seems not to be the case, so one might take the next section as a warning to similarly minded individuals that this does not appear to be a productive path for studying quasisymmetric plethysm.  To explain the differences between this and the symmetric function case, we remind the reader of the symmetric function case first.
\subsection{Plethysm and symmetric power sums}
Recall that plethysm is a natural (indeed even universal in some well defined functorial sense) $\Lambda$-ring on the symmetric functions.  Following the language of \cite{knutson2006lambda}, recall that a pre-$\Lambda$-ring is a commutative ring $R$ with identity and a set of operations for $i\in \{0,1,2,\cdots\}$ $\lambda^i:R\rightarrow R$ such that for all $r_1,r_2\in R$:
\begin{itemize}
\item $\lambda^0(r_1)=1$
\item $\lambda^1(r_1)=r_1$
\item $\lambda^n(r_1+r_2)=\sum_{i=0}^n \lambda^i(r_1)\lambda^{n-i}(r_2)$
\end{itemize}
To define a $\Lambda$-ring, use $e^X_i$ and $e^Y_i$ as the elementary symmetric functions $e_i$ in the $X$ or $Y$ variables, and define the universal polynomials $P_n$ and $P_{m,n}$ by  $$\sum_{n=0}P_n(e^X_1,\cdots,e^X_n;e^Y_1,\cdots, e^Y_n)t^n=\prod_{i,j}(1-x_{i,j}t),$$ and $$\sum_{n=0}P_{n,m}(e^X_1,\cdots,e^X_{nm})t^n=\prod_{i_1<\cdots<i_m }(1-x_{i_1}x_{i_2}\cdots x_{i_m}t).$$ 
Then a pre-$\Lambda$-ring is by definition a $\Lambda$-ring if 
\begin{itemize}
\item for all $i>1$, $\lambda^i(1)=0$;
\item for all $r_1,r_2\in R$, $n\geq 0$, $$\lambda^n(r_1r_2)=P_n(\lambda^1 (r_1),\cdots, \lambda^n (r_1);\lambda^1 (r_2),\cdots, \lambda^n (r_2));$$
\item for all $r\in R$, $m,n\geq 0$, $$\lambda^m(\lambda^n (r))=P_{m,n}(\lambda^1 (r),\cdots, \lambda^{mn} (r)).$$
\end{itemize}
These operations force the $\lambda_i$ to behave like exterior powers of vector spaces (with sums and products of $\lambda_i$ corresponding to exterior powers of direct sums and tensor products of vector spaces), but are not always so helpful to work with directly.  In the classical case, one works more easily indirectly by defining a new series of operations called the Adams operators $\Psi^n:R\rightarrow R$ by the relationship (for all $r\in R$)
\begin{equation}\label{eq:adams}\frac{d}{dt}\log \sum_{i\geq 0}t^i\lambda^i(r)=\sum_{i=0}^\infty (-1)^i\Psi^{i+1}(r)t^i.\end{equation}
Note that while use $\Psi$ in this section for both the power sums and the Adams operators, the basis elements have subscripts and the Adams operators superscripts.  Standard literature uses $\Psi$ for the Adams operators, so this follows the usual convention.  Moreover, there is quite a close connection between the two, as mentioned below.
\begin{thm}[\cite{knutson2006lambda}]\label{thm:adams}
If $R$ is torsion free, $R$ is a $\Lambda$-ring if and only if for all $r_1,r_2\in R$,
\begin{enumerate}
\item \label{it:1}$\Psi^i(1)=1$,
\item\label{it:2} $\Psi^i(r_1r_2)=\Psi^i(r_1)\Psi^i(r_2)$, and 
\item \label{it:3} $\Psi^i(\Psi^j(r_1))=\Psi^{ij}(r_1)$.
\end{enumerate}
\end{thm}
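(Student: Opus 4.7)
The plan is to invoke the classical \emph{splitting principle} from $\Lambda$-ring theory: pass to an extension $R \hookrightarrow R'$ in which every element of $R$ becomes a sum of \emph{line elements} --- elements $x$ satisfying $\lambda^i(x) = 0$ for $i \geq 2$. The key preliminary identity, which drives every step, is that on a line element the generating series in \eqref{eq:adams} collapses to $\sum_i t^i \lambda^i(x) = 1 + tx$, and computing $\frac{d}{dt}\log(1+tx)$ yields $\Psi^i(x) = x^i$. The axiom $\lambda^n(r_1+r_2) = \sum_i \lambda^i(r_1)\lambda^{n-i}(r_2)$ makes $\sum_i t^i \lambda^i$ convert sums to products, so taking logarithmic derivatives shows each $\Psi^i$ is additive; combined with the splitting, any $\mathbb{Z}$-linear identity in $r$ reduces to a check on line-element decompositions.

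For the forward direction, property (1) follows immediately from the $\Lambda$-ring axiom $\lambda^i(1) = 0$ for $i \geq 2$: the series becomes $1+t$ and so $\Psi^i(1) = 1$. For (2), first observe that the product of two line elements is again a line element (apply $\lambda^n(r_1 r_2) = P_n(\ldots)$ and note that $\prod_{i,j}(1 + X_i Y_j t)$ collapses to $1 + xy\,t$ when only $X_1 = x$ and $Y_1 = y$ are nonzero). Then write $r_1 = \sum_j x_j$, $r_2 = \sum_k y_k$ in $R'$ and compute
\[
\Psi^i(r_1 r_2) = \sum_{j,k}(x_j y_k)^i = \Bigl(\sum_j x_j^i\Bigr)\Bigl(\sum_k y_k^i\Bigr) = \Psi^i(r_1)\Psi^i(r_2).
\]
For (3), $\Psi^j(r) = \sum_k x_k^j$ is again a sum of line elements, so $\Psi^i(\Psi^j(r)) = \sum_k x_k^{ij} = \Psi^{ij}(r)$. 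Torsion-freeness permits descent from $R'$ back to $R$.

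For the reverse direction, invert \eqref{eq:adams} to define $\lambda^i(r)$ recursively as a $\mathbb{Q}$-polynomial in $\Psi^1(r), \ldots, \Psi^i(r)$ --- precisely Newton's identities, which in $\mathrm{Sym}$ express $e_i$ in terms of $p_1, \ldots, p_i$ via \eqref{eq:pfrome}. The pre-$\Lambda$-ring axioms $\lambda^0 = 1$ and $\lambda^1 = \operatorname{id}$ are built into the formula, and additivity of the $\lambda^n$ translates to additivity of the $\Psi^i$, which itself follows from the generating-function relation. The two remaining $\Lambda$-ring axioms, stated in terms of the universal polynomials $P_n$ and $P_{n,m}$, translate under the $(\lambda, \Psi) \leftrightarrow (e, p)$ dictionary into the multiplicativity hypothesis (2) and the composition hypothesis (3); since $P_n$ and $P_{n,m}$ may be computed in any basis of $\mathrm{Sym}$, working in the $p$-basis makes the verifications immediate.

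The main obstacle is integrality. Newton's identities carry rational denominators, so the $\lambda^i(r)$ produced above lie a priori only in $R \otimes \mathbb{Q}$, and showing they actually take values in $R$ uses the torsion-free hypothesis in an essential way. The standard route is to reduce to the universal $\Lambda$-ring on one generator (namely $\mathrm{Sym}$ with $\lambda^i \leftrightarrow e_i$, $\Psi^i \leftrightarrow p_i$), where all the identities among elementary symmetric functions and power sums are classical; one then transports the $\Lambda$-structure to $R$ via the evaluation homomorphism $p_i \mapsto \Psi^i(r)$ (which respects multiplication and composition precisely thanks to hypotheses (2) and (3)).
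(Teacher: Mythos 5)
This theorem is quoted from Knutson \cite{knutson2006lambda}; the paper supplies no proof of it, so there is no in-paper argument to compare yours against. Judged on its own, your sketch is the standard textbook proof of this classical fact, and its overall architecture (verification on line elements for the forward direction; Newton's identities plus torsion-freeness for the converse) is sound.

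Two imprecisions are worth flagging, neither fatal. First, the splitting principle as you state it --- an extension $R\hookrightarrow R'$ in which \emph{every} element of $R$ becomes a sum of line elements --- is not literally available (elements of negative or infinite ``dimension'' cannot be so written). The correct mechanism is the verification principle: each side of the identities (1)--(3) is a universal polynomial in the $\lambda^k$ of its arguments, so it suffices to verify the identity in the universal $\Lambda$-ring $\Sym^{\otimes k}$, whose generators $e_1 = \sum_i x_i$ genuinely are sums of line elements; your line-element computations then go through verbatim and the identity specializes to every $\Lambda$-ring. Note also that the forward direction needs no torsion-freeness at all, so the sentence about ``descent from $R'$ back to $R$'' is a red herring. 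Second, in the converse, $R$ is already assumed to be a pre-$\Lambda$-ring, so the operations $\lambda^i$ are given as maps $R\to R$ from the outset; the integrality problem you describe (Newton's identities producing elements only of $R\otimes\mathbb{Q}$) does not arise here --- it belongs to the related theorem of Wilkerson in which a $\Lambda$-structure is \emph{constructed} from Adams operations. The actual role of torsion-freeness is simply that $R\hookrightarrow R\otimes\mathbb{Q}$ is injective, so the axioms $\lambda^n(r_1r_2)=P_n(\cdot)$ and $\lambda^m(\lambda^n(r))=P_{m,n}(\cdot)$, once verified rationally via the triangular $(e,p)$ change of basis and hypotheses (2) and (3), descend to identities in $R$. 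With those two adjustments your outline is the correct classical argument.
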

Since (\ref{eq:adams}) defining the Adams operators is equivalent to (\ref{eq:pfrome}), this suggests that simple operations on the symmetric power sum functions should give a $\Lambda$-ring.  This is exactly the case for the ``free $\Lambda$-ring on one generator'', where we start with $\Lambda$ a polynomial ring in infinitely many variables $\mathbb{Z}[x_1, x_2,\cdots]$ and let (for any symmetric function $f\in\mathbb{Z}[x_1, x_2,\cdots] $) $$\lambda^i(f)=e_i[f].$$
The $[\cdot]$ on the right denotes plethysm.  One can make an equivalent statement using the Adams operators and the symmetric power sum $p_i$:
$$\Psi^i(f)=p_i[f].$$
This implies that for all $i\geq 0$, $f,g\in\mathbb{Z}[x_1, x_2,\cdots]$ (symmetric functions, although $f$ and $g$ can in fact be any integral sum of monomials)
\begin{enumerate}
\item$p_i[1]=1$,
\item $p_i[f+g]=p_i[f]+p_i[g]$ and 
\item $p_m[fg]=p_m[f]p_m[g]$.
\end{enumerate}  (Note that the first and third items follow directly from (\ref{it:1}) and (\ref{it:2})  in Theorem \ref{thm:adams}.  The second follows from the additive properties of a $\Lambda$-ring.)  This is the context in which plethysm is usually directly defined, such that for $f,g$ symmetric functions (and indeed with $f$ allowed much more generally to be a sum of monomials) one more generally calculates $g[f]$ by first expressing $g$ in terms of the power sums and then using the above rules, combined with the following (which allows one to define plethysm as a homomorphism on the power sums):
\begin{itemize}
\item For any constant $c\in \mathbb{Q}$ (or generalizing, for $c$ in an underlying field $K$), $c[f]=c$.
\item For $m\geq 1$, $g_1,g_2$ symmetric functions, $(g_1+g_2)[f]=g_1[f]+g_2[f]$.
\item For $m\geq 1$, $g_1,g_2$ symmetric functions, $(g_1g_2)[f]=g_1[f]g_2[f]$.
\end{itemize}
In this context, $$s_\lambda[s_\mu]=\sum_{\gamma}a_{\lambda,\mu}^\nu s_\nu$$ where the $a_{\lambda,\mu}^\nu$ correspond to the well-known Kronecker coefficients and $f[x_1+\cdots +x_n]$ is just $f(x_1,\cdots,x_n)$. See \cite{LeuRem2007Computational} for a fantastic exposition, starting from this point of view.

\subsection{Quasisymmetric and noncommutative symmetric plethysm}  While one can modify the definition slightly to allow evaluation of $g[f]$ when $f$ is not a symmetric function, the case is not so simple when $g$ is no longer symmetric.  Krob, Leclerc, and Thibon~\cite{KLT97Noncommutative} are the first to examine this case in detail; they begin by looking at $g$ in the noncommutative symmetric functions, but $g$ in the quasisymmetric functions can be defined from there by a natural duality.  (It is not so natural to try to find an analogue directly on the quasisymmetric function side, since one needs an analogue of the elementary symmetric functions to begin.)   A peculiarity of this case is that the order of the monomials in $f$ matters (as the evaluation of noncommutative symmetric functions or quasisymmetric functions depends on the order of the variables), and one can meaningfully define a different result depending on the choice of monomial ordering.   

As is suggested by the formalism of $\Lambda$-rings, Krob, Leclerc, and Thibon~\cite{KLT97Noncommutative} begin by essentially defining the natural analogue of a pre-$\Lambda$-ring on the  homogeneous noncommutative symmetric functions, and thus by extension on the elementary noncommutative symmetric functions  They do this in a way that guarantees the properties of a pre-$\Lambda$-ring as follows.
Use $A\oplus B$ for the addition of ordered noncommuting alphabets (with the convention that terms in $A$ preceed terms in $B$) and $H(X;t)$ for the generating function of the homogeneous noncommutative symmetric functions on the alphabet $X$.  Then define $H(A \oplus B; T)$ as follows.  $$H(A\oplus B;t)=\sum_{n\geq 0}\h_{n}[A\oplus B]t^n:= H(A;t)H(B;t).$$

Already, this is enough to show that plethysm on the noncommutative power sums (or by duality the quasisymmetric power sums) is not as nice. Using $\p(X;t)$ for the generating function of the noncommuting symmetric power sums, \cite{KLT97Noncommutative} show that
$$\p(A\oplus B;t)=H(B;t)^{-1}\p(A;t)H(B;t)+\p(B;t).$$  This is, of course, in contrast to simple easy to check symmetric function expansion  $$p[X+Y;t] =p[X;t]+p[Y;t],$$ for $p[X;t]$ the symmetric power sum generating function.
Moreover, they show that the case is equally complex for the type 2 case.
The takeaway from this computation is that one can define a $\Lambda$-ring or a pre-$\Lambda$-ring in the noncommutative symmetric functions and then define Adams operators by one of two relationships between the power sums and the elementary (or equivalently homogeneous) noncommutative symmetric functions, but the resulting relationships on the Adams operators, that is the analogue of  Theorem \ref{thm:adams}, can be far more complicated than working with plethysm directly using the elementary nonsymmetric functions or (by a dual definition) the monomials in the quasisymmetric functions.  A succinct resource to the latter is \cite{BKNMPT01overview}.  

In theory, one could work in reverse, defining an operation ``plethysm" on either the type 1 or the type 2 power sums and extending it as a homomorphism to the quasisymmetric or noncommutative symmetric functions.  In practice, besides the more complicated plethysm identities on the power sums, most of the natural relationships commonly used in (commuting) symmetric function calculations are naturally generalized by plethysm as defined in \cite{KLT97Noncommutative}.  (Here for example, the addition of alphabets corresponds to the coproduct, as in the symmetric function case.)  Therefore the perspective of \cite{KLT97Noncommutative} seems to result in a more natural analogue of plethysm than any choice of homomorphism on the quasisymmetric power sums.

\section{Future directions}
We suggest a couple of possible directions for future research.
\subsection{Murnaghan-Nakayama style rules}
The Murnaghan-Nakayama Rule provides a formula for the product of a power sum symmetric function indexed by a single positive integer and a Schur function expressed in terms of Schur functions.  This rule can be thought of as a combinatorial method for computing character values of the symmetric group.  Tewari~\cite{Tew16} extends this rule to the noncommutative symmetric functions by producing a rule for the product of a type 1 noncommutative power sum symmetric function and a noncommutative Schur function.  LoBue's formula for the product of a quasisymmetric Schur function and a power sum symmetric function indexed by a single positive integer~\cite{LoB15} can be thought of as a quasisymmetric analogue of the Murnaghan-Nakayama rule, although there are several alternative approaches worth exploring.

The quasisymmetric power sum analogues, unlike the power sum symmetric functions and the noncommutative power sum symmetric functions, are not multiplicative, meaning a rule for multiplying a quasisymmetric Schur function and a quasisymmetric power sum indexed by a single positive integer does not immediately result in a rule for the product of a quasisymmetric Schur function and an arbitrary quasisymmetric power sum.  It is therefore natural to seek a new rule for such a product.  

Also recall that there are several natural quasisymmetric analogues of the Schur functions; in addition to the quasisymmetric Schur functions, the fundamental quasisymmetric functions and the dual immaculate quasisymmetric functions can be thought of as Schur-like bases for $QSym$.  Therefore it is worth investigating a rule for the product of a quasisymmetric power sum and either a fundamental quasisymmetric function or a dual immaculate quasisymmetric function.

\subsection{Representation theoretic interpretations}  The symmetric power sums play an important role in connecting representation theory to symmetric function theory via the Frobenius characteristic map $\mathcal{F}$.   In particular, if $C_\lambda$ is a class function in the group algebra of $S_n$, one can define the Frobenius map by $\mathcal{F}(C_\lambda)=\frac{p_\lambda}{z_\lambda}$.  With this definition, one can show that $\mathcal{F}$ maps the irreducible representation of $S_n$ indexed by $\lambda$ to the schur function $s_\lambda$.

Krob and Thibon~\cite{krob1997noncommutative} define quasisymmetric and noncommutative symmetric characteristic maps; one takes irreducible representations of the $0$-Hecke algebra to the fundamental quasisymmetric basis, the other takes indecomposable representations of the same algebra to the ribbon basis.  It would be interesting to understand if these maps could be defined equivalently and usefully on the quasisymmetric and noncommutative symmetric power sums.
\begin{akn} 
We would like to express our gratitude to BIRS and the organizers of Algebraic Combinatorixx II for bringing the authors together and enabling the genesis of this project, and to the NSF for supporting further collaboration via the second author's grant (DMS-1162010).
\end{akn}

\bibliographystyle{amsplain}
\providecommand{\bysame}{\leavevmode\hbox to3em{\hrulefill}\thinspace}
\providecommand{\MR}{\relax\ifhmode\unskip\space\fi MR }
\providecommand{\MRhref}[2]{%
  \href{http://www.ams.org/mathscinet-getitem?mr=#1}{#2}
}
\providecommand{\href}[2]{#2}

\end{document}